\documentclass[a4paper]{scrartcl}

\usepackage[utf8]{inputenc}
\usepackage[T1]{fontenc}
\usepackage[english]{babel}
\usepackage{amsmath,amssymb,amsfonts,siunitx,commath}
\usepackage{amsthm}
\usepackage{tikz,url}
\usepackage{geometry}
\usepackage{mathtools}
\mathtoolsset{showonlyrefs}
\usepackage{subcaption}
\usepackage{algorithm}
\usepackage{algpseudocode}
\usepackage{hyperref}
\usepackage{enumerate}
\usepackage{listings}
\usepackage{esint}

\DeclareMathOperator*{\argmin}{arg\,min}

\newcommand{\R}{\mathbb{R}}

\newcommand{\Z}{\mathbb{Z}}

\newcommand{\Haus}{\mathcal{H}}

\renewcommand\d{\mathrm{d}}

\newcommand\tT{\mathrm{T}}

\newcommand{\ones}{\mathbf{1}}
\newcommand{\pp}{{\prec\!\prec}}
\newcommand{\Gr}{\textup{Gr}\,}
\newcommand{\mres}{\mathbin{\vrule height 1.6ex depth 0pt width 0.13ex\vrule height 0.13ex depth 0pt width 1.3ex}}

\newcommand{\wto}{\rightharpoonup}
\newcommand{\wtos}{\stackrel{*}{\rightharpoonup}}
\DeclareMathOperator{\Div}{div}
\newcommand{\Bx}{\mathbf{x}}
\newcommand{\By}{\mathbf{y}}

\theoremstyle{plain}
\newtheorem{lemma}{Lemma}
\newtheorem{theorem}[lemma]{Theorem}
\newtheorem{corollary}[lemma]{Corollary}
\newtheorem{proposition}[lemma]{Proposition}
\newtheorem{remark}[lemma]{Remark}
\newtheorem{example}[lemma]{Example}

\newtheorem{definition}[lemma]{Definition}
\begin{document}
\title{Limit Points of Reflow with Minibatch Optimal Transport}
\author{Antonin Chambolle\footnote{Universit\'e Paris Dauphine - PSL \& Inria, \texttt{chambolle@ceremade.dauphine.fr}}\and Johannes Hertrich\footnote{ENS Paris, \texttt{johannes.hertrich@ens.fr}}}
\maketitle

\begin{abstract}
Rectified flows, also called flow matching or stochastic interpolants, are generative models that learn a time-dependent vector field steering a probability curve between two probability distributions, usually referred to as latent and target distributions. Reflow accelerates inference by iteratively straightening the trajectories induced by this vector field. We study the asymptotic behavior of this iteration and characterize its limit points. First, we define weak rectified couplings which always exist. Next, when rectified flow updates are alternated with minibatch optimal transport steps of fixed batch size, we show that any limit is $N$-cyclically monotone, where $N$ is the batch size. 
Such $N$-cyclically monotone couplings enjoy favorable structural and stability properties such as rectifiability and straightness.
Finally, restricting velocities to gradient fields and assuming additional support conditions, we prove that reflow limits coincide with the optimal transport map between the endpoint distributions. 
\end{abstract}

\section{Introduction}

Generative modeling is the task of learning a process to sample from a target probability measure $\mu_1$ on $\R^d$, which is only given by data. A typical approach for doing so is to consider an easy-to-sample latent distribution $\mu_0$ and to learn a transport from $\mu_0$ to $\mu_1$. In practice, $\mu_0$ is often chosen as a standard Gaussian, but depending on the application other distributions can be used.
Examples for generative models include variational autoencoders \cite{KW2013}, normalizing flows \cite{rezende15} and score-based diffusion methods \cite{ho2020denoising,song2020score}.
\smallskip

In this paper, we consider rectified flows \cite{L2022,LCL2023}, which were introduced at the same time under the names flow matching \cite{LCBNL2023} and stochastic interpolants \cite{AV2021}. We refer also to \cite{ABV2023, lipman2024flow, pierret2026flow, wald2025flow} for some introduction and overview papers. 
Rectified flows start with some coupling $\gamma\in\Gamma(\mu_0,\mu_1)\coloneqq \{\gamma\in \mathcal P_2(\R^d\times \R^d):{\pi_x}_\#\gamma=\mu_0, {\pi_y}_\#\gamma=\mu_1\}$, where $\pi_x(x,y)=x$ and $\pi_y(x,y)=y$, and learn a velocity field by minimizing the loss function
\begin{equation}\label{eq:flow_matching_loss}
v_t\in\argmin_{w_t\in L^2(\mu_t)}\mathcal L(w_t|\gamma)\coloneqq\int_0^1\int_{\R^d\times\R^d}\|w_t((1-t) x+ t y)-y+x\|^2\d\gamma(x,y)\d t,
\end{equation}
where $\mu_t$ is the probability measure defined by 
\begin{equation}\label{eq:def_mu_t}
\int_{\R^d}\psi(x)\d\mu_t(x)=\int_{\R^d\times\R^d} \psi((1-t)x+ty)\d\gamma(x,y)\quad\text{for all}\quad \psi\in C_c^\infty(\R^d).
\end{equation}
Then, it can be shown that $\mu_t$ and the velocity field $v_t$ from \eqref{eq:flow_matching_loss} fulfill the continuity equation
\begin{equation}\label{eq:CE}
    \partial_t \mu_t + \Div(v_t\mu_t)=\delta_0\otimes \mu_0-\delta_1\otimes \mu_1
\end{equation}
in a weak sense, i.e., it holds for all $\psi\in C_c^\infty(\R\times\R^d)$ that
\begin{align}\label{eq:weak_CE}
\int_0^1\int_{\R^d}(\partial_t\psi(t,x))\d\mu_t(x)\d t+\int_0^1\int_{\R^d}\langle\nabla \psi(t,x),v_t(x)\rangle\d\mu_t(x)\d t\\
=\int_{\R^d}\psi(1,x)\d\mu_1(x)-\int_{\R^d}\psi(0,x)\d\mu_0(x).
\end{align}
If $v_t$ is regular enough, 
this allows sampling from $\mu_1$ by sampling $x_0$ from $\mu_0$ and computing the solution $\Phi_1(x_0)$ of the flow ODE
\begin{equation}\label{eq:flow_ode}
\partial_t \Phi_t(x)=v_t(\Phi_t(x)),\quad \Phi_0(x)=x.
\end{equation}
The authors of \cite{LCL2023} call a coupling \emph{rectifiable} if the solution of \eqref{eq:flow_ode} always exists and is unique. However, rectifiability is not directly clear from the coupling and can be violated even in simple examples, see \cite{HCD2025}.

In order to obtain straight velocity fields $v_t$, Liu et al.~\cite{L2022,LCL2023} proposed to iterate the rectified flow coupling, which is also known under the name ``reflow''.
The authors of \cite{RBSR2024} derive a sufficient condition under which reflow leads to straight couplings in the second step, but point out that this condition does not hold true in general. A similar result in a more general setting is derived in \cite{PSM2026}.

Reflow can be related to optimal transport by choosing velocity fields which admit a potential. Under certain conditions, \cite{L2022, WXLZ2026} showed that reflow leads to the optimal transport plan. However, this result requires strong regularity assumptions, which are impossible to check during training time and are not fulfilled in many common cases, see also \cite{HCD2025} for more details.
Another approach to relate rectified flows to optimal transport is minibatch OT as proposed by Pooladian et al.~\cite{PBDALC2023} and Tong et al.~\cite{TFMH2024}. This describes the techniques to reorder batches from the coupling $\gamma$ according to discrete optimal transport. While it is clear that minibatch OT does not directly lead to optimal transport, several papers studied the convergence if the batch size tends to infinity, see \cite{BDN2026,fukumizu2026flow,PBDALC2023}. While \cite{PBDALC2023} and \cite{fukumizu2026flow} focus on discrete settings, \cite{BDN2026} proves a general convergence result to optimal transport as the batch size tends to infinity.

\paragraph{Outline and Contributions.} In this paper we study the existence and convergence of rectified flows and the reflow procedure beyond the common regularity assumptions. We pay particular attention to the case where minibatch OT is applied in every iteration and show that this provides more tractable structures. The outline will be as follows:
\smallskip

In Section~\ref{sec:smirnov} we present a weak rectified coupling which always exists even when the coupling is not rectifiable in the sense that the flow ODE \eqref{eq:flow_ode} does not have a unique solution.
In the case that $\gamma$ is rectifiable, the weak rectified coupling is unique and coincides with the strong rectified coupling defined by the flow ODE.
In the other cases, the weak rectified coupling might not be unique, but we show that any weak solution admits properties which make it as useful as a strong rectified coupling.
In the context of the reflow procedure, we give an example that the loss function $\gamma \mapsto \inf_{w_t\in L^2(\mu_t)}\mathcal L(w_t|\gamma)$ is not lower semicontinuous in $\gamma$ (wrt weak convergence) such that it is not clear whether limit points are straight.
\smallskip

In Section~\ref{sec:minibatch} we draw our attention to minibatch OT and its application in the reflow procedure. To this end, we first formally define minibatch OT as an operator on the space of couplings and study its fixed points. In the context of reflow, we characterize limit points, where minibatch OT with fixed batch size is applied in every step. We find that these limit points are $N$-cyclically monotone, where $N$ is the chosen batch size and that this implies that the limit coupling is straight and rectifiable.
\smallskip

In Section~\ref{sec:gradient}, we consider reflow, where we apply both minibatch OT and the constraint from \cite{L2022} that velocities admit a potential. We consider the limit and fixed points of reflow and prove that they coincide with optimal transport under significantly weaker regularity assumptions than in \cite{L2022}. Under strong assumptions, we also prove convergence of the iterates to optimal transport.
\smallskip

Finally, we illustrate the influence of the batch size on the limit points numerically in Section~\ref{sec:numerics}. Further, conclusions, limitations and future work are discussed in Section~\ref{sec:conclusions}. Additional examples and some proofs are given in the appendix.

\section{Weak Rectified Couplings and Straight Limits}\label{sec:smirnov}

In \cite{L2022,LCL2023}, the authors define rectified couplings as follows.

\begin{definition}\label{def:strong_rec}
We call a coupling $\gamma$ with velocity field $v_t\in\argmin_{w_t\in L^2(\mu_t)} \mathcal L(w_t|\gamma)$ rectifiable if the flow ODE \eqref{eq:flow_ode}
has a unique solution for $\mu_0$-almost every $x$.
For a rectifiable coupling $\gamma$, we call the coupling $\tilde \gamma$ defined by
\begin{equation}\label{eq:strong_rectification}
\int \psi(x,y)\d\tilde \gamma(x,y)=\int \psi(x,\Phi_1(x)) \d \mu_0(x)\quad\text{for all}\quad \psi\in C_c^\infty(\R^d\times \R^d),
\end{equation}
the strong rectified coupling of $\gamma$.
\end{definition}
In the case that $\mu_0$ is Gaussian, \cite{RBSR2024} formulate sufficient conditions for the independent coupling to be rectifiable. A consideration for more general marginals $\mu_0$ is done in \cite{mena2025statistical}. However, without further regularity assumptions, not even the independent coupling is rectifiable in general.
In this section,
we consider a weak formulation of rectified couplings, which always exists even when the flow ODE does not admit
a unique solution. 
The weak rectified coupling relies on a representation of solutions of the continuity equation as a probability measure on the space of rectifiable paths. This representation is described in \cite[Section 8.2]{AGS2008} and can also be derived from Smirnov's decomposition theorem \cite{Smirnov93}.
We show how this decomposition can be used for defining weak rectified couplings in Subsection~\ref{subsec:weak_rec}. Afterwards, we show in Subsection~\ref{subsec:weak_reflow} that any limit point of reflow (with weak rectified coupling) is straight.

\subsection{Weak Rectified Couplings}\label{subsec:weak_rec}

We want to define a weaker version of Definition~\ref{def:strong_rec}, which always exists. To this end, we start with a solution $(\mu_t,v_t)$ of the continuity equation $\partial_t \mu_t + \Div(v_t\mu_t)=0$ (in the weak sense \eqref{eq:weak_CE}) with $\int_0^1\|v_t\|^2_{L^2(\mu_t)}\d t<\infty$. In our application, $\mu_t$ will be the interpolation \eqref{eq:def_mu_t} and $v_t$ the minimizer of the flow matching loss \eqref{eq:flow_matching_loss}.

Even when the flow ODE \eqref{eq:flow_ode} does not admit a unique solution, it is proven in~\cite[Thm.~8.2.1]{AGS2008} (see also
eq.~(8.2.8)) that there is a probability measure $\Lambda$, defined
on integral curves $\lambda:[0,1]\to\R^d$ such that it holds
\begin{align}
    \dot{\lambda}(t) &= v_t(\lambda(t))\quad\text{for a.e.~$t$ and $\Lambda$-a.e.~$\lambda$, and,}\label{eq:Lambda1}\\
    \int_{\R^d} \psi(x)\d\mu_t &= \int \psi(\lambda(t)) \d\Lambda(\lambda)\quad\text{for any $\psi\in C_c(\R^d)$}.\label{eq:Lambda2}
\end{align}
If the flow ODE \eqref{eq:flow_ode} has a unique solution $\Phi_t$, these paths $\lambda$ coincide with the trajectories of the flow ODE in the sense that $\Lambda=f_\#\mu_0$, where $f$ maps a point $x\in\R^d$ to the curve $t\mapsto\Phi_t(x)$.
As observed for instance in~\cite{StepanovTrevisan}, this decomposition
can also be deduced as a
consequence of Smirnov's theorem~\cite[Thm.~C]{Smirnov93}.

With this measure $\Lambda$ at hand, we can, in analogy to Definition~\ref{def:strong_rec}, introduce a weak rectified coupling that is determined by the initial and terminal points of the lines $\lambda$ in the support of $\Lambda$.
We emphasize that the measure $\Lambda$ from \cite[Thm.~8.2.1]{AGS2008} is not unique, so that weak rectified couplings are not unique either.

\begin{definition}[Weak Rectified Couplings]\label{def:weak_rec} 
Let $\gamma\in\Gamma(\mu_0,\mu_1)$ and $\mu_t$ be defined by \eqref{eq:def_mu_t} and denote by $v_t=\argmin_{w_t\in L^2(\mu_t)}\mathcal L(w_t|\gamma)$ the solutions of the flow matching loss \eqref{eq:flow_matching_loss}.
Then, we define the set of weak rectified couplings $\mathcal R(\gamma)$ as the set of all transport plans $\tilde \gamma$ given by
\begin{equation}\label{eq:gamma_tilde}
\int_{\R^d\times \R^d} \psi(x,y)  \d\tilde\gamma=  \int \psi(\lambda(0),\lambda(1)) \d\Lambda(\lambda),\quad\text{for all }\psi\in C_c^\infty(\R^d\times\R^d),
\end{equation}
where $\Lambda$ fulfills \eqref{eq:Lambda1} and \eqref{eq:Lambda2} with respect to $v_t$.
\end{definition}

We provide an explicit example for the non-uniqueness of the weak rectified coupling in Appendix~\ref{app:non_uniqueness_rect}.
Next, we collect some properties of weak rectified couplings in the following corollary. The proof follows directly from the definition and from \cite[Thm 8.2.1]{AGS2008}.

\begin{corollary}[Properties of Weak Rectified Couplings]\label{cor:properties_weak_rec}
Let $\gamma\in\Gamma(\mu_0,\mu_1)$ and let $\tilde \gamma\in\mathcal R(\gamma)$. Then the following holds true.
\begin{enumerate}
    \item[(i)] $\mathcal R(\gamma)$ is a non-empty subset of $\Gamma(\mu_0,\mu_1)$.
    \item[(ii)] If the flow ODE \eqref{eq:flow_ode} has a unique solution, then $\mathcal R(\gamma)$ is single-valued and its element coincides with the strong rectified coupling from Definition~\ref{def:strong_rec}.
    \item[(iii)] $\int \|x-y\|^2\d \tilde \gamma(x,y)\leq \int_0^1\int \|v_t(x)\|^2\d\mu_t(x)\d t\leq \int\|x-y\|^2\d\gamma(x,y)$.
    \item[(iv)] It holds $\int \|x-y\|^2\d \tilde \gamma(x,y)= \int_0^1\int \|v_t(x)\|^2\d\mu_t(x)\d t$ if and only if $\Lambda$-a.e. $\lambda$ is a straight line (where $\Lambda$ generates $\tilde \gamma$ by \eqref{eq:gamma_tilde}).
\end{enumerate}
\end{corollary}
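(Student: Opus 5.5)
We treat the four items in order. Each one reduces to the superposition representation \eqref{eq:Lambda1}--\eqref{eq:Lambda2} from \cite[Thm.~8.2.1]{AGS2008}, together with the standard characterization of the flow matching minimizer.

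\textbf{Step 1: the minimizer and its energy.} Disintegrate $\gamma$ along the interpolation map $(x,y)\mapsto (1-t)x+ty$. The minimizer of \eqref{eq:flow_matching_loss} is then the conditional expectation
\[
v_t(z)=\mathbb E\bigl[\,y-x \,\big|\, (1-t)x+ty=z\,\bigr],
\]
taken $\mu_t$-a.e. Conditional Jensen gives $\int\|v_t\|^2\d\mu_t\le\int\|x-y\|^2\d\gamma$ for every $t$, and integrating over $t$ yields the right inequality in (iii). Testing \eqref{eq:weak_CE} also shows that $(\mu_t,v_t)$ solves the continuity equation with finite energy. Hence \cite[Thm.~8.2.1]{AGS2008} supplies at least one $\Lambda$.

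\textbf{Step 2: items (i) and (ii).} For (i), any admissible $\Lambda$ defines a $\tilde\gamma$ through \eqref{eq:gamma_tilde}, so $\mathcal R(\gamma)\neq\emptyset$. Applying \eqref{eq:Lambda2} at $t=0$ and $t=1$ shows that its marginals are $\mu_0$ and $\mu_1$. Its second moment is finite because $\mu_0,\mu_1\in\mathcal P_2$, so $\tilde\gamma\in\Gamma(\mu_0,\mu_1)$.

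For (ii), \eqref{eq:Lambda1} says that $\Lambda$-a.e. curve is an integral curve of $v$ starting at $\lambda(0)$, and \eqref{eq:Lambda2} says that $\lambda(0)\sim\mu_0$. Disintegrate $\Lambda$ with respect to $\lambda(0)$. By uniqueness of ODE solutions, for $\mu_0$-a.e. $x$ the conditional measure is concentrated on the single curve $t\mapsto\Phi_t(x)$. Therefore $\Lambda=f_\#\mu_0$, and $\tilde\gamma$ equals the coupling in \eqref{eq:strong_rectification}.

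A technical point here is the meaning of ``unique solution''. The ODE is understood in the absolutely continuous / Carathéodory sense, which is the same class of curves that \eqref{eq:Lambda1} refers to.

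\textbf{Step 3: items (iii) and (iv).} For $\Lambda$-a.e. $\lambda$, Jensen (equivalently Cauchy--Schwarz) gives
\[
\|\lambda(1)-\lambda(0)\|^2=\Bigl\|\int_0^1\dot\lambda(t)\d t\Bigr\|^2\le\int_0^1\|\dot\lambda(t)\|^2\d t,
\]
with equality if and only if $\dot\lambda$ is a.e. constant, that is, $\lambda$ is a straight line traversed at constant speed.

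Integrate against $\Lambda$ and use \eqref{eq:Lambda1} together with \eqref{eq:Lambda2} (extended to nonnegative measurable integrands by monotone convergence) and Fubini:
\[
\iint_0^1\|\dot\lambda(t)\|^2\d t\d\Lambda=\int_0^1\!\!\int\|v_t(\lambda(t))\|^2\d\Lambda\d t=\int_0^1\!\!\int\|v_t\|^2\d\mu_t\d t.
\]
This gives the left inequality in (iii). Since this quantity is finite, equality in the integrated inequality holds exactly when the pointwise inequality is an equality $\Lambda$-a.e., which is (iv).

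\textbf{Main obstacle.} The main subtlety is in Step 3: composing $v_t$ with $\lambda(t)$ must be justified when $v_t$ is only defined $\mu_t$-a.e. Because \eqref{eq:Lambda2} implies that the law of $\lambda(t)$ is $\mu_t$, changing $v_t$ on a $\mu_t$-null set affects only a $\Lambda\otimes\d t$-null set of pairs $(\lambda,t)$. Joint measurability in $(t,\lambda)$ follows from a Borel version of $(t,z)\mapsto v_t(z)$.
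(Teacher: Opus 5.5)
Your proof is correct and follows the paper's route. You use the superposition measure $\Lambda$ from \cite[Thm.~8.2.1]{AGS2008} for (i)--(ii), and the pathwise inequality $\|\lambda(1)-\lambda(0)\|^2\le\int_0^1\|\dot\lambda(t)\|^2\d t$ integrated against $\Lambda$, with its equality case, for (iii)--(iv). Beyond the paper's proof, you also make explicit the right-hand inequality in (iii) via conditional Jensen, the disintegration argument for (ii), and the null-set/measurability bookkeeping; the paper leaves these implicit.
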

\begin{proof}
Part (i) follows directly from \cite[Thm 8.2.1]{AGS2008} and \eqref{eq:Lambda2}. Further, by \eqref{eq:Lambda1}, $\Lambda$-a.e. curve $\lambda$ solves (a.e.) the flow ODE \eqref{eq:flow_ode}. Hence, if the flow ODE has a unique solution, then any weak rectified coupling coincides with the strong rectified coupling. For part (iii) and (iv) one derives from \eqref{eq:Lambda1} and \eqref{eq:Lambda2} that
\begin{multline*}
\int_0^1\int_{\R^d} \|v_t(x)\|^2\d\mu_t\d t = 
\int_0^1 \int \|v_t(\lambda(t))\|^2 \d\Lambda(\lambda) \d t
\\= \int \int_0^1 \|\dot{\lambda}(t)\|^2 \d t\d\Lambda(\lambda)
\ge \int \|\lambda(1)-\lambda(0)\|^2 \d\Lambda(\lambda).
\end{multline*}
In addition, equality holds in the last inequality if and only if
for $\Lambda$-a.e.~curve $\lambda$, $\dot{\lambda}(t) = \lambda(1)-\lambda(0)$
for a.e.~$t\in [0,1]$ (so that $\lambda(t) = (1-t)\lambda(0)+t\lambda(1)$).  
\end{proof}
We note that similar to the strong rectified coupling considered in \cite{L2022,LCL2023}, we get by the same proof that part (iii) from the corollary holds for any convex cost function $c\colon\R^d\to\R\cup\{\infty\}$ instead of $\|\cdot\|^2$. The equality condition (iv) holds true for any strictly convex cost function $c\colon\R^d\to\R\cup\{\infty\}$ instead of $\|\cdot\|^2$.

\subsection{Reflow and Straight Couplings}\label{subsec:weak_reflow}

In order to reduce the computational cost of evaluating rectified flows, we are interested in  couplings for which the trajectories of the flow ODE correspond to straight lines. If a coupling is straight and rectifiable, the flow ODE \eqref{eq:flow_ode} of such straight couplings can be represented as $\Phi_t(x)=x+tv_0(x)$, which requires only a single evaluation of the velocity field and is therefore computationally efficient. Conceptually, the computation of straight couplings is closely related to distillation methods \cite{luhman2021knowledge, meng2023distillation, salimans2022progressive} of diffusion or flow models.
Following \cite{L2022,LCL2023}, we use the following formal definition for straight couplings.
\begin{definition}
We call a coupling $\gamma\in\Gamma(\mu_0,\mu_1)$ a straight coupling if the velocity $v_t\in\argmin_{w_t\in L^2(\mu_{t})} \mathcal L(w_t|\gamma)$ fulfills $\mathcal L(v_t|\gamma)=0$, or equivalently $v_t((1-t)x+ty)=y-x$ for $\d t\otimes\gamma$-almost every $(t,x,y)$.
\end{definition}
If $\gamma$ is additionally rectifiable, then this definition is equivalent to assuming that the flow ODE can be represented as $\Phi_t(x)=x+tv_0(x)$.
\smallskip

In order to find straight couplings, \cite{L2022,LCL2023} propose the reflow algorithm, which starts at an arbitrary initial coupling $\gamma_0\in\Gamma(\mu_0,\mu_1)$ and then iteratively computes a sequence of couplings $\gamma_{n+1}\in\mathcal R(\gamma_n)$. They show that the loss function $\min_{v_t\in L^2(\mu_{t,n})} \mathcal L(v_t|\gamma_n)$ converges to zero. However, we point out two problems, which show that this does not necessarily lead to a one-step generation model in the limit.
\smallskip

First, limit points of reflow are not necessarily rectifiable. An example is given by the coupling defined by $\int\psi(x,y)\d\gamma(x,y)=\int\psi(x,-x)\d\mathcal N(0,I)(x)$ which was considered in \cite[Section 4.2]{HCD2025} as an example for non-rectifiable couplings, see also Appendix~\ref{app:non_uniqueness_rect}. This coupling fulfills $\gamma\in\mathcal R(\gamma)$ and is consequently also a limit point of reflow.\smallskip

Second, the loss function $\min_{v_t\in L^2(\mu_{t})} \mathcal L(v_t|\gamma)$ is not lower semicontinuous. In particular, we cannot conclude that limit points of the sequence $(\gamma_n)_n$ in the reflow algorithm are really straight couplings. Indeed, we include in Appendix~\ref{app:not_lsc} an explicit example showing that not even limits of straight couplings are necessarily straight. However, the sequence in this example is not generated by the reflow algorithm, and it remains open whether a similar example can be constructed along reflow sequences.

\begin{remark}
Straight line flows were also studied in the papers \cite{RBSR2024,PSM2026}. In particular, these papers hint that in some cases the iterates $\gamma_n$ might arrive at the limit already after two (or finitely many) steps. Specifically, the authors of \cite{RBSR2024} study more closely the case of the independent initialization $\gamma_0=\mu_0\otimes\mu_1$ with $\mu_0=\mathcal N(0,I)$. Then, they conjecture that already $\gamma_1\in\mathcal R(\gamma_0)$ is a straight coupling in the sense that $\min_{v_t}\mathcal L(v_t|\gamma_1)=0$. However, they only prove this result for the case where $\gamma_1$ is supported on the graph of a function $f$ with $\nabla f(x)+\nabla f(x)^\tT\geq 0$ for all $x$ 
(that is, a $2$-monotone graph in the sense introduced below) and experimentally report that this assumption is sometimes violated in practice. Indeed, \cite{PHHD2026} verifies that this assumption is in general not true.
A similar monotonicity assumption on straightness is considered in \cite{PSM2026}, where also more general interpolation paths than just $X_t=(1-t)X_0+tX_1$ are considered.
\end{remark}

\section{Reflow with Minibatch OT}\label{sec:minibatch}

In order to achieve couplings closer to optimal transport, several papers proposed to combine rectified flows
with minibatch OT \cite{PBDALC2023,TFMH2024}, which consists
in a minibatch based stochastic gradient descent for~\eqref{eq:flow_matching_loss} where the batches $(x^{(i)},y^{(i)})_{i=1}^N$ are, in a preliminary step, reordered in order to minimize their Wasserstein cost.
In this section, we view minibatch OT as an operator in the space of couplings. We study its fixed points and establish convergence results of reflow if minibatch OT is applied in each step.

\subsection{Minibatch OT as Fixed Point Iteration}

Let $\gamma\in\Gamma(\mu_0,\mu_1)$ be a coupling and let $N\in\Z_{>1}$.
Moreover, we denote by $S=(S_1,...,S_N)\colon(\R^d\times\R^d)^N\to(\R^d\times\R^d)^N$ a mapping computing a discrete
optimal coupling with respect to the quadratic cost. That is, $S$ maps an input $(\Bx,\By)\coloneqq (x^{(i)},y^{(i)})_{i=1}^N$ to $(x^{(i)},y^{(\sigma(i))})_{i=1}^N$ such that 
\begin{equation}\label{eq:opt_permutation}
\sigma\in\argmin_{\tau\in\Sigma_N}\sum_{i=1}^N \|x^{(i)}-y^{(\tau(i))}\|^2
\end{equation}
where $\Sigma_N$ is the group of all permutations of length $N$. 
Since the optimal $\sigma$ is not always unique, there exist several mappings $S$ fulfilling \eqref{eq:opt_permutation}. We choose an arbitrary (but fixed) one among them; the most natural is maybe to consider the one which ``moves the points less'', that is, which minimizes $\sum_i \|y^{(i)}-y^{(\tau(i))}\|^2$ among all permutations $\tau$ minimizing~\eqref{eq:opt_permutation}.
Now, we define the minibatch OT as the coupling given by
\begin{equation}
\mathcal F_N(\gamma)\coloneqq\frac1N\sum_{i=1}^N {S_i}_\#\gamma^{\otimes N},
\label{eq:shuffleplan}
\end{equation}
where $\gamma^{\otimes N}=\gamma\otimes\cdots\otimes\gamma$.

We will now investigate the fixed point set of $\mathcal F_N$. To this end, we consider the following
definition, which is a weaker version of $c$-cyclical monotonicity in optimal transport, cf.~\cite[Def 5.1]{Villani2008}.

\begin{definition}
We call a set $A\subset\R^d\times\R^d$ $N$-cyclically monotone for $N\geq 2$ if it holds for all $(x^{(i)},y^{(i)})\in A$ and
$\tau\in\Sigma_N$ that
$$
\sum_{i=1}^N \|x^{(i)}-y^{(i)}\|^2\leq \sum_{i=1}^N \|x^{(i)}-y^{(\tau(i))}\|^2.
$$
Similarly, we call a transport plan $\gamma\in \mathcal P_2(\R^d\times\R^d)$ $N$-cyclically monotone
if $\mathrm{supp}(\gamma)$ is $N$-cyclically monotone.
\end{definition}

An equivalent condition is that $A$ is $N$-cyclically monotone if and only if it holds for all $(x^{(i)},y^{(i)})\in A$ that
$
\sum_{i=1}^N\langle x^{(i)},y^{(i)}-y^{(i+1)}\rangle\geq0.
$
By definition, $N$-cyclical monotonicity implies $M$-cyclical monotonicity for all $2\leq M<N$.
Further, by \cite[Thm 5.9, Rem 5.10]{Villani2008} a coupling is optimal if and only if it is $N$-cyclically monotone \emph{for all $N\in\Z_{>1}$}.
However, there are simple counterexamples that $N$-cyclical monotonicity \emph{for one specific $N$} does not imply that
the corresponding transport map is optimal. In two dimensions, such examples can be constructed by certain rotations as shown in the following example from \cite{CD2014,CG2003}.
\begin{example}
    Let $\mu_0=\mu_1=\mathcal N(0,\mathrm{Id})$ in $d=2$ dimensions. Then, we define 
    $$
    \gamma_N=(\mathrm{Id},u)_\#\mu_0,\quad\text{for}\quad u(x)=Ax\quad\text{with}\quad A=\left(\begin{array}{cc}\cos\big(\frac{\pi}{N}\big)&-\sin\big(\frac{\pi}{N}\big)\\
    \sin\big(\frac{\pi}{N}\big)&\cos\big(\frac{\pi}{N}\big)
    \end{array}\right).
    $$
    Then, it was shown in \cite{CG2003} that $\gamma_N$ is $N$-cyclically monotone but not $N+1$-cyclically monotone.
\end{example}

The next proposition derives some basic properties of the mapping $\mathcal F_N$ representing the minibatch OT.

\begin{proposition}\label{prop:minibatchOT}
Let $\gamma\in \Gamma(\mu_0,\mu_1)$ and let $\gamma^{(N)}=\mathcal F_N(\gamma)$.
Then, the minibatch OT has the following properties.
\begin{itemize}
\item[(i)] \textbf{Marginal Preservation:} It holds that $\gamma^{(N)}\in\Gamma(\mu_0,\mu_1)$.
\item[(ii)] \textbf{Reduced Transport Cost:} It holds $\int\|x-y\|^2\d\gamma^{(N)}(x,y)\leq \int\|x-y\|^2\d\gamma(x,y)$.
\item[(iii)] \textbf{Equality:} We have $\int\|x-y\|^2\d\gamma^{(N)}(x,y)= \int\|x-y\|^2\d\gamma(x,y)$ if and only if $\gamma$ is $N$-cyclically monotone.
\item[(iv)] \textbf{Continuous Cost:} The mapping $(\mathcal P_2(\R^d\times\R^d),W_2)\to(\R,|\cdot|)$ defined by $\gamma\mapsto \int\|x-y\|^2\d\mathcal F_N(\gamma)(x,y)$ is continuous.
\end{itemize}
\end{proposition}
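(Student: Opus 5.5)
The plan is to reduce every statement to the single identity
\begin{equation}
\int\|x-y\|^2\d\gamma^{(N)}=\frac1N\int \sum_{i=1}^N \|x^{(i)}-y^{(\sigma(i))}\|^2\d\gamma^{\otimes N}(\Bx,\By)=\frac1N\int \min_{\tau\in\Sigma_N}\sum_{i=1}^N \|x^{(i)}-y^{(\tau(i))}\|^2\d\gamma^{\otimes N},
\end{equation}
which follows from the definition \eqref{eq:shuffleplan} and the optimality \eqref{eq:opt_permutation} of $\sigma$. (One should note that $S$ can be chosen Borel measurable, e.g.\ by a fixed tie-breaking order on $\Sigma_N$, since each set where a given $\tau$ is optimal is closed.)

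For (i), ${\pi_x}_\#{S_i}_\#\gamma^{\otimes N}$ is the law of $x^{(i)}$, which is $\mu_0$. For the second marginal, $\sum_i {\pi_y}_\#{S_i}_\#\gamma^{\otimes N}$ is the law of the multiset $\{y^{(\sigma(i))}\}_i=\{y^{(i)}\}_i$, because $\sigma$ is a permutation. Hence averaging over $i$ gives $\mu_1$. For (ii), bound the minimum in the identity by the value at $\tau=\mathrm{id}$. By independence of the coordinates this integrates to $\int\|x-y\|^2\d\gamma$.

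For (iii), equality in (ii) holds if and only if $\min_\tau\sum_i\|x^{(i)}-y^{(\tau(i))}\|^2=\sum_i\|x^{(i)}-y^{(i)}\|^2$ for $\gamma^{\otimes N}$-a.e.\ $(\Bx,\By)$, since the integrand of the difference is nonnegative.
\begin{itemize}
\item If $\gamma$ is $N$-cyclically monotone, this holds on $\mathrm{supp}(\gamma)^N$, which has full measure.
\item Conversely, define $g(\Bx,\By)=\sum_i\|x^{(i)}-y^{(i)}\|^2-\min_\tau\sum_i\|x^{(i)}-y^{(\tau(i))}\|^2$. This function is continuous and nonnegative. If $g>0$ at some point of $\mathrm{supp}(\gamma)^N=\mathrm{supp}(\gamma^{\otimes N})$, then $g>0$ on a neighborhood of that point, and this neighborhood has positive $\gamma^{\otimes N}$-measure, a contradiction. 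So $g\equiv0$ on $\mathrm{supp}(\gamma)^N$, which is exactly $N$-cyclical monotonicity.
\end{itemize}

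For (iv), which I expect to be the main (though mild) obstacle, use the identity: the cost equals $\frac1N\int h\,\d\gamma^{\otimes N}$, where $h(\Bx,\By)=\min_\tau\sum_i\|x^{(i)}-y^{(\tau(i))}\|^2$. The function $h$ is continuous and satisfies $0\le h\le 2\sum_i(\|x^{(i)}\|^2+\|y^{(i)}\|^2)$, so it has quadratic growth. The argument then runs in two steps.
\begin{itemize}
\item If $\gamma_n\to\gamma$ in $W_2$, then $\gamma_n^{\otimes N}\to\gamma^{\otimes N}$ in $W_2$ on $(\R^d\times\R^d)^N$. Indeed, the product of optimal couplings is a coupling of the products with cost $N\,W_2^2(\gamma_n,\gamma)$.
\item $W_2$ convergence is equivalent to weak convergence together with convergence of second moments. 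It therefore implies convergence of integrals of continuous functions of at most quadratic growth (uniform integrability). This gives continuity.
\end{itemize}
Notably, $S$ itself may be discontinuous, which is why the argument works with the cost through the continuous function $h$ rather than with the plan $\mathcal F_N(\gamma)$.
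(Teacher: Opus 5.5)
Your proposal is correct and follows essentially the same route as the paper: the optimality identity for $\sigma$, the comparison with $\tau=\mathrm{id}$ for (ii), the $\gamma^{\otimes N}$-a.e.\ equality characterization for (iii), and, for (iv), continuity of $\eta\mapsto\frac1N\int h\,\mathrm{d}\eta$ composed with $\gamma\mapsto\gamma^{\otimes N}$. You also make explicit several steps the paper leaves implicit or only cites, namely the passage from a.e.\ equality to all of $\mathrm{supp}(\gamma)^N$ via continuity of $g$, the Borel choice of $S$, the $W_2$-continuity of the tensor power via product couplings, and the quadratic-growth characterization of $W_2$ convergence (where the paper points to Villani's Lemma 4.3 and Corollary 6.9).
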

\begin{proof}
\begin{itemize}
\item[(i)] 
Let $\pi_x(x,y)=x$ and $\pi_y(x,y)=y$. Then, we have by definition that $\pi_x\circ S_i((x^{(i)},y^{(i)})_{i=1}^N)=x^{(i)}$, so that $(\pi_x\circ S_i)_\#\gamma^{\otimes N}={\pi_x}_\#\gamma=\mu_0$. In particular, we have that 
$$
{\pi_x}_\#\mathcal F_N(\gamma)=\frac1N\sum_{i=1}^N(\pi_x\circ S_i)_\#\gamma^{\otimes N}=\frac1N\sum_{i=1}^N\mu_0=\mu_0.
$$
For the other marginal $\mu_1^{(N)}$ of $\gamma^{(N)}$, note that for any $\psi\in C_c^\infty(\R^d)$ it holds
\begin{align}
\int \psi(y)\d\mu_1^{(N)}(y)&=\int \psi(y)\d\gamma^{(N)}(x,y)=\frac1N\sum_{i=1}^N\int \psi(y)\d {S_i}_\#\gamma^{\otimes N}(x,y)\\
&= \frac1N\sum_{i=1}^N\int \psi\left(\pi_y\circ S_i(\Bx,\By)\right)\d \gamma^{\otimes N}(\Bx,\By).
\end{align}
Since by definition $\pi_y\circ S_i(\Bx,\By)$ is a permutation of $y^{(i)}$ this is equal to
$$
 \frac1N\sum_{i=1}^N\int \psi(y^{(i)})\d \gamma^{\otimes N}(\Bx,\By)=\int \psi(y)\d\gamma(x,y)=\int \psi(y)\d\mu_1(y).
$$
Together, the two previous formulas imply that $\mu_1^{(N)}=\mu_1$.
\item[(ii)] Denote $\gamma^{(N)}=\mathcal F_N(\gamma)$. Then, it holds that
\begin{align}
\int \|x-y\|^2\d \gamma(x,y)&=\frac1N \int \sum_{i=1}^N \|x^{(i)}-y^{(i)}\|^2 \d \gamma^{\otimes N}(\Bx,\By)\\
&\geq\frac1N \int\min_{\tau\in\Sigma_N}\sum_{i=1}^N \|x^{(i)}-y^{(\tau(i))}\|^2 \d \gamma^{\otimes N}(\Bx,\By)\\
&=\frac1N \sum_{i=1}^N\int \|x-y\|^2 \d {S_i}_\#\gamma^{\otimes N}(x,y)\\
&=\int \|x-y\|^2 \d \gamma^{(N)}(x,y).
\end{align}
\item[(iii)] In (ii) equality holds if and only if it holds
\[
\sum_{i=1}^N \|x^{(i)}-y^{(i)}\|^2\leq \sum_{i=1}^N \|x^{(i)}-y^{(\tau(i))}\|^2
\]
for every $\tau\in\Sigma_N$ for $\gamma^{\otimes N}$-almost every $(\Bx,\By)$.
\item[(iv)] Note that the mapping $(\Bx,\By)\mapsto \min_{\tau\in\Sigma_N}\frac1N\sum_{i=1}^N\|x^{(i)}-y^{(\tau(i))}\|^2$
is continuous (as the minimum over finitely many continuous functions). Thus the functional
\begin{equation}\label{eq:funny_cost}
\eta\mapsto \frac1N\int\min_{\tau\in\Sigma_N}\sum_{i=1}^N\|x^{(i)}-y^{(\tau(i))}\|^2\d\eta(\Bx,\By)
\end{equation}
for $\eta\in\mathcal P_2((\R^d\times\R^d)^N)$
is the objective value of an optimal transport problem with $2N$ marginals and continuous cost function.
Following \cite[Lem 4.3]{Villani2008} \eqref{eq:funny_cost} is lsc and similarly to \cite[Cor 6.9]{Villani2008} we derive that \eqref{eq:funny_cost} is continuous.
Now, $\gamma\mapsto \int\|x-y\|^2\d\mathcal F_N(\gamma)(x,y)$ is the composition of the continuous mappings \eqref{eq:funny_cost} and $\gamma\mapsto\gamma^{\otimes N}$
and therefore continuous.
\end{itemize}
\end{proof}

The coupling $\mathcal F_N(\gamma)$ might depend on the choice of the discrete OT solution map $S$ on the set, where the discrete OT has more than one solution. Indeed, the following example shows that $\mathcal F_N(\gamma)$ can lead to different couplings for different choices of $S$.

\begin{example}
We consider the coupling
$$
\gamma=(\mathrm{Id},u)_\#\mu_0,\quad\text{for}\quad u(x)=Ax\quad\text{with}\quad A=\left(\begin{array}{cc}0&-1\\
1&0
\end{array}\right)
$$
for $\mu_0=\mathcal N(0,\mathrm{Id})$ in $d=2$ dimensions.
Then, it holds for $\gamma^{\otimes 2}$-almost every $(x_1,y_1)$, $ (x_2,y_2)$ that $y_1=Ax_1$ and $y_2=Ax_2$. In particular, we have
$\langle x_1-x_2,y_1-y_2\rangle=\langle x_1-x_2,A(x_1-x_2)\rangle=0$, so that:
$$
\|x_1-y_1\|^2+\|x_2-y_2\|^2=\|x_1-y_2\|^2+\|x_2-y_1\|^2.
$$
In particular, the mappings $S^{(1)}=\mathrm{Id}$ and $S^{(2)}$ defined by
$$
S^{(2)}((x_1,y_1),(x_2,y_2))=((x_1,y_2),(x_2,y_1))
$$
are both solutions of the discrete OT on $\mathrm{supp}(\gamma)=\Gr u$. However, if we define the minibatch OT with $S=S^{(1)}$, we obtain that $\mathcal F_N(\gamma)=\gamma$, while for $S=S^{(2)}$, we obtain that $\mathcal F_N(\gamma)=\mu_0\otimes\mu_1$.
\end{example}

\subsection{Properties of $N$-cyclically Monotone Plans}

Proposition~\ref{prop:minibatchOT} tells us that the minibatch OT operator $\mathcal F_N$ always reduces the transport cost unless the input plan is $N$-cyclically monotone. Therefore, in order to study limit points of minibatch OT, we first collect some properties of $N$-cyclically monotone couplings. In particular, we show that $N$-cyclically monotone couplings already have very useful properties in the context of generative modeling, like straightness and rectifiability. 

\begin{proposition}\label{prop:N-monotone}
Let $\gamma\in\Gamma(\mu_0,\mu_1)$ be $N$-cyclically monotone and denote the corresponding velocities by $v_t=\argmin_{w_t\in L^2(\mu_t)}\mathcal L(w_t|\gamma)$. Then, the following holds:
\begin{enumerate}[(i)]
    \item $\mathcal L(v_t|\gamma)=0$.
    \item $\gamma$ is its unique rectified coupling, i.e., $\mathcal R(\gamma)=\{\gamma\}$.
    \item If $\mu_0$ is absolutely continuous, then $\gamma$ is rectifiable and there exists a monotone map $u$ such that $\gamma=(\mathrm{Id},u)_\#\mu_0$.
\end{enumerate}
\end{proposition}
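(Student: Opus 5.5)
The plan is to reduce everything to a geometric non-crossing property of the straight interpolation segments. Since $N$-cyclical monotonicity implies $2$-cyclical monotonicity, every pair $(x_1,y_1),(x_2,y_2)\in\mathrm{supp}(\gamma)$ satisfies $\langle x_1-x_2,y_1-y_2\rangle\ge 0$. Write $\Delta x=x_1-x_2$, $\Delta y=y_1-y_2$ and $\Delta z=(1-t)\Delta x+t\Delta y$. Expanding the square and dropping the nonnegative cross term gives
\[
\|\Delta z\|^2\ \ge\ (1-t)^2\|\Delta x\|^2+t^2\|\Delta y\|^2 .
\]
Consequently, for every $t\in(0,1)$ the map $I_t(x,y)=(1-t)x+ty$ is injective on $\mathrm{supp}(\gamma)$. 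Its inverse, defined on $I_t(\mathrm{supp}\,\gamma)\supseteq\mathrm{supp}\,\mu_t$ up to closure, is Lipschitz: the $x$-component has constant $1/(1-t)$ and the $y$-component has constant $1/t$. Being Lipschitz, the inverse extends to the closure and is in particular Borel.

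For (i), I define $v_t(z)\coloneqq y-x$ for the unique $(x,y)\in\mathrm{supp}\,\gamma$ with $I_t(x,y)=z$. This is a Borel (indeed Lipschitz) function on $\mathrm{supp}\,\mu_t$ with Lipschitz constant at most $1/t+1/(1-t)$. It lies in $L^2(\mu_t)$ because $\gamma\in\mathcal P_2$. It satisfies $v_t((1-t)x+ty)=y-x$ for $\gamma$-a.e. $(x,y)$, so $\mathcal L(v_t|\gamma)=0$. Since the loss is nonnegative, this $v_t$ is the minimizer, and it is unique $\mu_t$-a.e. for a.e. $t$.

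For (ii), take any $\Lambda$ as in \eqref{eq:Lambda1}--\eqref{eq:Lambda2}. For a.e. $t$ and $\Lambda$-a.e. $\lambda$ we have $\lambda(t)\in\mathrm{supp}\,\mu_t$; by Fubini this holds, for $\Lambda$-a.e. $\lambda$, for a.e. $t$. Fix such a $\lambda$ and set $(x,y)\coloneqq I_{1/2}^{-1}(\lambda(1/2))$. The segment $\ell(t)=(1-t)x+ty$ solves $\dot\ell=v_t(\ell)$ and stays in $I_t(\mathrm{supp}\,\gamma)$. On each interval $[\delta,1-\delta]$ the field $v_t$ is Lipschitz on the relevant set with a uniform constant. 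A Grönwall argument on $\|\lambda(t)-\ell(t)\|^2$, which needs the Lipschitz bound only for a.e. $t$, then gives $\lambda=\ell$ on $(0,1)$, and by continuity $\lambda(0)=x$, $\lambda(1)=y$. Hence $(\lambda(0),\lambda(1))=I_{1/2}^{-1}(\lambda(1/2))$ for $\Lambda$-a.e. $\lambda$. By \eqref{eq:Lambda2} the law of $\lambda(1/2)$ is $\mu_{1/2}=(I_{1/2})_\#\gamma$, so $\tilde\gamma=(I_{1/2}^{-1})_\#\mu_{1/2}=\gamma$. Non-emptiness of $\mathcal R(\gamma)$ then follows from Corollary~\ref{cor:properties_weak_rec}(i), so $\mathcal R(\gamma)=\{\gamma\}$.

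For (iii), $\mathrm{supp}\,\gamma$ is a monotone set, so it is contained in the graph of a maximal monotone multivalued operator. By Zarantonello's theorem (or Alberti--Ambrosio), such an operator is single-valued outside a Lebesgue-null set. Since $\mu_0\ll\mathcal L^d$, this yields a monotone Borel map $u$ with $\gamma=(\mathrm{Id},u)_\#\mu_0$.

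Rectifiability is the step I expect to be the main obstacle. The Lipschitz constant $1/t$ of the $y$-component is not integrable at $t=0$, so Grönwall alone does not give uniqueness of the flow ODE starting from $x$. The straight curve $x+t(u(x)-x)$ is one solution, and on $(0,1)$ every solution staying in the support coincides with a segment, by the argument in (ii). What remains is to rule out two distinct segments emanating from the same $x$, or a solution that wanders off the support near $t=0$. I would first try the differential inequality
\[
\frac{d}{dt}\|\lambda_1-\lambda_2\|^2=2\langle\Delta z,\Delta v\rangle\le \frac{2}{t}\|\lambda_1-\lambda_2\|^2 ,
\]
obtained from the expansion above together with monotonicity. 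It shows that $\|\lambda_1-\lambda_2\|^2/t^2$ is nonincreasing. It therefore suffices that this ratio tends to $0$ as $t\to0$, i.e. that both solutions have the same derivative $u(x)-x$ at $t=0$. This is exactly where single-valuedness of $u$ at $\mu_0$-a.e. $x$ enters: a solution from $x$ lying on the segment with endpoints $(x',y')$ forces $x'=x$ and hence $y'=u(x)$. The weaker, Lebesgue-point-type statement needed for $\mu_0$-a.e. $x$ is the part that will require care.
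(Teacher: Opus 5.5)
Your parts (i) and (ii) are correct and follow the paper's argument in different notation. The $y$-component of your inverse $I_t^{-1}$ on $\mathrm{supp}\,\gamma$ is the paper's map $(t\mathrm{Id}+(1-t)u^{-1})^{-1}$. Your Gr\"onwall argument started at $t=1/2$ is a more careful version of the paper's remark that trajectories are uniquely determined on $(0,1)$. In (iii), obtaining $u$ from a maximal monotone extension plus Zarantonello/Alberti--Ambrosio, instead of the structure theorem the paper cites, is also fine.

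The genuine gap is rectifiability, which you leave open, and the route you sketch does not close as stated. Your $v_t$ is defined only on $\mathrm{supp}\,\mu_t=I_t(\mathrm{supp}\,\gamma)$. For a curve that leaves the support, neither the ODE nor your inequality $\frac{d}{dt}\|\lambda_1-\lambda_2\|^2\le\frac2t\|\lambda_1-\lambda_2\|^2$ has any meaning. So ``solutions wandering off the support'' cannot even be discussed until you fix a representative of $v_t$ off $\mathrm{supp}\,\mu_t$.

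The missing idea is to build the velocity field itself from the maximal monotone extension $\bar u\supseteq\mathrm{supp}\,\gamma$ that you already introduce. By Minty's theorem, $t\mathrm{Id}+(1-t)\bar u^{-1}$ is onto for $t\in(0,1)$, i.e.\ $I_t(\mathrm{Gr}\,\bar u)=\R^d$. Your injectivity and Lipschitz estimates hold verbatim on $\mathrm{Gr}\,\bar u$. Hence setting $\bar v_t(z)=y-x$ for $z=I_t(x,y)$ with $(x,y)\in\mathrm{Gr}\,\bar u$ gives a representative of $v_t$ that is defined on all of $\R^d$ and is $(\frac1t+\frac1{1-t})$-Lipschitz. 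This is exactly the paper's $w_t$ when $u$ is taken maximal, and it is what the paper's terse ``due to part (ii) solutions are unique'' relies on.

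With this field, your Gr\"onwall argument from (ii) applies to every solution, not just to $\Lambda$-a.e.\ curves. On $(0,1)$, any solution $\lambda$ coincides with the segment $(1-t)x'+ty'$, where $(x',y')=I_{1/2}^{-1}(\lambda(1/2))\in\mathrm{Gr}\,\bar u$. Continuity at $t=0$ then forces $x'=\lambda(0)$. So the solutions issuing from $x$ are in bijection with $\bar u(x)$. Uniqueness for $\mu_0$-a.e.\ $x$ is therefore precisely the Lebesgue-a.e.\ single-valuedness of $\bar u$ that you already used to construct $u$.

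No information at $t=0$ beyond continuity is needed, so there is no Lebesgue-point issue. Your monotonicity of $\|\lambda_1-\lambda_2\|^2/t^2$ is consistent with this (for two segments from $x$ it equals $\|y_1-y_2\|^2$), but it is superfluous.
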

\begin{proof}
\begin{enumerate}[(i)]
    \item By definition, we have that $\mathrm{supp}(\gamma)=\mathrm{Gr}(u)$ for some set-valued monotone map $u$. We now define $w_t(x)=\frac{1}{1-t}\left((t\mathrm{Id}+(1-t)u^{-1})^{-1}(x)-x\right)$ and show that $\mathcal L(w_t|\gamma)=0$. Note that since $u^{-1}$ is a monotone set-valued map, $(t\mathrm{Id}+(1-t)u^{-1})^{-1}$ is single-valued. Now let $(x,y)\in\mathrm{Gr}(u)=\mathrm{supp}(\gamma)$ and $t\in(0,1)$. Then, we have that 
    $$
    w_t((1-t)x+ty)=\frac{1}{1-t}(t\mathrm{Id}+(1-t)u^{-1})^{-1}((1-t)x+ty)-x-\frac{t}{1-t}y.
    $$
    Since $ty+(1-t)x\in (t\mathrm{Id}+(1-t)u^{-1})(y)$ and since $(t\mathrm{Id}+(1-t)u^{-1})^{-1}$ is single-valued, we have that $y=(t\mathrm{Id}+(1-t)u^{-1})^{-1}((1-t)x+ty)$.
    In particular, we have that
    \[
    w_t((1-t)x+ty)=\frac{1}{1-t} y - x - \frac{t}{1-t} y=y-x.
    \]
    Because this implies that $w_t((1-t)x+ty)=y-x$ $\gamma$-almost everywhere, we obtain
    \[
    \mathcal L(w_t|\gamma)=\int_0^1\int \|w_t((1-t)x+ty)-y+x\|^2\d\gamma(x,y)\d t=0.
    \]
    Since the loss is always non-negative, this implies that $w_t$ 
    minimizes $\mathcal L(\cdot|\gamma)$ and that $w_t=v_t$.
    \item Note that $(t\mathrm{Id}+(1-t)u^{-1})^{-1}$ is up to a constant the resolvent of a monotone mapping and therefore Lipschitz continuous. Hence, we get by the proof of the previous part, that $v_t$ is Lipschitz continuous in space for $t\in(0,1)$ and continuous in time. Thus the trajectories $\lambda$ in the measure $\Lambda$ from Definition~\ref{def:weak_rec} (see also Corollary~\ref{cor:properties_weak_rec}) are uniquely defined for $t\in(0,1)$. Since (almost every) trajectory has finite length, also the limits $t=0$ and $t=1$ are uniquely defined.
    Checking that $\Lambda(A)=\gamma(\{(x,y):\lambda(t)=(1-t)x+ty \text{ fulfills }\lambda\in A\})$ fulfills the conditions \eqref{eq:Lambda1} and \eqref{eq:Lambda2} for Definition~\ref{def:weak_rec} implies that $\gamma\in\mathcal R(\gamma)$.
    \item By \cite[Thm 4.3]{CD2014}, we know that $\gamma=(\mathrm{Id},u)_\#\mu_0$ for a (not-necessarily maximal) monotone Borel map $u$, which implies that $v_0(x)=u(x)-x$. Moreover, we know by part (i) that $v_t((1-t)x+tu(x))=u(x)-x$ (part (i) only says that this identity holds a.e., however, monotonicity of $u$ implies that there does not exist any other $x'$ with $(1-t)x+tu(x)=(1-t)x'+tu(x')$, so that $v_t$ can be chosen such that the identity holds pointwise). Together we obtain that $t\mapsto (1-t)x+tu(x)$ solves the flow ODE. Due to part (ii) solutions are unique.    
\end{enumerate}
\end{proof}

Additionally, Caffarelli studies in \cite{Caffarelli97} the regularity of $2+\epsilon$ monotone maps, which includes $3$-monotone maps as a special case. In our context \cite[Thm 0.7]{Caffarelli97} implies that $\gamma=(\mathrm{Id},u)_\#\mu_0$ for a Hölder-continuous monotone invertible mapping $u\colon\R^d\to\R^d$ whenever $N\geq 3$ and $\mu_0$ and $\mu_1$ are both absolutely continuous with densities bounded from above and locally from below.

\subsection{Reflow with Minibatch OT}

Next, we consider the Reflow algorithm, where we apply minibatch OT in each iteration. Formally, this corresponds to generating a sequence of transport plans by Algorithm~\ref{alg:minibatch_OT_rec}. 
\begin{algorithm}[t]
\begin{algorithmic}
    \State Given: batch size $N$, initial coupling $\gamma_0$
    \For{$n=0,1,2,...$}
    \State $\gamma_n^{(N)}=\mathcal F_N(\gamma_n)$\Comment{Minibatch OT}
    \State $\gamma_{n+1}\in\mathcal R(\gamma_n^{(N)})$ \Comment{Reflow}
    \EndFor
\end{algorithmic}
\caption{Reflow with Minibatch OT}
\label{alg:minibatch_OT_rec}
\end{algorithm}
It is directly clear from Corollary~\ref{cor:properties_weak_rec} and Proposition~\ref{prop:minibatchOT} that it holds
\begin{equation}\label{eq:decreasing}
\int \|x-y\|^2\d\gamma_n(x,y)\geq\int \|x-y\|^2\d\gamma_n^{(N)}(x,y)\geq \int \|x-y\|^2\d\gamma_{n+1}(x,y).
\end{equation}

In the following, we analyze limit points of the sequences $(\gamma_n)_n$ and $(\gamma_n^{(N)})_n$. Note again that such limit points exist since $\Gamma(\mu_0,\mu_1)$ is relatively compact. Moreover, $\Gamma(\mu_0,\mu_1)$ has uniformly integrable $2$-moments, so that weak convergence and convergence in $(\mathcal P_2(\R^d\times\R^d),W_2)$ coincide.

\begin{theorem}\label{thm:monotone_limit}
Let $(\gamma_n)_n$ and $(\gamma_n^{(N)})_n$ be generated by Algorithm~\ref{alg:minibatch_OT_rec} with batch size $N$ and let $\gamma$ and $\gamma^{(N)}$ be limit points. 
Then, $\gamma$ is $N$-cyclically monotone and it holds
$$
\int\|x-y\|^2\d\gamma(x,y)=\int\|x-y\|^2\d\gamma^{(N)}(x,y).
$$
\end{theorem}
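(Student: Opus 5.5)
The argument is a monotone-sequence-plus-continuity argument. Write $c(\eta)\coloneqq\int\|x-y\|^2\d\eta(x,y)$. By \eqref{eq:decreasing}, the sequence $c(\gamma_0)\ge c(\gamma_0^{(N)})\ge c(\gamma_1)\ge c(\gamma_1^{(N)})\ge\dots$ is nonincreasing and bounded below by $0$, so it converges to some $L\ge0$. In particular $c(\gamma_n)\to L$ and $c(\gamma_n^{(N)})=c(\mathcal F_N(\gamma_n))\to L$.

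Next I pass to the limit along subsequences. Suppose $\gamma_{n_k}\to\gamma$. All plans lie in $\Gamma(\mu_0,\mu_1)$, which has uniformly integrable second moments, so weak convergence coincides with $W_2$-convergence. Since $\|x-y\|^2\le 2\|x\|^2+2\|y\|^2$ is dominated by the uniformly integrable moments, $c$ is continuous along such sequences, and therefore $c(\gamma)=L$. The same reasoning applied to a limit point $\gamma^{(N)}$ of $(\gamma_n^{(N)})_n$ gives $c(\gamma^{(N)})=L$. This already proves the asserted equality $c(\gamma)=c(\gamma^{(N)})$; note that this part does not require the two limits to be taken along the same subsequence.

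For $N$-cyclic monotonicity I use Proposition~\ref{prop:minibatchOT}(iv). The map $\eta\mapsto c(\mathcal F_N(\eta))$ is $W_2$-continuous, so along $\gamma_{n_k}\to\gamma$ we get
\[
c(\mathcal F_N(\gamma))=\lim_k c(\mathcal F_N(\gamma_{n_k}))=\lim_k c(\gamma_{n_k}^{(N)})=L=c(\gamma).
\]
Proposition~\ref{prop:minibatchOT}(iii) then yields that $\gamma$ is $N$-cyclically monotone.

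The main technical point is the continuity claim in Proposition~\ref{prop:minibatchOT}(iv), which I take as given. One should also check that the almost-everywhere statement in part (iii) indeed upgrades to monotonicity of the support. This holds because the inequality is a closed condition in $(\Bx,\By)$ and the support of $\gamma^{\otimes N}$ is $\mathrm{supp}(\gamma)^N$.
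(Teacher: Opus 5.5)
Your proof is correct and follows the same route as the paper. Both arguments get a common limit of the costs from the monotonicity in \eqref{eq:decreasing}, pass to the limit using continuity of the transport cost and of $\eta\mapsto\int\|x-y\|^2\d\mathcal F_N(\eta)$ (Proposition~\ref{prop:minibatchOT}(iv)) to obtain $\int\|x-y\|^2\d\gamma=\int\|x-y\|^2\d\mathcal F_N(\gamma)$, and conclude $N$-cyclical monotonicity from Proposition~\ref{prop:minibatchOT}(iii). Your two extra remarks fill in details the paper leaves implicit under its ``without loss of generality'': that $\gamma$ and $\gamma^{(N)}$ need not arise along the same subsequence, and that the almost-everywhere inequality upgrades to the support because it is a closed condition and $\mathrm{supp}(\gamma^{\otimes N})=\mathrm{supp}(\gamma)^N$.
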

\begin{proof}
Without loss of generality, we assume that the whole sequence $(\gamma_n)_n$ converges. Otherwise, the same proof can be conducted by going over to a convergent subsequence.

By \eqref{eq:decreasing}, we obtain that there exists some $T\in\R$ such that:
$$
\lim_{n\to\infty}\int\|x-y\|^2\d\gamma_n(x,y)=\lim_{n\to\infty}\int\|x-y\|^2\d\gamma_n^{(N)}(x,y)=T.
$$
Since the transport cost $\eta\mapsto \int \|x-y\|^2\d\eta(x,y)$ is continuous wrt weak convergence, we obtain that
$$
\int\|x-y\|^2\d\gamma(x,y)=\lim_{n\to\infty}\int\|x-y\|^2\d\gamma_n(x,y).
$$
Further, we know by Proposition~\ref{prop:minibatchOT} (iv) that
\begin{align}
\int\|x-y\|^2\d\mathcal F_N(\gamma)(x,y)&=\lim_{n\to\infty}\int\|x-y\|^2\d\mathcal F_N(\gamma_n)(x,y)\\
&=\lim_{n\to\infty}\int\|x-y\|^2\d\gamma_n^{(N)}(x,y)=T.
\end{align}
In particular, we have that $\int\|x-y\|^2\d\gamma(x,y)=\int\|x-y\|^2\d\mathcal F_N(\gamma)(x,y)$ which implies by Proposition~\ref{prop:minibatchOT} (iii) that $\gamma$ is $N$-cyclically monotone.
\end{proof}

Together with the previous results, we obtain the following corollary which directly follows from Theorem~\ref{thm:monotone_limit} and Proposition~\ref{prop:N-monotone}.
\begin{corollary}\label{cor:limit_points_minibatch_reflow}
Let $(\gamma_n)_n$ be generated by Algorithm~\ref{alg:minibatch_OT_rec} and let $\gamma$ be a limit point. 
Then, if $\mu_0$ is absolutely continuous, we have that $\gamma$ is $N$-cyclically monotone, rectifiable and straight in the sense that it fulfills $\min_{w_t\in L^2(\mu_t)}\mathcal L(w_t|\gamma)=0$.
\end{corollary}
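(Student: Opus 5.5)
The corollary follows by chaining Theorem~\ref{thm:monotone_limit} with Proposition~\ref{prop:N-monotone}. The only subtlety is checking that the limit point $\gamma$ satisfies the hypotheses of Proposition~\ref{prop:N-monotone}, namely $\gamma\in\Gamma(\mu_0,\mu_1)$.

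First, I would make sure $\gamma$ is an admissible coupling. Every iterate $\gamma_n$ lies in $\Gamma(\mu_0,\mu_1)$. This uses Proposition~\ref{prop:minibatchOT}(i) for the minibatch step and Corollary~\ref{cor:properties_weak_rec}(i) for the reflow step. The marginal constraints are closed under weak convergence, since pushforward by the continuous projections $\pi_x,\pi_y$ is weakly continuous. Hence any weak limit point satisfies $\gamma\in\Gamma(\mu_0,\mu_1)$. As noted before Theorem~\ref{thm:monotone_limit}, $\Gamma(\mu_0,\mu_1)$ has uniformly integrable second moments, so the limit also lies in $\mathcal P_2$ and the convergence holds in $W_2$.

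Second, Theorem~\ref{thm:monotone_limit} applied to a subsequence converging to $\gamma$ gives that $\gamma$ is $N$-cyclically monotone. That theorem also involves a limit point of $(\gamma_n^{(N)})_n$, but its proof only uses convergence of $(\gamma_n)$ along the subsequence together with Proposition~\ref{prop:minibatchOT}(iv). I would remark that no separate limit of $\gamma_n^{(N)}$ is needed, since $\mathcal F_N(\gamma)$ plays that role.

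Third, I would apply Proposition~\ref{prop:N-monotone} to $\gamma$. Part (i) gives $\mathcal L(v_t|\gamma)=0$ for the minimizer $v_t$. Because the loss is nonnegative, this means $\min_{w_t}\mathcal L(w_t|\gamma)=0$, i.e.\ $\gamma$ is straight. Part (iii) uses the assumption that $\mu_0$ is absolutely continuous and yields rectifiability, with $\gamma=(\mathrm{Id},u)_\#\mu_0$ for a monotone $u$. There is no real obstacle here. The only point needing care is the closedness of $\Gamma(\mu_0,\mu_1)$ and, implicitly, that Proposition~\ref{prop:N-monotone}(iii) rests on the cited result \cite[Thm 4.3]{CD2014}. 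Both are already established in the paper, so the corollary is a direct assembly of these facts.
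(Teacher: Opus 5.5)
Your proposal is correct and follows the paper's own argument. The paper obtains the corollary directly by combining Theorem~\ref{thm:monotone_limit}, which gives $N$-cyclical monotonicity of the limit point, with Proposition~\ref{prop:N-monotone}: part (i) gives straightness, and part (iii) gives rectifiability when $\mu_0$ is absolutely continuous. Your extra checks, that $\gamma\in\Gamma(\mu_0,\mu_1)$ and that only the convergent subsequence of $(\gamma_n)_n$ is needed in the theorem, are sound and simply make explicit what the paper leaves implicit.
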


So far, we do not know whether the limits of $\gamma_n$ and $\gamma_n^{(N)}$ coincide. To this end, we consider the following proposition. The proof requires some technical effort and is therefore deferred to Appendix~\ref{app:limits_coincide}.

\begin{proposition}\label{prop:limits_coincide}
Assume that $\mu_0$ and $\mu_1$ are absolutely continuous and let $(\gamma_n)_n$ and $(\gamma^{(N)}_n)_n$ be generated by Algorithm~\ref{alg:minibatch_OT_rec}. Further assume that we have a subsequence $(n_k)_k$ such that $\gamma_{n_k}\wto \gamma$ for some $\gamma$. Then, it also holds for any $l\in\Z_{\geq 0}$ that $\gamma_{n_k+l}\wto \gamma$ and $\gamma_{n_k+l}^{(N)}\wto \gamma$. In particular the limit points of the sequences $(\gamma_n)_n$ and $(\gamma^{(N)}_n)_n$ coincide.
\end{proposition}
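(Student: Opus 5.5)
The plan is to prove the two one-step statements
\[
\gamma_{n_k}\wto\gamma\ \Longrightarrow\ \gamma^{(N)}_{n_k}\wto\gamma
\qquad\text{and}\qquad
\gamma^{(N)}_{n_k}\wto\gamma\ \Longrightarrow\ \gamma_{n_k+1}\wto\gamma,
\]
and then induct on $l$.

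Throughout I use three facts. First, by \eqref{eq:decreasing} the costs $\int\|x-y\|^2\d\gamma_n$ and $\int\|x-y\|^2\d\gamma_n^{(N)}$ converge along the \emph{whole} sequence to a common value $T$. Second, by Theorem~\ref{thm:monotone_limit} the limit $\gamma$ is $N$-cyclically monotone with cost $T$. Third, by Proposition~\ref{prop:N-monotone}(iii), $\gamma=(\mathrm{Id},u)_\#\mu_0$ for a monotone map $u$. Since $\Gamma(\mu_0,\mu_1)$ is $W_2$-relatively compact, in each step it suffices to show that every subsequential limit equals $\gamma$.

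\textbf{Step 1 (minibatch step).} I would lift to pairs of batches. Set $\rho_k=(\mathrm{Id},S)_\#\gamma_{n_k}^{\otimes N}$ on $(\R^d\times\R^d)^N\times(\R^d\times\R^d)^N$. Its first marginal converges to $\gamma^{\otimes N}$, and its second marginal averages to $\gamma^{(N)}_{n_k}$. Tightness gives a limit $\rho$. By closedness, $\rho$ is concentrated on pairs whose second batch is $(x^{(i)},y^{(\sigma(i))})_i$ for some permutation $\sigma$ of the first batch $(x^{(i)},y^{(i)})_i\in\mathrm{supp}(\gamma)^N$, so $y^{(j)}=u(x^{(j)})$. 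Uniform integrability of second moments and the cost convergence to $T$ on both sides then force
\[
\sum_i\|x^{(i)}-u(x^{(\sigma(i))})\|^2=\sum_i\|x^{(i)}-u(x^{(i)})\|^2\qquad\rho\text{-a.e.}
\]
It remains to show that the only permutations attaining this equality are those with $u(x^{(\sigma(i))})=u(x^{(i)})$ for $\rho$-a.e.\ batch. Given that, the limit $\eta$ of $\gamma^{(N)}_{n_k}$ equals $\gamma$.

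This tie-breaking is the main obstacle: the rotation example shows that ties can occur on a set of positive measure for a general $N$-monotone graph. My first attempt would use absolute continuity of both $\mu_0$ and $\mu_1$ together with $N$-cyclic monotonicity on a neighbourhood. A tie with $\sigma\neq\mathrm{Id}$ means the cycle sum $\sum_i\langle x^{(i)},u(x^{(i)})-u(x^{(\sigma(i))})\rangle$ vanishes. Perturbing one point $x^{(j)}$ inside a density point of $\mu_0$ should then produce a nearby batch in $\mathrm{supp}(\gamma)^N$ with a negative cycle sum, contradicting $N$-monotonicity. Here $u$ is used only on a full-measure set, and absolute continuity of $\mu_1$ keeps the $y$-values spread out. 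If this local argument fails, the fallback is to show directly that $\eta$ is $N$-cyclically monotone, has marginals $(\mu_0,\mu_1)$ and cost $T$, and is concentrated on $\mathrm{Gr}(u)$ via the disintegration $y=u(x^{(\sigma(i))})$.

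\textbf{Step 2 (reflow step).} Let $\Lambda_k$ be the path measures generating $\gamma_{n_k+1}\in\mathcal R(\gamma^{(N)}_{n_k})$. By Corollary~\ref{cor:properties_weak_rec}(iii), their kinetic energies are bounded by the cost of $\gamma^{(N)}_{n_k}$, so $(\Lambda_k)$ is tight on $C([0,1];\R^d)$ (Arzel\`a--Ascoli with $H^1$ bounds) and has a limit $\Lambda$. Lower semicontinuity of the action, combined with the squeeze
\[
\int\|x-y\|^2\d\gamma_{n_k+1}\le\text{energy}\le\int\|x-y\|^2\d\gamma^{(N)}_{n_k},
\]
in which both outer terms tend to $T$, gives
\[
\int\!\!\int_0^1\|\dot\lambda\|^2\d t\,\d\Lambda\;\le\;T\;=\;\int\|\lambda(1)-\lambda(0)\|^2\d\Lambda .
\]
Hence, as in Corollary~\ref{cor:properties_weak_rec}(iv), $\Lambda$-a.e.\ path is a straight segment. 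Its time marginals are the limits of the $\mu_{t,n_k}$, which are the interpolations of $\gamma$ by Step~1. By the injectivity of $x\mapsto(1-t)x+tu(x)$ used in Proposition~\ref{prop:N-monotone}(iii), a straight segment through the point $(1-t)x+tu(x)$ must have endpoints $(x,u(x))$. Therefore $(\lambda(0),\lambda(1))_\#\Lambda=\gamma$, i.e.\ $\gamma_{n_k+1}\wto\gamma$.

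Iterating Steps 1 and 2 gives the claim for every $l$.
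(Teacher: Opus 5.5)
Your proposal has a genuine gap in Step 1, which cannot be closed in the order you chose, and a second, independent gap at the end of Step 2.

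\textbf{Step 1.} The implication cannot be derived from the three facts you list, because for sequences satisfying only those facts it is false. Take $d=2$, $N=2$, $\mu_0=\mu_1=\mathcal N(0,I)$, write $R_\alpha$ for the rotation by angle $\alpha$, and set $\gamma_n=(\mathrm{Id},R_{\pi/2+1/n})_\#\mu_0$. For any two batch members, keeping the pairing costs $2\sin(1/n)\|x^{(1)}-x^{(2)}\|^2$ more than swapping. Hence $\mathcal F_2(\gamma_n)=\mu_0\otimes\mu_1$ for every $n$ and every choice of $S$. Yet $\gamma_n\wto\gamma=(\mathrm{Id},R_{\pi/2})_\#\mu_0$, both costs tend to $4$, and $\gamma$ is a $2$-cyclically monotone monotone graph with absolutely continuous marginals.

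For this $\gamma$ the cycle sum $\langle x^{(1)}-x^{(2)},R_{\pi/2}(x^{(1)}-x^{(2)})\rangle$ vanishes identically, so your perturbation argument has nothing to contradict. The fallback fails as well: the relation $y=u(x^{(\sigma(i))})$ ties $y$ to an \emph{independent} batch member, and indeed $\mu_0\otimes\mu_1$ is not concentrated on $\mathrm{Gr}(u)$.

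What excludes this along the algorithm is the \emph{following} reflow step, so the argument has to run backwards. The paper first shows, with no structural input, that along a further subsequence $\lim_k\gamma^{(N)}_{n_k}=\lim_k\gamma_{n_k+1}$. It does this using that the cost drop from $\gamma^{(N)}_{n_k}$ to $\gamma_{n_k+1}$, which tends to $0$, equals $\mathcal L(v^k_t|\gamma^{(N)}_{n_k})+\int\int_0^1\|\dot\lambda(t)-(\lambda(1)-\lambda(0))\|^2\d t\,\d\Lambda_k(\lambda)$. Since $\lim_k\gamma_{n_k+1}$ is a limit point of $(\gamma_n)_n$, Theorem~\ref{thm:monotone_limit} and Proposition~\ref{prop:N-monotone}(iii) show it is of the form $(\mathrm{Id},u)_\#\mu_0$. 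So the limit of the \emph{minibatch outputs} is a graph; that is the property you need, not the graph property of $\gamma$.

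Only then is the minibatch step handled. The limit equals $\frac1N\int\sum_{i,j}\psi(x^{(i)},y^{(j)})X_{i,j}\,\d\gamma^{\otimes N}$ for a doubly stochastic $X$, and $X_{i,j}>0$ forces $y^{(j)}=u(x^{(i)})$. For $i\neq j$ this is a $\gamma\otimes\gamma$-null event because $\mu_1$ has no atoms, so $X=\mathrm{Id}$ a.e. This is where absolute continuity of $\mu_1$ enters, not through tie-breaking.

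\textbf{Step 2.} Straight paths with the correct time marginals do not determine the endpoint law. In $\R^2$ with $\mu_0=\mu_1=\mathcal N(0,I)$, consider $(\mathrm{Id},R_{\theta})_\#\mu_0$ and $(\mathrm{Id},R_{-\theta})_\#\mu_0$. Both are monotone, and both are $N$-cyclically monotone for $\theta=\pi/N$. For every $t$ they have the same interpolation $\mathcal N(0,r_t^2I)$, with $r_t^2=(1-t)^2+2t(1-t)\cos\theta+t^2$. So when $\gamma=(\mathrm{Id},R_\theta)_\#\mu_0$, the law of the segments $[x,R_{-\theta}x]$ passes every test you impose on $\Lambda$: straightness, marginals, and energy $T$.

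Injectivity of $x\mapsto(1-t)x+tu(x)$ only tells you which segment of $\gamma$ passes through a point. It does not tell you that the $\Lambda$-path through that point is that segment. For that you would need $\dot\lambda(t)=v_t(\lambda(t))$ in the limit, and lower semicontinuity of the action does not give it.

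The remedy is to keep the velocity at finite $k$. Your squeeze also yields $\mathcal L(v^k_t|\gamma^{(N)}_{n_k})\to0$. Compare both $\int\psi\,\d\gamma^{(N)}_{n_k}$ and $\int\psi(\lambda(0),\lambda(1))\,\d\Lambda_k$ with $\int_0^1\int\psi(z-tv^k_t(z),z+(1-t)v^k_t(z))\,\d\mu_{t,n_k}(z)\,\d t$. Each difference tends to $0$, which is exactly the paper's reflow lemma, and it needs nothing from Step 1.
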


We remark that in general we cannot conclude convergence of the whole sequences $(\gamma_n)_n$ and $(\gamma^{(N)}_n)_n$ from the proposition.

\section{Reflow with Minibatch OT and Gradient Constraint}\label{sec:gradient}

In order to approach optimal transport with reflow, Liu \cite{L2022} proposed to restrict the velocity field in the flow matching loss \eqref{eq:flow_matching_loss} to be a gradient, i.e., to define
$$
v_t^p\in \argmin_{w_t\in L^2(\mu_t)} \mathcal L(w_t|\gamma)\quad\text{subject to } \quad w_t=\nabla \varphi \quad \text{ for some } \varphi\colon\R^d\to\R.
$$
In order to ensure existence of the above optimization problem, we have to relax the constraint $w_t=\nabla \varphi$ to $w_t\in\mathrm{T}_{\mu_t}$, where ${\mathrm T}_{\mu_t}\coloneqq\overline{\{\nabla \varphi:\varphi\in C_c^\infty(\R^d)\}}^{L^2(\mu_t)}$ is the $L^2$-closure of all gradients, see \cite[Prop 8]{HCD2025}. This space is also referred to as the (reduced) Wasserstein tangent space at $\mu_t$.
It is easy to check that a minimizer $v_t^p$ of the loss
in $\mathrm{T}_{\mu_t}$ will satisfy $\Div v_t^p\mu_t = \Div v_t\mu_t$,
so that the continuity equation is also satisfied with this
new vector field~\cite{L2022}.
Following the same idea, we can now define a weak rectified coupling with gradient constraint analogously to Definition~\ref{def:weak_rec}.

\begin{definition}[Weak Rectified Couplings with Gradient Constraint]\label{def:weak_rec_grad} 
Let $\gamma\in\Gamma(\mu_0,\mu_1)$ and $\mu_t$ be defined by \eqref{eq:def_mu_t} and denote by $v_t^p=\argmin_{w_t\in \mathrm{T}_{\mu_t}}\mathcal L(w_t|\gamma)$ the solution of the flow matching loss \eqref{eq:flow_matching_loss} with gradient constraint.
Then, we define the set of weak rectified couplings with gradient constraint $\mathcal R_p(\gamma)$ as the set of all transport plans $\tilde \gamma$ given by
\begin{equation}\label{eq:gamma_tilde2}
\int_{\R^d\times \R^d} \psi(x,y)  \d\tilde\gamma=  \int \psi(\lambda(0),\lambda(1)) \d\Lambda(\lambda),\quad\text{for all }\psi\in C_c^\infty(\R^d\times\R^d),
\end{equation}
where $\Lambda$ fulfills \eqref{eq:Lambda1} and \eqref{eq:Lambda2} with respect to $v_t^p$.
\end{definition}
It is straightforward to show that all properties of $\mathcal R(\gamma)$ from Corollary~\ref{cor:properties_weak_rec} also hold for $\mathcal R_p(\gamma)$.
Fixed points of this variant of reflow only coincide with the optimal transport under strong assumptions on the regularity of the velocity field and the support of the intermediate measures $\mu_t$, see \cite{L2022,HCD2025}. In this section, we consider the algorithm, which alternates between this reflow variant and minibatch OT, see Algorithm~\ref{alg:minibatch_OT_rec_grad}.
\begin{algorithm}[t]
\begin{algorithmic}
    \State Given: batch size $N$, initial coupling $\gamma_0$
    \For{$n=0,1,2,...$}
    \State $\gamma_n^{(N)}=\mathcal F_N(\gamma_n)$\Comment{Minibatch OT}
    \State $\gamma_{n+1}\in\mathcal R_p(\gamma_n^{(N)})$ \Comment{Reflow with gradient constraint}
    \EndFor
\end{algorithmic}
\caption{Reflow with Minibatch OT and Gradient Constraint}
\label{alg:minibatch_OT_rec_grad}
\end{algorithm}
We first study fixed points of this algorithm and afterwards derive some results on the convergence of the iterates.

\subsection{Fixed Points}

We start with investigating fixed points of $\mathcal R_p\circ\mathcal F_N$. The first lemma shows that a plan is a fixed point of this operator if and only if it is a fixed point of $\mathcal F_N$ and of $\mathcal R_p$.

\begin{lemma}\label{lem:fix}
Let $\gamma\in\Gamma(\mu_0,\mu_1)$. Then we have that $\gamma\in\mathcal R_p(\mathcal F_N(\gamma))$ if and only if $\gamma=\mathcal F_N(\gamma)$ and $\gamma\in\mathcal R_p(\gamma)$.
\end{lemma}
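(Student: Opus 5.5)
The reverse implication is immediate. If $\gamma=\mathcal F_N(\gamma)$ and $\gamma\in\mathcal R_p(\gamma)$, then $\mathcal R_p(\mathcal F_N(\gamma))=\mathcal R_p(\gamma)\ni\gamma$. All the work lies in the forward implication, and I will prove it with the cost chain.

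Assume $\gamma\in\mathcal R_p(\mathcal F_N(\gamma))$ and set $C(\eta)=\int\|x-y\|^2\d\eta$. Proposition~\ref{prop:minibatchOT}(ii) gives $C(\mathcal F_N(\gamma))\le C(\gamma)$. The analogue of Corollary~\ref{cor:properties_weak_rec}(iii) for $\mathcal R_p$ (stated in the paper to hold) gives
\[
C(\gamma)\le \int_0^1\|v_t^p\|^2_{L^2(\mu_t^{(N)})}\d t\le C(\mathcal F_N(\gamma)),
\]
where $\mu^{(N)}_t$ is the interpolation of $\mathcal F_N(\gamma)$. Hence all these quantities are equal. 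By Proposition~\ref{prop:minibatchOT}(iii), equality $C(\mathcal F_N(\gamma))=C(\gamma)$ means that $\gamma$ is $N$-cyclically monotone.

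Next I show that $\mathcal F_N(\gamma)=\gamma$. For $\gamma^{\otimes N}$-a.e.\ batch $(\Bx,\By)$, the identity permutation lies in $\mathrm{supp}(\gamma)^N$ and attains the minimum in \eqref{eq:opt_permutation}. This is the main obstacle: $S$ may select a different optimal permutation, as in the rotation example. To handle it, I use the equality case more sharply. The middle inequality $\int_0^1\|v^p_t\|^2\le C(\mathcal F_N(\gamma))$ is an equality, and it comes from $\mathcal L(v^p_t|\mathcal F_N(\gamma))\ge0$ via the decomposition of the loss. Equality therefore forces $\mathcal L(v^p_t|\mathcal F_N(\gamma))=0$, so $\mathcal F_N(\gamma)$ is straight: $v^p_t((1-t)x+ty)=y-x$ for $\mathcal F_N(\gamma)$-a.e.\ pair. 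The equality case (iv) also shows that $\gamma$ is generated by straight trajectories of $v^p$. So along $\Lambda$-a.e.\ curve we have $\lambda(t)=(1-t)\lambda(0)+t\lambda(1)$ with velocity $v_t^p(\lambda(t))=\lambda(1)-\lambda(0)$.

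Now fix $t\in(0,1)$. The field $v^p_t$ is one $\mu_t^{(N)}$-a.e.\ defined function, and the two plans have the same time-$t$ marginal by \eqref{eq:Lambda2}. This single field assigns the displacement $y-x$ to pairs of $\mathcal F_N(\gamma)$ and also to pairs of $\gamma$ through the point $(1-t)x+ty$. A point $z$ together with its velocity determines $x=z-tv$ and $y=z+(1-t)v$. Hence both plans equal the pushforward of $\mu_t^{(N)}$ under $z\mapsto(z-tv^p_t(z),\,z+(1-t)v^p_t(z))$, which gives $\mathcal F_N(\gamma)=\gamma$.

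It follows that $\gamma\in\mathcal R_p(\mathcal F_N(\gamma))=\mathcal R_p(\gamma)$. The delicate point is making the pushforward identification rigorous: one has to check that the a.e.\ identities transfer through the common marginal $\mu_t^{(N)}$ at a single fixed $t$. This uses Fubini to pick a $t$ at which both a.e.\ statements hold.
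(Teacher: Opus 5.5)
Your proposal is correct and is essentially the paper's proof. The cost chain collapses, which gives $N$-cyclic monotonicity, straightness of $\mathcal F_N(\gamma)$, straight $\Lambda$-curves and hence $\mu_t=\mu_t^{(N)}$; both plans are then determined by this common interpolation together with one velocity field. The differences are minor. You work with $v_t^p$ throughout, via the identity $\int_0^1\|v_t^p\|_{L^2(\mu_t^{(N)})}^2\d t=C(\mathcal F_N(\gamma))-\mathcal L(v^p|\mathcal F_N(\gamma))$, rather than citing \cite[Thm 5.3]{L2022} for the unconstrained field, so a single field drives both plans without having to identify $v_t$ with $v_t^p$. You also recover both plans at one fixed $t$ as pushforwards of $\mu_t^{(N)}$ under $z\mapsto(z-tv_t^p(z),\,z+(1-t)v_t^p(z))$, instead of identifying $\gamma_t=\gamma_t^{(N)}$ on the graph of $u_t=I+(1-t)v_t$ and letting $t\to0$.
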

\begin{proof}
Let $\gamma^{(N)}=\mathcal F_N(\gamma)$. Since the Wasserstein transport cost is non-increasing for both $\mathcal F_N$ and $\mathcal R_p$ we get that
\[
\int\|x-y\|^2\d\gamma(x,y)=\int\|x-y\|^2\d\gamma^{(N)}(x,y).
\]
By Proposition~\ref{prop:minibatchOT} this implies that $\gamma$ is $N$-cyclically monotone.
Next, define $\mu_t^{(N)}$ and $\mu_t$ as in \eqref{eq:def_mu_t}
and let $v_t=\argmin_{w_t\in L^2(\mu_t^{(N)})}\mathcal L(w_t|\gamma^{(N)})$. Then, we obtain by \cite[Thm 5.3]{L2022} that $\int\|x-y\|^2\d\gamma(x,y)\leq \int\|x-y\|^2\d\gamma^{(N)}(x,y)-\mathcal L(v_t|\gamma^{(N)})$, so that $\mathcal L(v_t|\gamma^{(N)})=0$.
In particular, we know for $\Lambda$ from Definition~\ref{def:weak_rec} that $\lambda(t)=(1-t)\lambda(0)+t\lambda(1)$ and $\dot\lambda(t)=\lambda(1)-\lambda(0)$ for $\Lambda$-a.e.~$\lambda$. Hence, we get for all $\psi\in C_c^\infty(\R^d)$ that
\begin{align}
\int\psi(x)\d\mu_t(x)&=\int\psi((1-t)x+ty)\d\gamma(x,y)=\int \psi((1-t)\lambda(0)+t\lambda(1))\d\Lambda(\lambda)\\
&=\int \psi(\lambda(t))\d\Lambda(\lambda)=\int\psi(x)\d\mu_t^{(N)}(x),
\end{align}
so that $\mu_t=\mu_t^{(N)}$.
In addition, we have by Corollary~\ref{cor:properties_weak_rec} that
\begin{align}
\mathcal L(v_t|\gamma)&=\int_0^1\int \|v_t((1-t)x+ty)-y+x\|^2\d\gamma(x,y)\d t\\
&=\int_0^1\int \|v_t((1-t)\lambda(0)+t\lambda(1))-\lambda(1)+\lambda(0)\|^2\d\Lambda(\lambda)\d t\\
&=\int_0^1\int \|v_t(\lambda(t))-\dot \lambda(t)\|^2\d\Lambda(\lambda)\d t=0.
\end{align}
Defining the plans $\gamma_t$ by
\[
\int \psi(x,y)\d\gamma_t(x,y)=\int\psi((1-t)x+ty,y)\d\gamma(x,y)
\]
and $\gamma_t^{(N)}$ analogously, we obtain from $\mathcal L(v_t|\gamma)=\mathcal L(v_t|\gamma^{(N)})=0$ that for almost every $t$ it holds
\[
0=\int \|x+(1-t)v_t(x)-y\|^2\d\gamma_t(x,y)=\int \|x+(1-t)v_t(x)-y\|^2\d\gamma_t^{(N)}(x,y),
\]
so that both $\gamma_t$ and $\gamma_t^{(N)}$ are supported on $\Gr u_t$ with $u_t=I+(1-t)v_t$. Hence, we have for any measurable rectangle $A\times B$ that
\begin{align}
\gamma_t(A\times B)&=\gamma_t(\{(x,u_t(x)):x\in A, u_t(x)\in B\})\\
&=\gamma_t(\{x\in A:u_t(x)\in B\}\times \R^d)=\mu_t(\{x\in A:u_t(x)\in B\}).
\end{align}
By the same argument we have that $\gamma_t^{(N)}(A\times B)=\mu_t(\{x\in A:u_t(x)\in B\})$. Since $A$ and $B$ were chosen arbitrarily, this implies $\gamma_t=\gamma_t^{(N)}$.
Given that $\gamma_t\wto \gamma$ and $\gamma_t^{(N)}\wto \gamma^{(N)}$ this implies $\gamma=\gamma^{(N)}$.  
\end{proof}

Together with Proposition~\ref{prop:minibatchOT}, the lemma implies that any fixed point $\gamma$ of $\mathcal R_p\circ\mathcal F_N$ is $N$-cyclically  monotone and it holds $\min_{w_t\in\mathrm{T}_{\mu_t}}\mathcal L(w_t|\gamma)=0$, where $\mu_t$ are the interpolations from \eqref{eq:def_mu_t} with respect to $\gamma$.
Then, we obtain that fixed points of $\mathcal R_p\circ\mathcal F_N$ are optimal transport under some support condition by the following theorem.

\begin{theorem}\label{thm:optimality}
Let $\gamma\in\Gamma(\mu_0,\mu_1)$ for $\mu_0,\mu_1\in L^\infty(\R^d)$ be $N$-cyclically monotone and assume that it fulfills
$\min_{w_t\in\mathrm{T}_{\mu_t}}\mathcal L(w_t|\gamma)=0$ for the interpolations $\mu_t$ from \eqref{eq:def_mu_t}.
Further, assume that there exists some $t\in(0,1)$ such that $\mu_t$ is absolutely continuous and that $\mu_t^{-1}\in L^1_{\textup{loc}}(\R^d)$. 
Then, $\gamma$ is an optimal transport plan between $\mu_0$ and $\mu_1$.
\end{theorem}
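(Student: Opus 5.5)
The plan is to show that at the chosen time $t$ the displacement map is the gradient of a convex function. Then $\gamma$ is supported on the graph of a subdifferential, so it is cyclically monotone of all orders and hence optimal by \cite[Thm 5.9, Rem 5.10]{Villani2008}.

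\textbf{Step 1: reduce to a map at time $t$.} Since $\min_{w\in\mathrm T_{\mu_t}}\mathcal L(w|\gamma)=0$, we have $v^p_t((1-t)x+ty)=y-x$ for $\d t\otimes\gamma$-a.e.\ $(t,x,y)$, with $v^p_t\in\mathrm T_{\mu_t}$. Because $\gamma$ is $N$-cyclically monotone, the proof of Proposition~\ref{prop:N-monotone}(i) shows that $v_t=v^p_t$ can be taken to be the explicit field $w_t(z)=\frac{1}{1-t}\big((t\mathrm{Id}+(1-t)u^{-1})^{-1}(z)-z\big)$. This field is Lipschitz, since the resolvent of a monotone map is $1$-Lipschitz up to scaling. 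Fix a time $t$ where the hypotheses hold; by the a.e.-in-time statement we may perturb $t$ slightly if needed, and a remark on this is required. Then $\gamma_t:=((1-t)x+ty,\,y)_\#\gamma$ is supported on the graph of $z\mapsto z+(1-t)w_t(z)$, and similarly the $x$-endpoint is $z-tw_t(z)$. So $\gamma=(T_0,T_1)_\#\mu_t$ with $T_0=\mathrm{Id}-tw_t$ and $T_1=\mathrm{Id}+(1-t)w_t$.

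\textbf{Step 2: $w_t$ is a gradient.} This is the main obstacle. We know $w_t$ is Lipschitz and lies in the $L^2(\mu_t)$-closure of $\{\nabla\varphi:\varphi\in C_c^\infty\}$. I would use the hypothesis $\mu_t^{-1}\in L^1_{\rm loc}$ for the following. Take $\nabla\varphi_k\to w_t$ in $L^2(\mu_t)$. For a compact $K$, Cauchy--Schwarz gives
\[
\int_K|\nabla\varphi_k-w_t|\,\d z\le\Big(\int_K|\nabla\varphi_k-w_t|^2\mu_t\Big)^{1/2}\Big(\int_K\mu_t^{-1}\Big)^{1/2}\to0,
\]
so $\nabla\varphi_k\to w_t$ in $L^1_{\rm loc}$. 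Hence the distributional curl of $w_t$ vanishes, and $w_t=\nabla\phi$ on $\R^d$ for some $\phi\in C^{1,1}_{\rm loc}$ (Poincaré lemma for curl-free $L^1_{\rm loc}$ fields on the simply connected $\R^d$). This also implies $\mu_t>0$ a.e., so the identity holds on all of $\R^d$ and not only on the support. The boundedness $\mu_0,\mu_1\in L^\infty$ should enter in checking that $\mu_t$ indeed has a density and that the Lipschitz extension is consistent.

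\textbf{Step 3: convexity.} Now $T_1=\nabla\big(\tfrac12|z|^2+(1-t)\phi\big)$ and $T_0=\nabla\big(\tfrac12|z|^2-t\phi\big)$. Monotonicity of $u$, i.e.\ $2$-cyclic monotonicity, means $\langle x-x',y-y'\rangle\ge0$ on the support. Since $u$ is monotone, the resolvent structure gives that $T_0$ is $1$-Lipschitz monotone-type. Concretely, $\mathrm{Id}-tw_t$ is the resolvent of $t(u^{-1}-\mathrm{Id})$-type maps, and this should yield $D^2\big(\tfrac12|z|^2-t\phi\big)\succeq0$ and likewise for $T_1$ wherever the monotonicity holds. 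Using that $\nabla\phi$ is defined everywhere and $\mu_t$ has full support, the potentials $f_0=\tfrac12|z|^2-t\phi$ and $f_1=\tfrac12|z|^2+(1-t)\phi$ are convex on $\R^d$, and $f_0$ is in fact strictly convex with $\nabla f_0$ invertible.

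\textbf{Conclusion.} We get $y=\nabla f_1((\nabla f_0)^{-1}(x))$. This equals $\nabla g(x)$ for the convex $g$ defined through $\frac{1}{t}\big(\tfrac12|\cdot|^2-f_0^*\big)$-type manipulations, namely
\[
y=\frac{1}{t}\Big(z-(1-t)x\Big),\qquad z=\nabla f_0^*(x).
\]
Hence $y=\nabla\big(\tfrac1t f_0^*-\tfrac{1-t}{2t}|x|^2\big)$, which is convex because $D^2 f_0\preceq\tfrac1t\,\mathrm{Id}$ by convexity of $f_1$. Knie–Brenier then gives optimality. The delicate points are Step 2 (upgrading $L^2(\mu_t)$-closure to a genuine gradient on all of $\R^d$) and the final convexity check for $g$.
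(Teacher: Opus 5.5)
Your route is viable and finishes differently from the paper. However, the step you defer (``a remark on this is required'') is a genuine missing step, not a formality. The hypothesis $\min_{w\in\mathrm T_{\mu}}\mathcal L(w|\gamma)=0$ is integrated in time, so it gives $w_s\in\mathrm T_{\mu_s}$ only for a.e.\ $s$. Absolute continuity and $\mu_t^{-1}\in L^1_{\rm loc}$ are assumed at one fixed $t$. Step~2 needs both at the same time, and nothing in Steps~1--3 shows that the integrability propagates to nearby times.

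With what you already have, the fix is short. The map $u_{t,s}:=\mathrm{Id}+(s-t)w_t$ satisfies $u_{t,s}((1-t)x+ty)=(1-s)x+sy$ on $\mathrm{supp}\,\gamma$, so $\mu_s=(u_{t,s})_\#\mu_t$. It is a bi-Lipschitz homeomorphism whenever $|s-t|\,\mathrm{Lip}(w_t)<1$; the paper proves this for all $s\in(0,1)$ from the monotonicity of $u$. Hence $\mu_s$ is absolutely continuous, and by the area formula, for compact $K$,
\[
\int_K\frac{\d\xi}{\mu_s(\xi)}=\int_{u_{t,s}^{-1}(K)}\frac{|\det\nabla u_{t,s}(z)|^2}{\mu_t(z)}\,\d z\le \mathrm{Lip}(u_{t,s})^{2d}\int_{u_{t,s}^{-1}(K)}\frac{\d z}{\mu_t(z)}<\infty .
\]
You can then choose $s$ in the full-measure set where $w_s\in\mathrm T_{\mu_s}$ and run Steps~2--3 at time $s$. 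This is exactly the transfer the paper carries out. It also shows that $\mu_0,\mu_1\in L^\infty$ plays no role in your argument.

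Some claims in Step~3 and the conclusion need fixing, though the argument survives.
\begin{itemize}
\item Take $u$ maximal monotone, so that $w_t$, $T_0$, $T_1$ are defined on all of $\R^d$.
\item $T_0=((1-t)\mathrm{Id}+tu)^{-1}$ is $\frac1{1-t}$-Lipschitz, not $1$-Lipschitz. What you actually need is monotonicity, which follows directly from $\langle\xi-\xi',x-x'\rangle=(1-t)|x-x'|^2+t\langle y-y',x-x'\rangle\ge0$, and for $T_1$ from $\langle\xi-\xi',y-y'\rangle\ge t|y-y'|^2$.
\item Since $f_1=\frac1{2t}|z|^2-\frac{1-t}{t}f_0$, convexity of $f_1$ gives $D^2f_0\preceq\frac1{1-t}\mathrm{Id}$, not $\frac1t\mathrm{Id}$. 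Fortunately $\frac1{1-t}$ is exactly what you need: $g=\frac1tf_0^*-\frac{1-t}{2t}|\cdot|^2$ is convex iff $f_0^*$ is $(1-t)$-strongly convex iff $\nabla f_0$ is $\frac1{1-t}$-Lipschitz.
\item Strict convexity of $f_0$ and invertibility of $\nabla f_0$ are unjustified: they fail wherever $u$ is multivalued, since $T_0$ is constant on $(1-t)x+tu(x)$. They are also unnecessary. For $(x,y)\in\mathrm{supp}\,\gamma$ and $z=(1-t)x+ty$ you have $x=\nabla f_0(z)$, hence $z\in\partial f_0^*(x)$. The sum rule $\partial f_0^*=t\,\partial g+(1-t)\mathrm{Id}$ then gives $y=\frac1t(z-(1-t)x)\in\partial g(x)$.
\end{itemize}

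On the comparison with the paper: both proofs get the gradient structure from the embedding $L^2(\mu)\subset L^1_{\rm loc}$ when $\mu^{-1}\in L^1_{\rm loc}$ (first part of Lemma~\ref{lem:gradienttangent}). Your convexity of $f_0,f_1$ carries the same information as the paper's convexity of $\frac{|\cdot|^2}{2s}-\varphi_s$ and $\frac{|\cdot|^2}{2(1-s)}+\varphi_s$.

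The paper then identifies $\varphi_s$ as the Hopf--Lax evolution of $\varphi_t$ and concludes that $u_{t,s}=\mathrm{Id}+(s-t)\nabla\varphi_t$ is a Brenier map. Hence $\gamma_{s,1-s}$ is optimal, and it lets $s\to0$ using stability of optimal plans. You instead write down a Brenier potential for $\gamma$ itself at one interior time via Legendre duality. This avoids both the Hopf--Lax identification and the limit, and handles the non-invertible endpoint maps through subdifferentials.

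The paper's route pays off in the localized version in the remark after the theorem, where only $\mu_t^{-1}\in L^1_{\rm loc}(A)$ on a connected open $A=\{\mu_t>0\}$. There the potentials live only on possibly non-convex sets $A_s$, so no global Legendre transform is available, but the inf-convolution formula still works. Your argument uses $\mu_t>0$ a.e.\ on all of $\R^d$ in an essential way.
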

\begin{remark}
From the proof, one sees that the assumption on $\mu_t^{-1}$ could
be slightly weakened: it is sufficient that $\mu_t$ is absolutely continuous and that there exists a connected open set $A\subset \R^d$ with $\{\mu_t >0\}=A$ and $\mu_t^{-1}\in L^1_{\textup{loc}}(A)$. (If $\mu_t$ is continuous, having $A=\{\mu_t>0\}$ connected is thus sufficient.)
Further, the proof already shows that $\mu_t$ is absolutely continuous and bounded from above.
\end{remark}
\begin{proof}
We know that there is a monotone
map $u$ such that $\gamma\mres \Gr u=\gamma$. We can assume, in fact,
that $u$ is maximal-monotone. 
From the assumption that $\mu_0,\mu_1\in L^\infty(\R^d)$, we also obtain that $\mu_t\in L^\infty(\R^d)$ by the following argument:
We have that $\mu_0=((1-t)\mathrm{Id}+tu)^{-1}_\#\mu_t$. Now $((1-t)\mathrm{Id}+tu)^{-1}$ is up to a constant the resolvent of a monotone map and therefore Lipschitz continuous. Hence, \cite[Thm 2.8]{EG2015}  implies that $\mu_t$ is absolutely continuous and that the density of $\mu_t$ is upper bounded by $L^d$ times the density of $\mu_0$, where $L$ is the Lipschitz constant of $((1-t)\mathrm{Id}+tu)^{-1}$.

Next,
for any $t,s\in [0,1]$, $\gamma_{t,s}$ defined as:
\[
  \int \psi(x,y)d\gamma_{t,s}(x,y)
  = \int \psi((1-t)x+t y,(1-s)x+s y)d\gamma(x,y)
\]
is a coupling between $\mu_t$ and $\mu_s$. Now, $\gamma_{t,s}$ is
supported on
\[
  \Gr u_{t,s} := \{ ((1-t)x+ty,(1-s)x+sy): y\in u(x)\},
\]
and if $(\xi,\eta), (\xi',\eta')\in \Gr u_{t,s}$ with $\xi=(1-t)x+ty$,
$\xi'=(1-t)x'+ty'$, etc, assuming first $s>t$ one uses
that $ \xi = \frac{t}{s}\eta  + \frac{s-t}{s}x$ to get
\[
  (\xi-\xi')\cdot(\eta-\eta ') \ge \frac{t}{s}|\eta-\eta'|^2 + \frac{s-t}{s} (x-x')\cdot(\eta-\eta') \ge \frac{t}{s}|\eta-\eta'|^2
\]
using that $u$ is monotone.
One deduces that $u_{t,s}$ is $\frac{s}{t}$-Lipschitz. Similarly, if
$s<t$, writing $\xi = \frac{1-t}{1-s}\eta + \frac{t-s}{1-s}y$ we
find that $u_{t,s}$ is $\frac{1-s}{1-t}$-Lipschitz.
Since by construction $u_{t,s}=u_{s,t}^{-1}$ we can equivalently
write  
that for $0<t,s<1$, $u_{t,s}$ is
 $\max\{\frac{1-s}{1-t},\frac{s}{t}\}$-Lipschitz
and $\min\{\frac{s}{t},\frac{1-s}{1-t}\}$-strongly monotone.

Hence, for any $t,s\in (0,1)$, $u_{t,s}$ is
a bi-Lipschitz homeomorphism and one has:
\[
  \mu_s = {u_{t,s}}_\# \mu_t.
\]
In addition, if for some $s\in (0,1)$, $\mu_{s}$ is absolutely
continuous, then all other $\mu_t$ are (for $t\in (0,1)$), with
a.e.:
\begin{equation}\label{eq:formulamus}
  \mu_s(x) = \mu_t(u_{s,t}(x))\det\nabla u_{s,t}(x).
\end{equation}
For $s>t$, one has $\det\nabla u_{t,s} \ge (\frac{1-s}{1-t})^{d}$
and $\det\nabla u_{s,t}\ge (\frac{t}{s})^{d}$ (so that, up to a set of
vanishing measure, $\{\mu_s>0\} = u_{t,s}(\{\mu_t>0\})$ and these sets
should have the same number of connected components).
Observe, more precisely that, given $t\in (0,1)$, if $A_t\subseteq\R^d$ is a
connected  open set
where $\mu_t$ is absolutely continuous and $\mu_t^{-1}$ is
integrable, then the same holds for  $\mu_s$ in $A_s:=u_{t,s}(A_t)$,
for any $s\in (0,1)$, since
\[
\int_{A_s} \frac{1}{\mu_s(x)}dx = \int_{A_t} \frac{\det\nabla u_{t,s}(x)}{\mu_s(u_{t,s}(x))}\,dx
= \int_{A_t} \frac{(\det\nabla u_{t,s}(x))^2}{\mu_t(x)}\,dx <+\infty.
\]

Then, for a fixed point one has: 
\[
  \int_{0}^{1} \int_{A_s} \|v_s (x) - (u_{s,1}(x)-u_{s,0}(x))\|^2d\mu_s ds = 0 
\]
for $v_s = \nabla \varphi_s$ with $\varphi_s\in W^{1,1}_{\text{loc}}(A_s)$
for a.e.~$s$, thanks to Lemma~\ref{lem:gradienttangent} in Appendix~\ref{sec:gradtan}.

Hence, one has $\nabla\varphi_s(x_s) = u_{s,1}(x_s)-u_{s,0}(x_s)$
for a.e. $s\in (0,1)$ and a.e.~$x_s\in A_s$,
with $x_s = (1-s)u_{s,0}(x_s) + s u_{s,1}(x_s)$.
Since $u_{s,0}$ is $\frac{1}{1-s}$-Lipschitz and $u_{s,1}$ is $\frac{1}{s}$-Lipschitz,
we find that
\[
  \frac{|\cdot|^2}{2(1-s)}+ \varphi_s\quad\text{ and }
  \quad  \frac{|\cdot|^2}{2s}- \varphi_s
\]
are convex in $A_s$, with Lipschitz gradient.
In particular, given $t\in (0,1)$ and $s$ with $t<s<1$,
for any $x_s \in A_s$ as long as $x_s-z\in A_t$,
\[
z\mapsto  \frac{|z|^2}{2(s-t)}+ \varphi_t(x_s-z)
\]
is strongly convex. Now, if $x_s=(1-s)x + sy$ with $y\in u(x)$,
and $z=(s-t)(y-x)$, then $x_s-z = (1-t)x+ty=:x_t$ and the gradient
of the above expression vanishes.
This means that
\[
  \varphi_{t,s} (x_s):= \min_{z:x_s-z\in A_t}  \frac{|z|^2}{2(s-t)}+ \varphi_t(x_s-z)
\]
is given by $(s-t)\frac{|x-y|^2}{2} + \varphi_t(x_t)$, in addition, a
standard first variation argument shows that $\nabla\varphi_{t,s}(x_s)
= \nabla\varphi_t(x_t) = y-x = z/(s-t)= (x_s-x_t)/(s-t)$.
It follows that $\nabla\varphi_{t,s}(x_s)=\nabla\varphi_s(x_s)$ in $A_s$,
and as a consequence, since these functions are defined up to a constant,
one can assume that for all $s>t$,
\begin{equation}\label{eq:infconvs}
  \varphi_{s} (x_s)=
  \min_{z:x_s-z\in A_t}  \frac{|z|^2}{2(s-t)}+ \varphi_t(x_s-z)
  =
  \min_{x_t\in A_t}  \frac{|x_s-x_t|^2}{2(s-t)}+ \varphi_t(x_t).
\end{equation}
In addition, we have $x_s = u_{t,s}(x_t) = x_t + (s-t)\nabla\varphi_t(x_t)$, and since this is the gradient
of a convex function, Brenier's theorem shows that $u_{t,s}$
is an optimal transport from $\mu_t\mres A_t$ to $\mu_s\mres A_s$.
In the limit, we also deduce that $\gamma^{A}=
\lim_{s\to 0} \gamma_{s,1-s}\mres (A_s\times A_{1-s})$
is an optimal coupling
between $\mu_0^{A_0}= \lim_{s\to 0} \mu_s\mres A_s$ and
$\mu_1^{A_1}= \lim_{s\to 1} \mu_s\mres A_s$, and in particular
if we can choose $A_s=\R^d$ (that is, if for some $s$,
$\mu_s^{-1}\in L^1_{\text{loc}}(\R^d)$), we have that $\gamma$
is an optimal coupling between $\mu_0$ and $\mu_1$.
\end{proof}

\subsection{Integrability of $\mu_t$}

The most restrictive assumption of the theorem is the integrability condition of $\mu_t$ since it depends on $\gamma$ (which is the fixed point and therefore analytically inaccessible). 
In the following, we consider the case that $\mu_0$ and $\mu_1=u_\#\mu_0$ are absolutely continuous and bounded from above and locally from below (i.e., for any $R>0$ there exists some $\delta$ such that $\mu_0\geq \delta$ and $\mu_1\geq\delta$ on $B_R(0)$). Further, we consider without loss of generality the case that $t=1/2$. 

If $u$ is the optimal transport $u=\nabla \varphi$ with convex $\varphi$, it satisfies the Monge-Amp\`ere equation
\begin{equation}\label{eq:Monge-Ampere}
\det(D^2\varphi)=f\coloneqq\frac{\mu_0}{\mu_1\circ\nabla\varphi}.
\end{equation}
Without further assumptions, Wang \cite[Example 1]{Wang95} shows that (in dimension $d=2$) there exist $\varphi$ and $\mu_1$ such that locally $\varphi\not\in W^{2,2}$ (note that Wang states this result for $W^{2,p}$ for $p>2$, but it can be extended to $p=2$ by a closer consideration). Noting that $\det(\frac{I+\nabla u}{2})\mu_{1/2}(\frac{\mathrm{Id}+u}{2})=\mu_0$, this implies for any ball $B$ and $A=(\frac{I+\nabla u}{2})^{-1}(B)$ that
\begin{align*}
\int_B\frac{1}{\mu_{1/2}(x)}\d x&=\int_A \frac{\det(\frac{I+\nabla u(x)}{2})}{\mu_{1/2}(\frac{x+u(x)}{2})}\d x\\&=\int_A \frac{\det(\frac{I+\nabla u(x)}{2})^2}{\mu_{0}(x)}\d x\geq \frac{1}{2^{2d}\|\mu_0\|_\infty}\int_A \det(I+\nabla u(x))^2\d x
\end{align*}
Since we have $\det(I+\nabla u(x))^2\geq\lambda_{\max}(\nabla u(x)^\tT \nabla u(x))=\|\nabla u(x)\|^2_{\text{op}}$, we obtain that $\int_B\frac{1}{\mu_{1/2}(x)}\d x\geq c\int_A\|\nabla u(x)\|^2_{\text{op}}\d x$ for some $c>0$. Because $\varphi\not\in W^{2,2}(\R^d)$, the right side is not locally integrable so that also $\mu_{1/2}^{-1}$ is not locally integrable.
So in summary, without further assumption on $\mu_0$ and $\mu_1$, $N$-monotonicity is not sufficient to ensure that the support condition is fulfilled.

On the other hand, if $u$ is the optimal transport and if the densities of $\mu_0$ and $\mu_1$ are additionally continuous, then also the ratio $f$ from \eqref{eq:Monge-Ampere} is continuous \cite{caffarelli1992regularity}.
In this case, Caffarelli \cite{Caffarelli90} showed that $\varphi\in W^{2,p}$ (locally) for any $p>1$. In particular, we get by a similar computation as above that
$$
\int_B\frac{1}{\mu_{1/2}(x)}\d x\leq \frac{1}{2^{2d}\min_A \mu_0(x)}\int_A (1+\|\nabla u(x)\|_\mathrm{op})^{2d}\d x<\infty.
$$
Consequently, at least the optimal transport fulfills the support condition provided that $\mu_0$ and $\mu_1$ are absolutely continuous with continuous density bounded from above and locally from below.

A relatively standard blow-up analysis shows that if $\xi$ is such that $\int_{B_r(\xi)}\frac{1}{\mu_{1/2}(x)}\d x = \infty$ for all $r>0$, then near $\bar x=u_{1/2,0}(\xi)$ there are blow-ups of $u$ (of the form $z\mapsto \frac{u(x+rz)-u(x)}{r}$ for $x\to\bar x$, $r\to 0$) which in some 2-dimensional subspace $(\R e_1,\R e_2)$ converge to the limit monotone map $\R e_1\times\{0\}\mapsto \{0\}\times \R e_2$. In 2D,
such a limit map would transport
a null set to a null set with intermediate states $x_t$ spanning
the whole of $\R^2$.

\subsection{Convergence of the Iterates}

Next, we consider the question under which conditions a limit point $\gamma$ of the iterates $(\gamma_n)_n$ from Algorithm~\ref{alg:minibatch_OT_rec_grad} is a fixed point of $\mathcal R_p\circ\mathcal F_N$. Replacing $\mathcal R$ by $\mathcal R_p$ in the proofs of Corollary~\ref{cor:limit_points_minibatch_reflow} and Proposition~\ref{prop:limits_coincide} shows that $\gamma$ is $N$-cyclically monotone, straight and rectifiable. In particular, we have that $\gamma$ is a fixed point of $\mathcal R\circ\mathcal F_N$. However, to ensure that $\gamma$ is also a fixed point of $\mathcal R_p\circ \mathcal F_N$, we would need that $v_t\in\argmin_{w_t\in L^2(\mu_t)}\mathcal L(w_t|\gamma)$ fulfills that $v_t\in\mathrm{T}_{\mu_t}$.
By construction, we additionally know that $v_t^n\in\argmin_{w_t\in \mathrm{T}_{\mu_{t,n}}}\mathcal L(w_t|\gamma_n)$ fulfills (up to subsequences) that $v_t^n\in L^2(\mu_{t,n})$ converges to $v_t\in L^2(\mu_t)$ in the sense of \cite[Def 5.4.3]{AGS2008}, see also Section~\ref{subsec:weak_reflow}.
However, the tangent spaces $\mathrm{T}_\mu$ are not closed under this notion of weak convergence and the assertion that this implies $v_t\in\mathrm{T}_{\mu_t}$ is false in general.
Nevertheless, as detailed in Lemma~\ref{lem:gradienttangent}, the gradient property passes to the limit if not only the measures $\mu_{t,n}$ converge weakly to $\mu_t$ but also $1/\mu_{t,n}$ converges weakly to $1/\mu_t$. Under this additional assumption, we obtain the following corollary.

\begin{corollary}
Let $(\gamma_n)_n$ be generated by Algorithm~\ref{alg:minibatch_OT_rec_grad} and $(n_k)_k$ a subsequence such that $(\gamma_{n_k})_k$ converges (weakly) to a limit point $\gamma\in\Gamma(\mu_0,\mu_1)$. Denote by $\mu_t$ and $\mu_{t,n}$ the corresponding interpolations \eqref{eq:def_mu_t}. Assume that there exists $t\in(0,1)$ such that $\mu_{t,n_k}$ is absolutely continuous for all $k$ and fulfills that $1/\mu_{t,n_k}\rightharpoonup 1/\mu_t$ 
weakly in $L^1_\mathrm{loc}(\R^d)$. Then $\gamma$ is an optimal transport plan between $\mu_0$ and $\mu_1$.
\end{corollary}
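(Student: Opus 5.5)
The plan is to reduce the corollary to Theorem~\ref{thm:optimality}. Doing so requires three things for the limit $\gamma$: (a) $\gamma$ is $N$-cyclically monotone, (b) $\min_{w_t\in\mathrm{T}_{\mu_t}}\mathcal L(w_t|\gamma)=0$, and (c) the support and integrability hypothesis on $\mu_t$ for some $t\in(0,1)$. The hypothesis $\mu_0,\mu_1\in L^\infty$ is not listed in the corollary, but the Remark after Theorem~\ref{thm:optimality} shows it is only used to get absolute continuity of $\mu_t$, which we will obtain directly.

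Step (a) follows as in Theorem~\ref{thm:monotone_limit}. By Corollary~\ref{cor:properties_weak_rec}(iii) for $\mathcal R_p$ together with Proposition~\ref{prop:minibatchOT}(ii), the costs along Algorithm~\ref{alg:minibatch_OT_rec_grad} are nonincreasing, so they converge to some $T$. Continuity of the plain cost and Proposition~\ref{prop:minibatchOT}(iv) give $\int\|x-y\|^2\d\gamma=\int\|x-y\|^2\d\mathcal F_N(\gamma)=T$. Proposition~\ref{prop:minibatchOT}(iii) then yields $N$-cyclical monotonicity.

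For (b), we first need that $\gamma$ is straight with the unconstrained field. Proposition~\ref{prop:N-monotone}(i) already gives $\mathcal L(v_t|\gamma)=0$, where $v_t$ is the unconstrained minimizer. It therefore suffices to show $v_t\in\mathrm T_{\mu_t}$ for a.e.\ $t$, or at least for the given $t$ and then, via the bi-Lipschitz maps $u_{t,s}$ from the proof of Theorem~\ref{thm:optimality}, for all $s$. The natural route is to take $v^{n_k}_t=\nabla\varphi^{k}_t\in\mathrm T_{\mu_{t,n_k}}$ and use the energy bound $\int_0^1\|v^{n_k}_t\|^2_{L^2(\mu_{t,n_k})}\d t\le \int\|x-y\|^2\d\gamma_0$. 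Up to a subsequence, $v^{n_k}_t$ then converges in the sense of \cite[Def 5.4.3]{AGS2008} to some field $w_t\in L^2(\mu_t)$. The next step is to apply Lemma~\ref{lem:gradienttangent}: the hypothesis $1/\mu_{t,n_k}\rightharpoonup 1/\mu_t$ in $L^1_{\mathrm{loc}}$ combined with Cauchy--Schwarz, $\int_K|\nabla\varphi^k_t|\le (\int|\nabla\varphi^k_t|^2\mu_{t,n_k})^{1/2}(\int_K \mu_{t,n_k}^{-1})^{1/2}$, gives uniform $W^{1,1}_{\mathrm{loc}}$-type control. From this we get $w_t=\nabla\varphi_t$ locally, so $w_t$ is a gradient in the sense needed in the proof of Theorem~\ref{thm:optimality}. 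Then $w_t$ must coincide with $v_t$, or at least achieve zero loss. The argument is lower semicontinuity of $(\eta,w)\mapsto\int\|w((1-t)x+ty)-(y-x)\|^2\d\eta$ under this joint weak convergence, combined with $\mathcal L(v^{n_k}|\gamma_{n_k})\to0$. The latter holds because the cost decrease across $\mathcal R_p$, namely \cite[Thm 5.3]{L2022} applied with the gradient field, is bounded by the difference of consecutive costs, which tends to $0$.

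Step (c) is direct. The weak limit $1/\mu_{t,n_k}\rightharpoonup 1/\mu_t$ in $L^1_{\mathrm{loc}}$ implies $1/\mu_t\in L^1_{\mathrm{loc}}(\R^d)$, and in particular $\mu_t>0$ a.e. Absolute continuity of $\mu_t$ is part of the convergence statement, or follows from the $N$-monotone structure as in the proof of Theorem~\ref{thm:optimality}. Theorem~\ref{thm:optimality} with $A=\R^d$ then concludes.

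The main obstacle is the passage to the limit in step (b). It has two parts: identifying the limit of the gradient fields as a genuine gradient, which is where the $1/\mu$ convergence enters through Lemma~\ref{lem:gradienttangent}, and ensuring that the straightness loss with respect to this limit field vanishes. That second part needs the lower semicontinuity of the flow matching integrand along the joint convergence $(\gamma_{n_k},v^{n_k})$, rather than of the minimized loss, which the paper shows is not lsc in general.
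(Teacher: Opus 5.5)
Your outline matches the paper's proof: $N$-cyclic monotonicity of $\gamma$ as in Theorem~\ref{thm:monotone_limit}, the gradient structure of the limit field via Lemma~\ref{lem:gradienttangent}, then the argument of Theorem~\ref{thm:optimality}. The gap is the step that identifies the limit $w_t$ of the gradient fields with the unconstrained minimizer $v_t$ of $\mathcal L(\cdot|\gamma)$, which you base on a false semicontinuity claim. The map $(\eta,w)\mapsto\int\|w((1-t)x+ty)-(y-x)\|^2\d\eta$ is \emph{not} lower semicontinuous along $\eta_n\wto\eta$, $w_n\mu_{t,n}\wto w\mu_t$. If it were, you could take $w_n$ to be minimizers and extract a convergent subsequence, and $\eta\mapsto\min_w\mathcal L(w|\eta)$ would then be lower semicontinuous. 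Appendix~\ref{app:not_lsc} disproves exactly that: there $\mathcal L(v_{t,n}|\gamma_n)=0$ and $v_{t,n}\mu_{t,n}\wto v_t\mu_t$, yet $\mathcal L(v_t|\gamma)>0$. So vanishing loss along the iterates does not by itself give a zero-loss limit field. Your proposed distinction between the minimized loss and the ``integrand along the joint convergence'' does not hold.

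What is missing is a \emph{linear} identification of the limit, not a semicontinuity argument. Let $\eta^n=\gamma^{(N)}_n$ be the coupling actually being rectified; along $n_k$ it also converges to $\gamma$ by Proposition~\ref{prop:limits_coincide}. By \eqref{eq:energycontrol} and the remark after it (the projection onto $\mathrm T_{\mu_t}$ only adds a nonnegative term), the vanishing cost decrease forces $\int_0^1\int\|y-x-v^n_t((1-t)x+ty)\|^2\d\eta^n\d t\to0$. Hence $v^n_t\mu^n_t\otimes\d t$ differs by $o(1)$ in total variation from $m^n_t\otimes\d t$, where the momentum measure is defined by $\int\psi\,\d m^n_t=\int\psi((1-t)x+ty)(y-x)\d\eta^n$. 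This momentum is linear in $\eta^n$, so it converges to the momentum of $\gamma$, namely $v_t\mu_t\otimes\d t$. This is Lemma~\ref{lem:gamma_N_equals_gamma_prime}, and it is what the paper means by ``$v_t$ is the weak limit of $v^n_t$''. It gives $w_t=v_t$. The vanishing of $\mathcal L(v_t|\gamma)$ then comes from $N$-cyclic monotonicity through Proposition~\ref{prop:N-monotone}(i), not from passing the loss to the limit. With this replacement, Lemma~\ref{lem:gradienttangent} makes $v_t$ a gradient and the rest of your argument goes through as in the paper.
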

\begin{proof}
As derived above, we know that the limit $\gamma$ is $N$-cyclically monotone and that $v_t\in\argmin_{w_t\in L^2(\mu_t)}\mathcal L(w_t|\gamma)$ is the weak limit of $v_t^n\in\argmin_{w_t\in \mathrm{T}_{\mu_{t,n}}} \mathcal L(w_t|\gamma_n)$. Using Lemma~\ref{lem:gradienttangent} we find that $v_t$
coincides for a.e.~$t$ with the gradient of a function in $W^{1,1}_\mathrm{loc}(\R^d)$. Thus, the proof of Theorem~\ref{thm:optimality} can be reproduced with these assumptions, yielding that $\gamma$ is an optimal transport plan.
\end{proof}

\section{Example and Numerical Illustrations} \label{sec:numerics}

\begin{figure}
\centering
\begin{subfigure}{.25\textwidth}
\resizebox{\textwidth}{!}{
\begin{tikzpicture}[>=stealth, node distance=2cm]
    \def\r{2} 

    \foreach \i in {1,...,5} {
        \node[circle, fill=blue, inner sep=6pt] (N\i)
            at ({90 - (\i-1)*360/5}:\r) {};
    }

    \foreach \i/\j in {1/5, 2/1, 3/2, 4/3, 5/4} {
        \draw[->, thick, shorten >=3pt, shorten <=3pt] (N\i) -- (N\j);
    }
\end{tikzpicture}}
\caption*{$M=5$}
\end{subfigure}
\hspace{2cm}
\begin{subfigure}{.25\textwidth}
\resizebox{\textwidth}{!}{
\begin{tikzpicture}[>=stealth, node distance=2cm]
    \def\r{2} 

    \foreach \i in {1,...,7} {
        \node[circle, fill=blue, inner sep=6pt] (N\i)
            at ({90 - (\i-1)*360/7}:\r) {};
    }

    \foreach \i/\j in {1/7, 2/1, 3/2, 4/3, 5/4, 6/5, 7/6} {
        \draw[->, thick, shorten >=3pt, shorten <=3pt] (N\i) -- (N\j);
    }
\end{tikzpicture}}
\caption*{$M=7$}
\end{subfigure}
\caption{Illustration of the example in Section~\ref{sec:numerics} for $M=5$ and $M=7$ circles. The blue circles represent the measure $\mu_0=\mu_1$ defined in \eqref{eq:drehdich}, and the arrows show the mapping $u$ which carries the coupling $\gamma$. We find that for $M=2N+1$ the coupling $\gamma$ is $N$-, but not $N+1$-cyclically monotone. In particular, the coupling on the left is $2$- but not $3$-cyclically monotone and the coupling on the right is $3$- but not $4$-cyclically monotone.}
\label{fig:drehdich_malen}
\end{figure}

In this section, we provide an example which shows that if the support condition of Theorem~\ref{thm:optimality} is violated, there may  exist again fixed points of $\mathcal R_p\circ \mathcal F_N$ which do not coincide with the optimal transport.

To this end let $\eta$ be an absolutely continuous probability measure with smooth density with support in $B_\epsilon(0)$ and let $\eta_x$ be the translation of $\eta$ by the vector $x$.
Then, for some $M\in \Z_{\geq 5}$, we consider 
\begin{equation}\label{eq:drehdich}
\mu_0=\mu_1=\frac1M\sum_{k=1}^M \eta_{x_k}, \quad x_k=\Big(\cos\Big(\frac{2k\pi}{M}\Big),\sin\Big(\frac{2k\pi}{M}\Big)\Big).
\end{equation}
By definition, the optimal transport plan between $\mu_0$ and $\mu_1$ is carried by the identity map.
Here we consider the plan $(\mathrm{Id},u)_\#\mu_0$ with $u(x)=x+x_{k+1}-x_k$ for all $x\in B_\epsilon(x_k)$, see Figure~\ref{fig:drehdich_malen} for an illustration of $\mu_0=\mu_1$ and the coupling $\gamma$.
Since $u$ is (locally) just a translation, we get that $\gamma$ fulfills $\min_{w_t\in \mathrm{T}_{\mu_t}}\mathcal L(w_t|\gamma)=0$ which implies that $\gamma\in\mathcal R_p(\gamma)$. In addition, we show in Lemma~\ref{lem:drehdich_ist_monoton} of Appendix~\ref{app:wrong_with_disconnected_support} that for $M=2N+1$ the coupling $\gamma$ is $N$-cyclically monotone. By considering the points $x_{2k+1}$ for $k=0,1,...,N$ we can also see that this coupling is not $N+1$-cyclically monotone.

\paragraph{Numerical Simulation}
We illustrate the behavior of reflow with minibatch OT numerically. To this end, we set $\gamma_0$ to the coupling $\gamma$ as described above for $M=5$ and $M=7$ and simulate Algorithm~\ref{alg:minibatch_OT_rec} for different batch sizes $N$ in the minibatch OT.
In Figures~\ref{fig:numerik1} and \ref{fig:numerik2}, we plot samples from $\mu_0$ and $\mu_1$ as well as the trajectories of the flow ODE for the learned velocity fields $v_t^{n}=\argmin_{w_t\in L^2(\mu_{t,n})}\mathcal L(w_t|\gamma_{n})$ for $n=0,...,5$.
As expected we observe for $N=2$ and $M=5$ as well as for $N=3$ and $M=7$ that the coupling $\gamma_0$ is already a fixed point of $\mathcal R\circ \mathcal F_N$. For larger $N$, we can see that the iterates move towards the identity plan which is also the optimal transport. This convergence can be very slow for the minimal batch sizes $N=3$ (for $M=5$) and $N=4$ (for $M=7$) but becomes faster for higher choices of $N$. For very large batch sizes like $N=256$, we arrive close to the optimal coupling after two steps.

\begin{figure}[p]
\begin{figure}[H]
\def\figwidth{0.13}
    \centering

\begin{subfigure}[t]{.02\textwidth}
        \hfill\rotatebox{90}{\scriptsize\hspace{.3cm} $N=2$}
    \end{subfigure}\hfill
    \begin{subfigure}[t]{\figwidth\textwidth}
    \includegraphics[width=\textwidth]{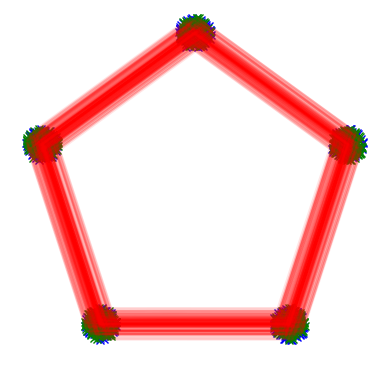}
    \end{subfigure}\hfill
    \begin{subfigure}[t]{\figwidth\textwidth}
    \includegraphics[width=\textwidth]{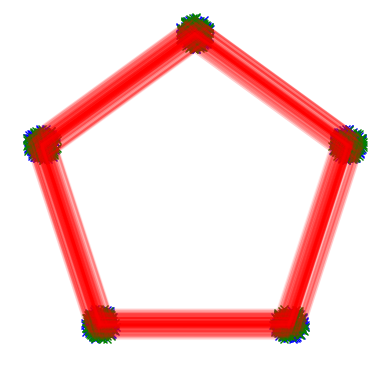}
    \end{subfigure}\hfill
    \begin{subfigure}[t]{\figwidth\textwidth}
    \includegraphics[width=\textwidth]{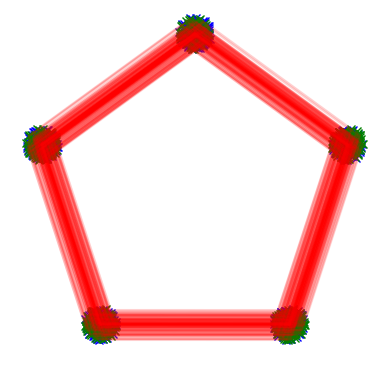}
    \end{subfigure}\hfill
    \begin{subfigure}[t]{\figwidth\textwidth}
    \includegraphics[width=\textwidth]{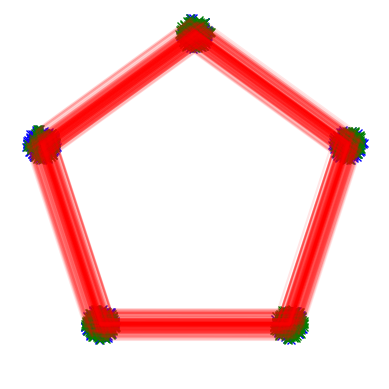}
    \end{subfigure}\hfill
    \begin{subfigure}[t]{\figwidth\textwidth}
    \includegraphics[width=\textwidth]{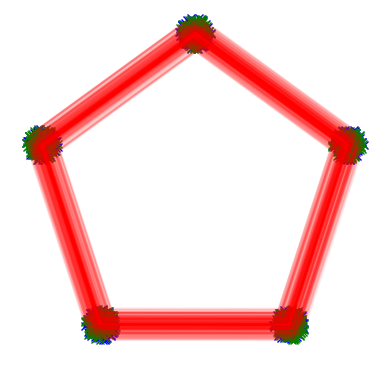}
    \end{subfigure}\hfill
    \begin{subfigure}[t]{\figwidth\textwidth}
    \includegraphics[width=\textwidth]{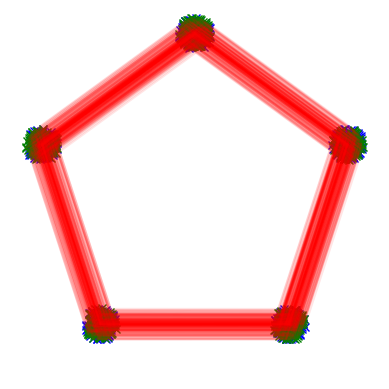}
    \end{subfigure}

\begin{subfigure}[t]{.02\textwidth}
        \hfill\rotatebox{90}{\scriptsize\hspace{.3cm} $N=3$}
    \end{subfigure}\hfill
    \begin{subfigure}[t]{\figwidth\textwidth}
    \includegraphics[width=\textwidth]{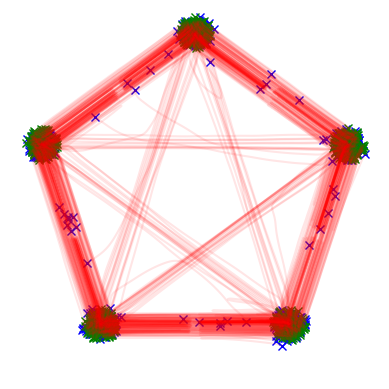}
    \end{subfigure}\hfill
    \begin{subfigure}[t]{\figwidth\textwidth}
    \includegraphics[width=\textwidth]{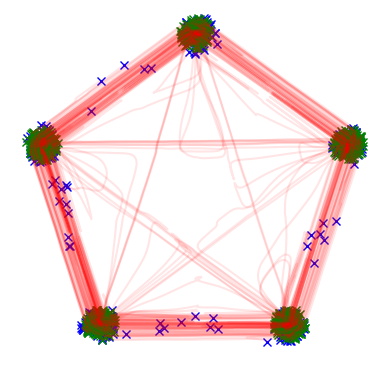}
    \end{subfigure}\hfill
    \begin{subfigure}[t]{\figwidth\textwidth}
    \includegraphics[width=\textwidth]{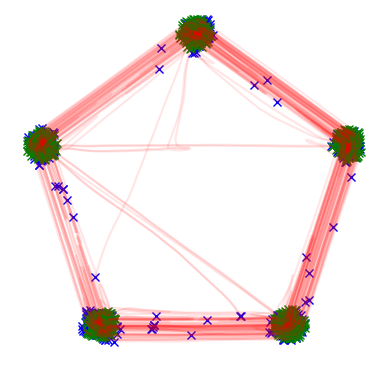}
    \end{subfigure}\hfill
    \begin{subfigure}[t]{\figwidth\textwidth}
    \includegraphics[width=\textwidth]{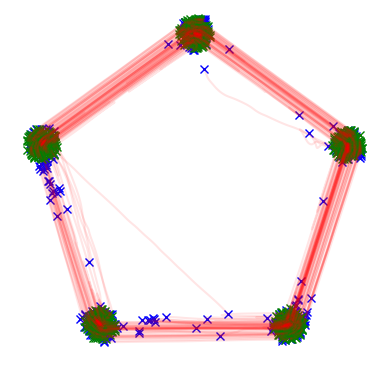}
    \end{subfigure}\hfill
    \begin{subfigure}[t]{\figwidth\textwidth}
    \includegraphics[width=\textwidth]{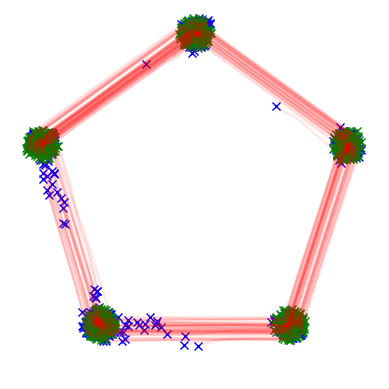}
    \end{subfigure}\hfill
    \begin{subfigure}[t]{\figwidth\textwidth}
    \includegraphics[width=\textwidth]{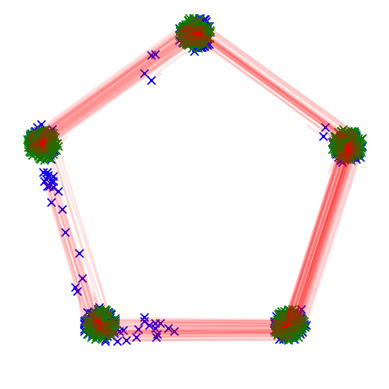}
    \end{subfigure}

        \begin{subfigure}[t]{.02\textwidth}
        \hfill\rotatebox{90}{\scriptsize\hspace{.3cm} $N=8$}
    \end{subfigure}\hfill
    \begin{subfigure}[t]{\figwidth\textwidth}
    \includegraphics[width=\textwidth]{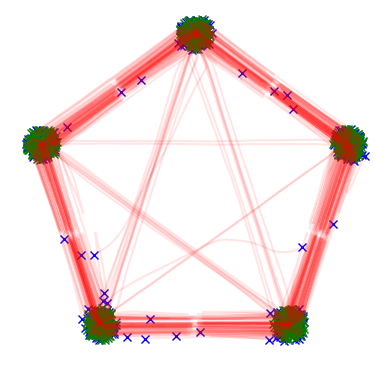}
    \end{subfigure}\hfill
    \begin{subfigure}[t]{\figwidth\textwidth}
    \includegraphics[width=\textwidth]{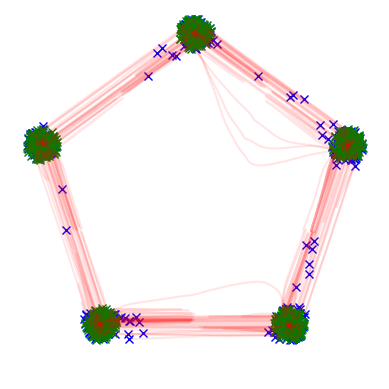}
    \end{subfigure}\hfill
    \begin{subfigure}[t]{\figwidth\textwidth}
    \includegraphics[width=\textwidth]{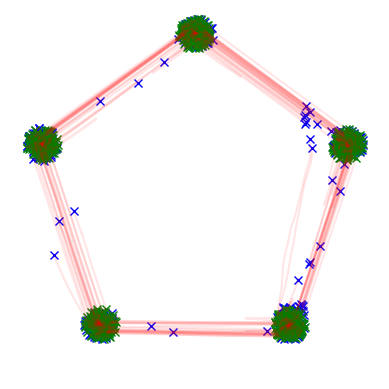}
    \end{subfigure}\hfill
    \begin{subfigure}[t]{\figwidth\textwidth}
    \includegraphics[width=\textwidth]{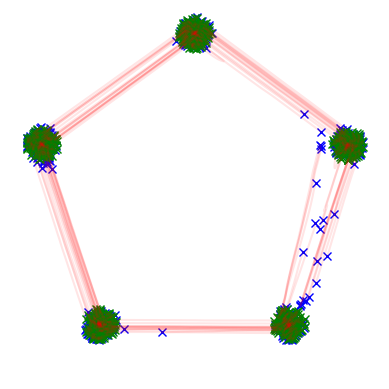}
    \end{subfigure}\hfill
    \begin{subfigure}[t]{\figwidth\textwidth}
    \includegraphics[width=\textwidth]{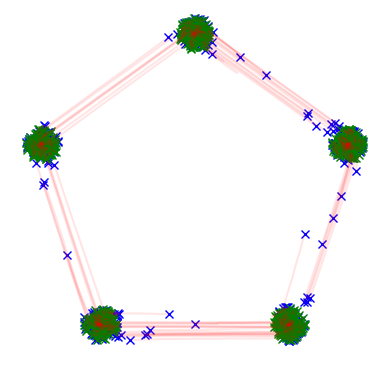}
    \end{subfigure}\hfill
    \begin{subfigure}[t]{\figwidth\textwidth}
    \includegraphics[width=\textwidth]{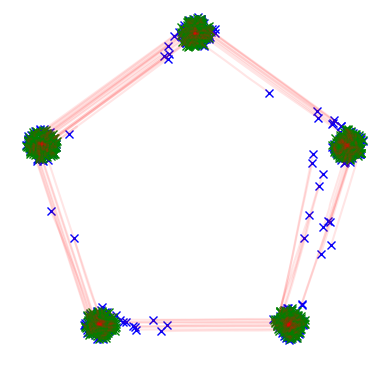}
    \end{subfigure}

    \begin{subfigure}[t]{.02\textwidth}
        \hfill\rotatebox{90}{\scriptsize\hspace{.15cm} $N=256$}
    \end{subfigure}\hfill
    \begin{subfigure}[t]{\figwidth\textwidth}
    \includegraphics[width=\textwidth]{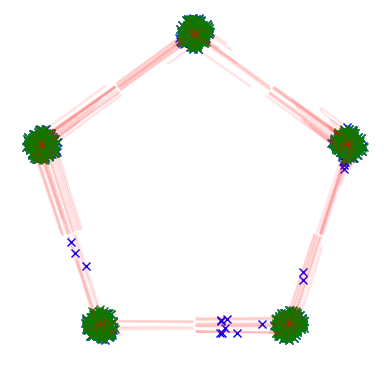}
    \end{subfigure}\hfill
    \begin{subfigure}[t]{\figwidth\textwidth}
    \includegraphics[width=\textwidth]{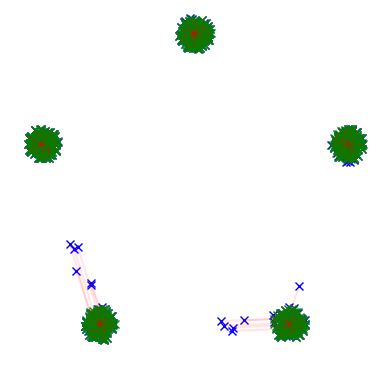}
    \end{subfigure}\hfill
    \begin{subfigure}[t]{\figwidth\textwidth}
    \includegraphics[width=\textwidth]{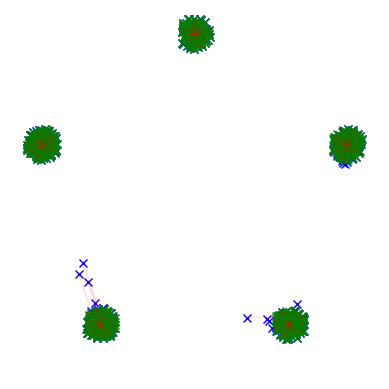}
    \end{subfigure}\hfill
    \begin{subfigure}[t]{\figwidth\textwidth}
    \includegraphics[width=\textwidth]{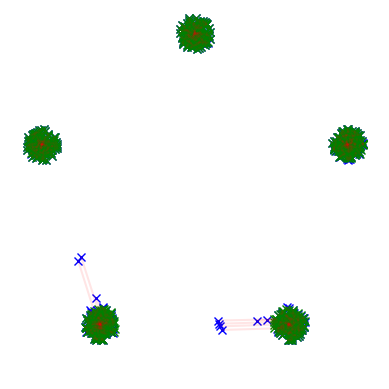}
    \end{subfigure}\hfill
    \begin{subfigure}[t]{\figwidth\textwidth}
    \includegraphics[width=\textwidth]{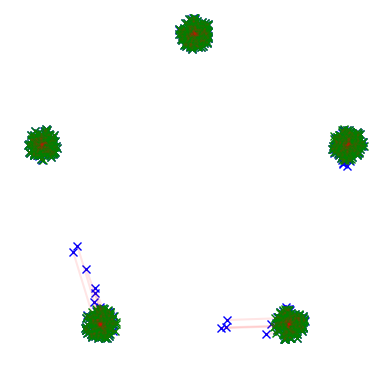}
    \end{subfigure}\hfill
    \begin{subfigure}[t]{\figwidth\textwidth}
    \includegraphics[width=\textwidth]{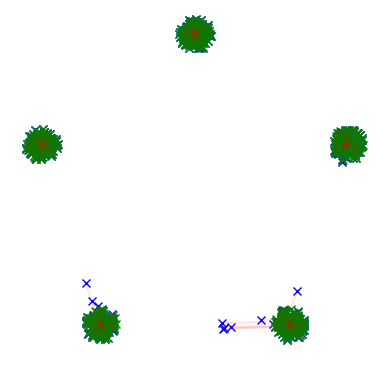}
    \end{subfigure}

\caption{We plot samples from $\mu_0$ and $\mu_1$ as well as trajectories of the flow ODE for $n=0,...,5$ generated by Algorithm~\ref{alg:minibatch_OT_rec} for the coupling $\gamma_0=\gamma$ as described in Section~\ref{sec:numerics} with $M=5$ modes. As expected, $\gamma_0$ is a fixed point of $\mathcal R\circ\mathcal F_N$ for $N=2$, but not for larger $N$.}
\label{fig:numerik1}
\end{figure}

\begin{figure}[H]
\def\figwidth{0.13}
    \centering

\begin{subfigure}[t]{.02\textwidth}
        \hfill\rotatebox{90}{\scriptsize\hspace{.3cm} $N=3$}
    \end{subfigure}\hfill
    \begin{subfigure}[t]{\figwidth\textwidth}
    \includegraphics[width=\textwidth]{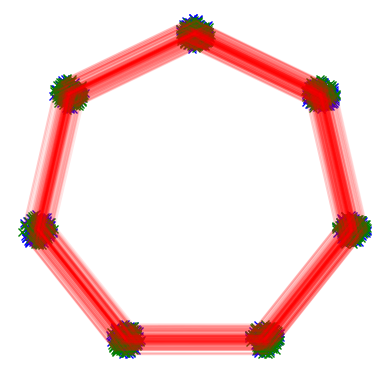}
    \end{subfigure}\hfill
    \begin{subfigure}[t]{\figwidth\textwidth}
    \includegraphics[width=\textwidth]{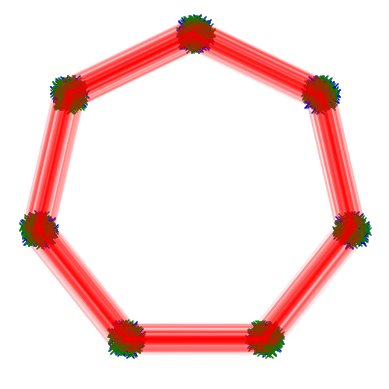}
    \end{subfigure}\hfill
    \begin{subfigure}[t]{\figwidth\textwidth}
    \includegraphics[width=\textwidth]{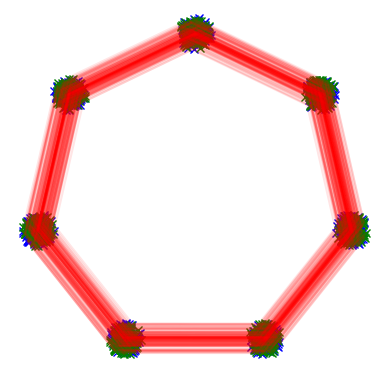}
    \end{subfigure}\hfill
    \begin{subfigure}[t]{\figwidth\textwidth}
    \includegraphics[width=\textwidth]{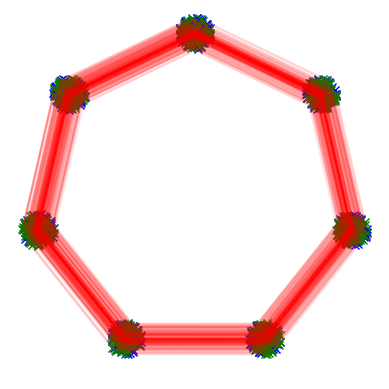}
    \end{subfigure}\hfill
    \begin{subfigure}[t]{\figwidth\textwidth}
    \includegraphics[width=\textwidth]{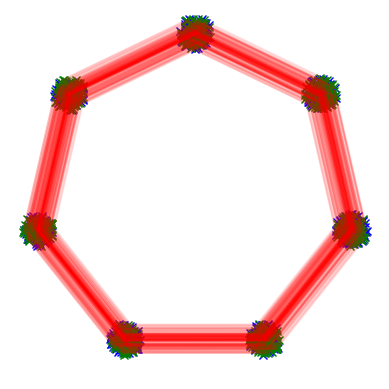}
    \end{subfigure}\hfill
    \begin{subfigure}[t]{\figwidth\textwidth}
    \includegraphics[width=\textwidth]{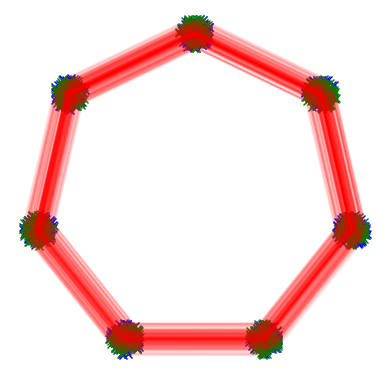}
    \end{subfigure}

    \begin{subfigure}[t]{.02\textwidth}
        \hfill\rotatebox{90}{\scriptsize\hspace{.3cm} $N=4$}
    \end{subfigure}\hfill
    \begin{subfigure}[t]{\figwidth\textwidth}
    \includegraphics[width=\textwidth]{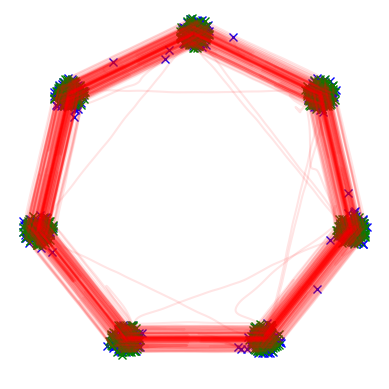}
    \end{subfigure}\hfill
    \begin{subfigure}[t]{\figwidth\textwidth}
    \includegraphics[width=\textwidth]{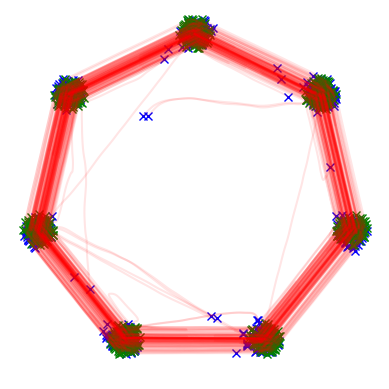}
    \end{subfigure}\hfill
    \begin{subfigure}[t]{\figwidth\textwidth}
    \includegraphics[width=\textwidth]{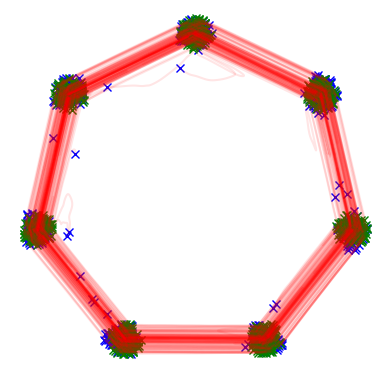}
    \end{subfigure}\hfill
    \begin{subfigure}[t]{\figwidth\textwidth}
    \includegraphics[width=\textwidth]{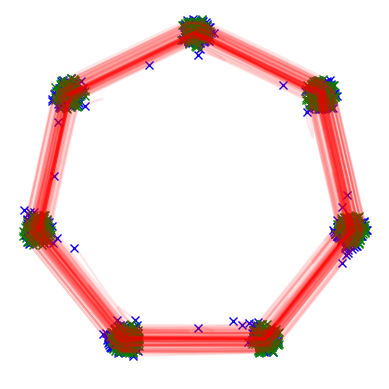}
    \end{subfigure}\hfill
    \begin{subfigure}[t]{\figwidth\textwidth}
    \includegraphics[width=\textwidth]{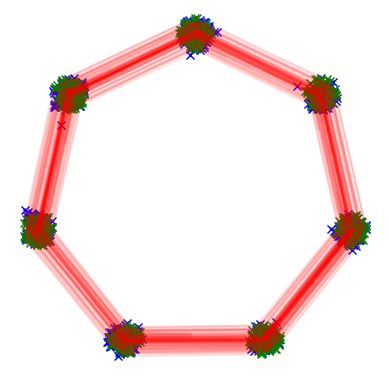}
    \end{subfigure}\hfill
    \begin{subfigure}[t]{\figwidth\textwidth}
    \includegraphics[width=\textwidth]{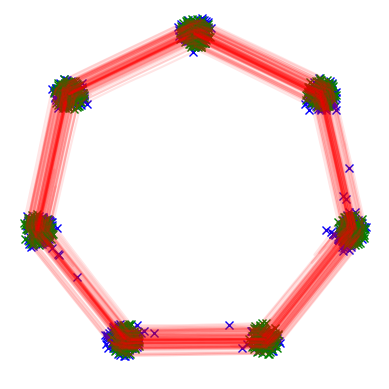}
    \end{subfigure}

        \begin{subfigure}[t]{.02\textwidth}
        \hfill\rotatebox{90}{\scriptsize\hspace{.3cm} $N=8$}
    \end{subfigure}\hfill
    \begin{subfigure}[t]{\figwidth\textwidth}
    \includegraphics[width=\textwidth]{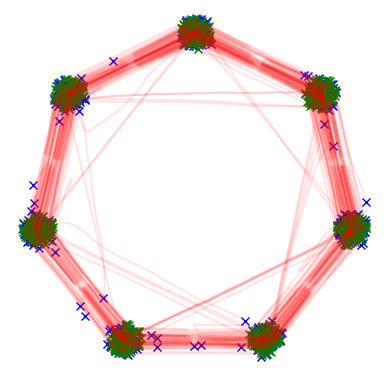}
    \end{subfigure}\hfill
    \begin{subfigure}[t]{\figwidth\textwidth}
    \includegraphics[width=\textwidth]{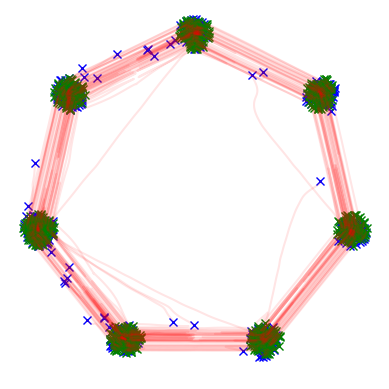}
    \end{subfigure}\hfill
    \begin{subfigure}[t]{\figwidth\textwidth}
    \includegraphics[width=\textwidth]{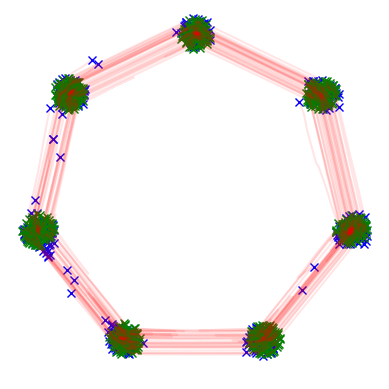}
    \end{subfigure}\hfill
    \begin{subfigure}[t]{\figwidth\textwidth}
    \includegraphics[width=\textwidth]{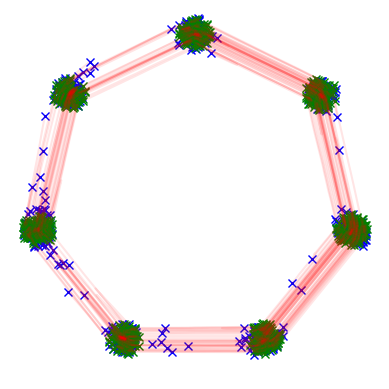}
    \end{subfigure}\hfill
    \begin{subfigure}[t]{\figwidth\textwidth}
    \includegraphics[width=\textwidth]{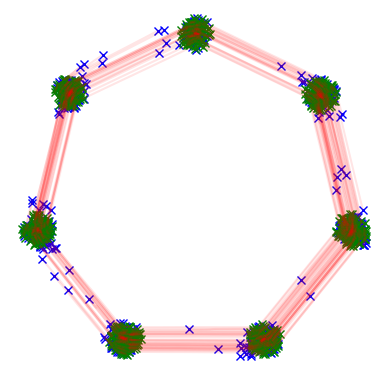}
    \end{subfigure}\hfill
    \begin{subfigure}[t]{\figwidth\textwidth}
    \includegraphics[width=\textwidth]{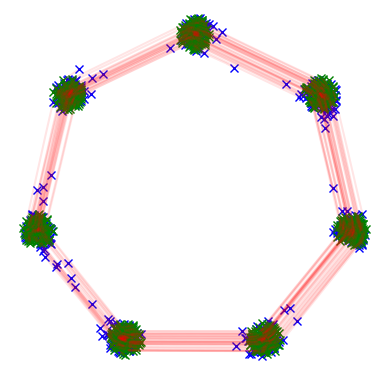}
    \end{subfigure}

    \begin{subfigure}[t]{.02\textwidth}
        \hfill\rotatebox{90}{\scriptsize\hspace{.15cm} $N=256$}
    \end{subfigure}\hfill
    \begin{subfigure}[t]{\figwidth\textwidth}
    \includegraphics[width=\textwidth]{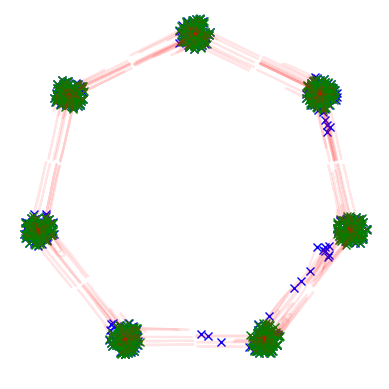}
    \end{subfigure}\hfill
    \begin{subfigure}[t]{\figwidth\textwidth}
    \includegraphics[width=\textwidth]{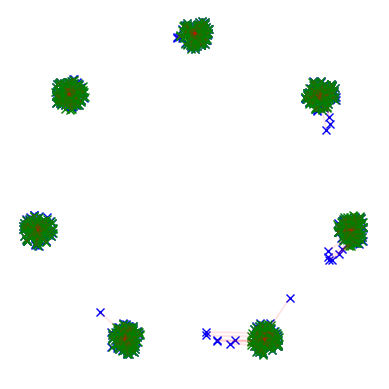}
    \end{subfigure}\hfill
    \begin{subfigure}[t]{\figwidth\textwidth}
    \includegraphics[width=\textwidth]{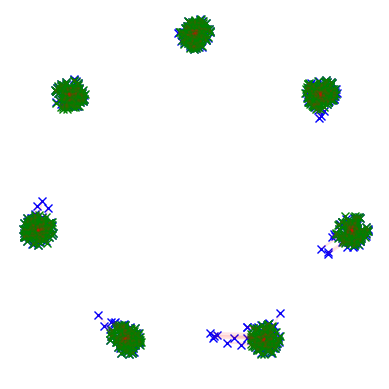}
    \end{subfigure}\hfill
    \begin{subfigure}[t]{\figwidth\textwidth}
    \includegraphics[width=\textwidth]{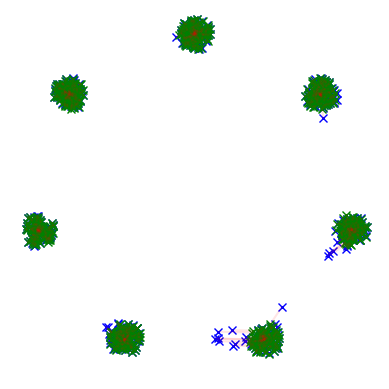}
    \end{subfigure}\hfill
    \begin{subfigure}[t]{\figwidth\textwidth}
    \includegraphics[width=\textwidth]{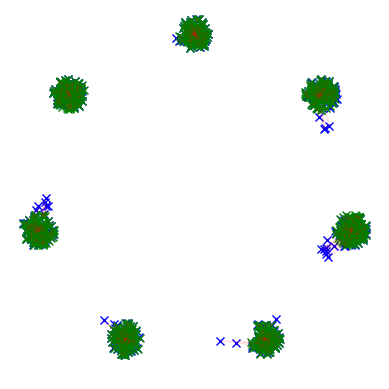}
    \end{subfigure}\hfill
    \begin{subfigure}[t]{\figwidth\textwidth}
    \includegraphics[width=\textwidth]{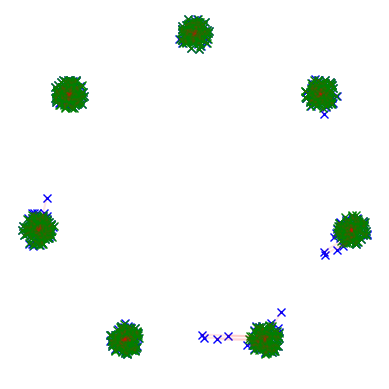}
    \end{subfigure}

\caption{We plot samples from $\mu_0$ and $\mu_1$ as well as trajectories of the flow ODE for $n=0,...,5$ generated by Algorithm~\ref{alg:minibatch_OT_rec} for the coupling $\gamma_0=\gamma$ as described in Section~\ref{sec:numerics} with $M=7$ modes. As expected, $\gamma_0$ is a fixed point of $\mathcal R\circ\mathcal F_N$ for $N=3$, but not for larger $N$. For $N=4$, the coupling changes very slowly, but $\gamma_0$ is not exactly a fixed point.}
\label{fig:numerik2}
\end{figure}
\end{figure}

\section{Conclusions}\label{sec:conclusions}

We investigated reflow and its limit points. First, we built weak rectified couplings to guarantee that the corresponding iteration is well-defined at every step.  
In combination with minibatch OT with batch size $N$ at each iteration, the limit points of reflow become $N$-cyclically monotone, which ensures both straightness and rectifiability (assuming $\mu_0$ is absolutely continuous).  
Further, we examined reflow with minibatch OT, where the velocities are additionally constrained to be gradients. With an additional support assumption, this procedure yields the optimal transport plan as the unique limit point. However, verifying this support condition is likely to be infeasible in practice and if it fails, the equivalence with optimal transport may no longer hold.

\paragraph{Open Questions, Future Work and Limitations}

In this work, we always assume that we find the exact minimizer of the flow matching loss \eqref{eq:flow_matching_loss}. A thorough investigation of how the results are influenced by approximation errors, introduced via the neural network parametrization and optimization, is left to future work. Further, we considered the reflow iterations starting from an arbitrary initial coupling $\gamma_0$. In most practical applications, this initial coupling is chosen as $\gamma_0=\mu_0\otimes\mu_1$ in which case the reflow might admit more regularity.\smallskip

Considering our specific analysis, there remain some questions. For reflow with minibatch OT, we characterized limit points and showed that they admit certain desirable properties. However, while it is clear from compactness that such limit points always exist, it is less clear whether they are unique or not. In particular for the case with minibatch OT Proposition~\ref{prop:limits_coincide} suggests that limit points might be unique in many cases, even though it does not imply this result yet.\smallskip

Finally, for reflow with minibatch OT and gradient constraint, it remains an open problem to find practically verifiable sufficient conditions that guarantee the support condition always holds. As a first step, one could attempt to generalize the results of \cite{Caffarelli97} to show that $N$-monotone plans become sufficiently regular when $N$ is large. However, this leads to involved analytical difficulties that lie beyond the scope of the present work.

\appendix

\section{A Remark on the Tangent Space}\label{sec:gradtan}

We give a simple condition under which elements of the Wasserstein tangent space
to a measure are gradients.

\begin{lemma} \label{lem:gradienttangent}
Let $B$ be an open ball, $\mu \in L^1(B)$ non-negative and
  assume $1/\mu \in L^1(B)$. Then any $v\in \mathrm{T}_{\mu}$ is a gradient in $B$.
  In addition, consider a sequence $(\mu_n)_{n\ge 1}$ of non-negative
  functions, with both $\mu_n\wto \mu$  and
$1/\mu_n\wto 1/\mu$ weakly
  in $L^1(B)$, as $n\to\infty$. 
  Then given for each $n\ge 1$, $v_n\in \mathrm{T}_{\mu_n}$ with $\sup_{n}\int |v_n|^2\mu_n\,dx<+\infty$,  up to subsequences,
  there exists 
  $\varphi  \in W^{1,1}(B)$ such that
  $v_n\stackrel{n\to\infty}{\rightharpoonup} D\varphi$ in $L^1(B;\R^d)$
  and $v_n\mu_n\wtos D\varphi\mu$  (weak-$*$ as measures).
\end{lemma}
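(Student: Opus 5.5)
The key tool is Cauchy--Schwarz with the weight $\mu$. For any vector field $w$ on $B$,
\[
\int_B |w|\,dx = \int_B |w|\sqrt{\mu}\,\frac{1}{\sqrt{\mu}}\,dx \le \Big(\int_B |w|^2\mu\,dx\Big)^{1/2}\Big(\int_B \frac{1}{\mu}\,dx\Big)^{1/2}.
\]
So the embedding $L^2(\mu;\R^d)\hookrightarrow L^1(B;\R^d)$ is continuous as soon as $1/\mu\in L^1(B)$.

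For the first claim, take $v\in \mathrm{T}_\mu$ and choose $\varphi_k\in C_c^\infty(\R^d)$ with $\nabla\varphi_k\to v$ in $L^2(\mu)$. By the inequality above, $\nabla\varphi_k\to v$ in $L^1(B)$. Subtract the mean over $B$ from each $\varphi_k$. The Poincaré--Wirtinger inequality on the ball, $\|\varphi-\bar\varphi\|_{L^1(B)}\le C\|\nabla\varphi\|_{L^1(B)}$, then shows that $(\varphi_k-\bar\varphi_k)$ is Cauchy in $W^{1,1}(B)$. Its limit $\varphi\in W^{1,1}(B)$ satisfies $D\varphi=v$ a.e. in $B$, so $v$ is a gradient in $B$.

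For the sequence statement, apply the same inequality to $v_n$ with weight $\mu_n$:
\[
\|v_n\|_{L^1(B)}\le C_0\,\|1/\mu_n\|_{L^1(B)}^{1/2},\qquad C_0:=\sup_n\Big(\int_B|v_n|^2\mu_n\,dx\Big)^{1/2}<\infty.
\]
Boundedness in $L^1$ alone does not give weak $L^1$ compactness, so I need equi-integrability. For a measurable $E\subset B$,
\[
\int_E|v_n|\,dx\le C_0\Big(\int_E 1/\mu_n\,dx\Big)^{1/2}.
\]
Since $1/\mu_n\wto 1/\mu$ weakly in $L^1$, Dunford--Pettis says the family $(1/\mu_n)$ is equi-integrable. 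The display above then transfers equi-integrability to $(v_n)$. Dunford--Pettis again gives a subsequence with $v_n\wto v$ weakly in $L^1(B;\R^d)$.

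To see that $v$ is a gradient, use that each $v_n$ is a gradient on $B$ by the first part, $v_n=D\psi_n$ with $\psi_n\in W^{1,1}(B)$ of zero mean. The gradients are characterised by vanishing curl in distributions, $\int_B v_n^i\,\partial_j\zeta - v_n^j\,\partial_i\zeta\,dx=0$ for $\zeta\in C_c^\infty(B)$, and this identity passes to weak $L^1$ limits. Hence $v=D\varphi$ for some $\varphi$ with $D\varphi\in L^1(B)$, and $\varphi\in W^{1,1}(B)$ by Poincaré on the ball. Alternatively, $\psi_n$ is bounded in $W^{1,1}$ and converges strongly in $L^1$ to some $\varphi$, and weak-limit uniqueness gives $D\varphi=v$.

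It remains to identify the limit of $v_n\mu_n$. The sequence $v_n\mu_n$ is bounded in total variation, because
\[
\int_B|v_n|\mu_n\,dx\le C_0\Big(\int_B\mu_n\,dx\Big)^{1/2},
\]
and $\mu_n$ is bounded in $L^1$. So up to a subsequence $v_n\mu_n\wtos m$ for some measure $m$, and I must show $m=D\varphi\,\mu$. The product of two weakly converging sequences is the delicate point. Instead of a product argument, I would use weighted lower semicontinuity. The functional $(\nu,m)\mapsto\int |dm/d\nu|^2\,d\nu$ is jointly lsc for weak-$*$ convergence (the Benamou--Brenier convexity argument, \cite[Lem 9.4.3]{AGS2008}). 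This yields $m=w\mu$ with $w\in L^2(\mu)$. Then I identify $w=v$ by testing against $\zeta$:
\[
\int_B\zeta\cdot v_n\,dx=\int_B\zeta\cdot v_n\mu_n\cdot\frac{1}{\mu_n}\,dx.
\]
I expect this step to be the main obstacle, since it again pairs two weakly converging factors. The way I would try to get around it is a convexity/duality argument. The function $(a,b,c)\mapsto |a|^2/b$ is convex, as is the pairing structure of $(v\sqrt{\mu},\sqrt{\mu},1/\sqrt{\mu})$. Using Jensen-type equality cases, together with the joint convergence of $\mu_n$ and $1/\mu_n$, which forces $\mu_n\to\mu$ in measure and hence a.e. along a subsequence, I would conclude $w=v$. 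The a.e. convergence comes from the equality case in $\mu_n\cdot(1/\mu_n)=1$: the weak limit of the product equals the product of the weak limits only when there is no oscillation. Once $\mu_n\to\mu$ a.e., Egorov's theorem lets me pass to the limit in $v_n\mu_n=v_n\cdot\mu_n$ on sets where $\mu_n$ converges uniformly and stays bounded. The remaining small sets are controlled by equi-integrability, which gives $m=D\varphi\,\mu$.
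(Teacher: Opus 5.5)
Your proof is correct. It shares the paper's skeleton: weighted Cauchy--Schwarz to embed $L^2(\mu)$ into $L^1(B)$, Poincar\'e--Wirtinger for the first claim, Dunford--Pettis for compactness, and the fact that weak convergence of both $\mu_n$ and $1/\mu_n$ upgrades to strong convergence. Two steps, however, are done differently.

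\emph{Equi-integrability of $v_n$.} You localise Cauchy--Schwarz, $\int_E|v_n|\,dx\le C_0(\int_E 1/\mu_n\,dx)^{1/2}$. The paper instead takes a de la Vall\'ee-Poussin function $\Psi$ for $(1/\mu_n)$, passes to $\tilde\Psi(t)=t\Psi(t)$ and its conjugate, and bounds $\int_B\Phi(|D\varphi_n|)\,dx$ for an explicitly constructed superlinear $\Phi$. Your argument uses the same hypothesis (equi-integrability of $1/\mu_n$) and is much shorter.

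\emph{Limit of $v_n\mu_n$.} The paper shows $\sqrt{\mu_n}\to\sqrt{\mu}$ and $1/\sqrt{\mu_n}\to 1/\sqrt{\mu}$ strongly in $L^2(B)$. It then pairs the weakly convergent $v_n\sqrt{\mu_n}$ against $\psi/\sqrt{\mu_n}$ and $\psi\sqrt{\mu_n}$. This also yields $v_n\sqrt{\mu_n}\rightharpoonup\sqrt{\mu}\,D\varphi$ in $L^2$, and hence the energy bound on the limit, for free. You instead use a.e.\ convergence of $\mu_n$ plus Egorov on $\{\mu\le M\}$, and control the bad set via $\int_E|v_n|\mu_n\,dx\le C_0(\int_E\mu_n\,dx)^{1/2}$ together with equi-integrability of $\mu_n$. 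That works and identifies the limit as $v\mu$ directly. Your Benamou--Brenier lower semicontinuity detour is therefore unnecessary: the integrability $v\mu\in L^1(B)$ needed for the tail follows by testing on the good sets and letting them exhaust $B$.

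\emph{Convergence in measure needs tightening.} The slogan that a weak limit of a product equals the product of the weak limits only in the absence of oscillation is false in general (take $\sin(nx)$ times $1$). What does the work is strict convexity of $t\mapsto 1/t$. You can use this either through Young measures and the Jensen equality case, or, as the paper does, through the identity $\int_{\{\mu\le M\}}\frac{(\mu_n-\mu)^2}{\mu_n}\,dx=\int_{\{\mu\le M\}}\big(\mu_n-2\mu+\frac{\mu^2}{\mu_n}\big)\,dx\to 0$. This holds because $\mu^2\chi_{\{\mu\le M\}}\in L^\infty$. Cauchy--Schwarz then gives $\int_{\{\mu\le M\}}|\mu_n-\mu|\,dx\to0$, which together with $|\{\mu>M\}|\to 0$ is the convergence in measure you need.
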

\begin{remark}
It is not clear that the assumption of the second part are enough to
ensure that $D\varphi \in \mathrm{T}_\mu$ (in $B$), that is, that
it is also the strong $L^2(\mu)$ limit of gradients of smooth functions.
\end{remark}
\begin{proof}
 First, observe that by Cauchy-Schwartz, $L^2(B,\mu)\subset L^1(B)$,
 with continuous inclusion: if $v\in L^2(B,\mu;\R^d)$,
 \[
 \|v\|_{L^1(B)}=\int_B |v|\,dx \le \left( \int_B |v|^2\mu\,dx
 \int \frac{dx}{\mu} \right)^{\frac{1}{2}} = \sqrt{\int \frac{dx}{\mu}}\|v\|_{L^2(\mu)}.
 \]
 Then, if $v\in \mathrm{T}_\mu$ there exists $\varphi_n\in C_c^\infty(B)$ such that $D\varphi_n\to v$ in $L^2(B,\mu; \R^d)$, which implies that $D\varphi_n\to v $ in $L^1(B;\R^d)$. In particular (possibly
 replacing $\varphi_n$ with $\varphi_n-(1/|B|)\int_B\varphi_n \,dx$)
 there exists $\varphi\in W^{1,1}(B)$ with $\varphi_{n}\to\varphi$
 and $v=D\varphi$.\smallskip

Now, assume $v_n \in \mathrm{T}_{\mu_n}$ with $\mu_n\wto \mu$
in $L^1(B)$ and $\sup_n \int_B |v_n|^2 \mu_n \,dx<+\infty$. Observe that as soon as $\sup_n\int 1/\mu_n \,dx<+\infty$,
then $\mu_n = D\varphi_n$ for some $\varphi_n\in W^{1,1}(B)$ (uniformly
bounded) and
in addition, $\int_B 1/\mu\, dx\le \liminf_n \int_B 1/\mu_n\,dx<+\infty$.
Yet, this allows only, it seems, to assert that $(\varphi_n)_n$ is
precompact in $BV(B)$, and it is unclear that weak-$*$ limits of $D\varphi_n \mu_n$ (as measures) are of the form $v\mu$ with
either $v\in \mathrm{T}_\mu$ (which by the previous result would
imply that it is a gradient) or $v$ is related to the limit points of $D\varphi_n$.

If we assume additionally that $1/\mu_n$ is weakly precompact in 
$L^1$, we can show that also $D\varphi_n$ is. Indeed,
there exists $\Psi:\R_+\to \R_+$
  a (smooth), convex, increasing and superlinear function such that
  \begin{equation}
  \label{eq:Psibound}
   \sup_{n\ge 1} \int_B \Psi\left(\frac{1}{\mu_n}\right) dx <+\infty,
  \end{equation}
  (see for instance~\cite[Prop.~1.27]{AmbrosioFuscoPallara}).
  Possibly substituting $\Psi$ with $\Psi-\Psi(0)$ we  may assume $\min\Psi=0$.
  Letting $\tilde\Psi(t):=t\Psi(t)$,
  one finds a convex increasing function such that
  $\lim_{t\to+\infty}\tilde\Psi(t)/t^2=+\infty$.
  In addition, $\min \tilde\Psi=0$, $\tilde\Psi'(0)=0$, $\tilde\Psi'(t)=
  t\Psi'(t)+\Psi(t)>0$ for $t>0$. It follows that
  $\tilde\Psi^*\ge 0$, $\tilde\Psi^*(0) = 0$ and $\tilde\Psi^*(s)>0$ for $s>0$.
  
  If $\Phi$ is a nonnegative function, one writes:
  \[
    \int_B \Phi(|D\varphi_n|)dx =     \int_B \Phi(|D\varphi_n|)\frac{\mu_n}{\mu_n}dx
    \le \int_B \tilde\Psi^*\left(\Phi(|D\varphi_n|)\right)\mu_n \,dx
    + \int_B \tilde\Psi\left(\frac{1}{\mu_n}\right) \mu_n \,dx,
  \]
  where the latter integral is precisely $\int_B \Psi(1/\mu_n)\,dx$,
  bounded by~\eqref{eq:Psibound}.
  We define $\Phi$ so that for any $t\ge 0$,
  \[
    \tilde\Psi^*(\Phi(t)) = t^2 \Leftrightarrow \Phi(t) = (\tilde\Psi^*)^{-1}(t^2),
  \]
  then $\Phi$ is positive for positive $t$, increasing and goes to $+\infty$ at infinity.
  In this way,
  \[
    \int_B \Phi(|D\varphi_n|)dx \le
    \int_B |D\varphi_n|^2\mu_n \,dx
    + \int_B \Psi\left(\frac{1}{\mu_n}\right) dx 
  \]
  is uniformly bounded.
  Since $\lim_{t\to\infty}\tilde\Psi(t)/t^2=+\infty$,
  $\lim_{s\to\infty}\tilde\Psi^*(s)/s^2= 0$. Indeed for any
  $M>0$ there is $t_0$ such that for all $t$,
  $\tilde\Psi(t)\ge M t^2\chi_{[t_0,+\infty)}(t)$, and
  \[
    \tilde\Psi^*(s) \le \max\left\{ s t_0, \sup_{t\ge t_0} ts - Mt^2 \right\}
    \le  s t_0 + \frac{s^2}{4M},
  \]
  so that $\limsup_{s\to\infty} \tilde\Psi^*(s)/s^2\le 1/(4M)$. It follows that
  \[
    \lim_{t\to +\infty} \frac{\tilde\Psi^*(\Phi(t))}{\Phi(t)^2} =
    0 = \lim_{t\to+\infty} \frac{t^2}{\Phi(t)^2} ,
  \]
  showing that $\Phi$ is superlinear. Therefore
  thanks to Dunford-Pettis' theorem, $D\varphi_n$ is precompact in
  $L^1(B;\R^d)$ (and, up to constants, $\varphi_n$ is precompact in
  $W^{1,1}(B)$, not just in $BV(B)$). Consider $\varphi\in W^{1,1}(B)$
and a subsequence $\varphi_{n_k}$ such that $D\varphi_{n_k}\wto D\varphi$
as $k\to+\infty$, weakly in $L^1(B;\R^d)$.  Unfortunately,
 it remains unclear that, without further assumption,
    one can show that $D\varphi_n \mu_n \wtos D\varphi\mu$ as measures.
 
    With the additional assumption that  $1/\mu_n \wto 1/\mu$ in $L^1(B)$,
the situation is much simpler. Indeed, this assumption is
quite restrictive: assuming both $\mu_n\wto\mu$ and $1/\mu_n\wto 1/\mu$ in $L^1$ yields \textit{strong} convergence (this holds in fact
  as soon as $f(\mu_n)\wto f(\mu)$ for a strictly convex $f$ and is classically shown
  using Young measure arguments). In our case, an
  elementary way to see it\footnote{suggested by \texttt{claude.ai}, ``Sonnet 5''.} is
  to write, for any $M>0$:
  \[
  \int_{B\cap \{\mu\le M\}} \frac{(\mu_n-\mu)^2}{\mu_n} \,dx =
  \int_{B\cap \{\mu\le M\}} \mu_n -2\mu  + \frac{\mu^2}{\mu_n}\,dx \stackrel{n\to\infty}{\longrightarrow} 0
  \]
    thanks to our assumptions.
    In particular, 
    \[\int_{B\cap \{\mu\le M\}} |\mu_n-\mu| dx \le \left(\int_B \mu_n \,dx \int_{B\cap \{\mu\le M\}}\frac{|\mu_n-\mu|^2}{\mu_n} \,dx\right)^{1/2} \to 0.\]
    Since by equiintegrability,
    \[ \limsup_{M\to\infty}\sup_{n\ge 1}\int_{B\cap \{\mu> M\}} |\mu_n-\mu| \,dx
    \le 
    \limsup_{M\to\infty}\sup_{n\ge 1}\int_{B\cap \{\mu> M\}} \mu_n+\mu\,dx
    = 0,
    \]
    it follows that $\mu_n\to\mu$ in $L^1(B)$. Then,
    using that for $a,b\ge 0$, $|\sqrt{a}-\sqrt{b}|^2\le |a-b|$,
    \[
    \int_B (\sqrt{\mu_n}-\sqrt{\mu})^2 \,dx \le  \int_B |\mu_n-\mu|\,dx\to 0
    \]
Exchanging the roles of $\mu_n$ and $1/\mu_n$ we get that also $1/\mu_n\to 1/\mu$ in $L^1(B)$, and
$1/\sqrt{\mu_n}\to 1/\sqrt{\mu}$ in $L^2(B)$.

 Using that $v_n\sqrt{\mu_n}$ is bounded in $L^2(B)$, we 
 extract a further subsequence (not relabelled) such that
 $v_{n_k}\sqrt{\mu_{n_k}}$ converges weakly to some limit $w$
  in $L^2(B;\R^d)$.
  Given $\psi\in C^1_c(B;\R^d)$,
  \[
  \lim_{k\to\infty }
  \int_B v_{n_k}\sqrt{\mu_{n_k}} \cdot\frac{\psi}{\sqrt{\mu_{n_k}}}  \,dx
  = \int_B w\cdot \frac{\psi}{\sqrt{\mu}} \,dx
  =  \int_B D\varphi\cdot\psi \,dx.
  \]
The first limit is by strong convergence
  of $\psi/\sqrt{\mu_{n_k}}\to \psi/\sqrt{\mu}$ and the
  second because $v_{n_k}\wto D\varphi$ in $L^1(B;\R^d)$.
  This shows that $w=\sqrt{\mu}D\varphi$.
  On the other hand, using now the strong convergence 
  $\psi\sqrt{\mu_{n_k}}\to \psi\sqrt{\mu}$ in $L^2(B;\R^d)$,
  we obtain:
  \[
\lim_{k\to\infty}  \int_B v_{n_k}\sqrt{\mu_{n_k}} \cdot {\psi}{\sqrt{\mu_{n_k}}}  \,dx = \int_B w \cdot \psi \sqrt{\mu} \,dx
= \int_B \psi\cdot D\varphi\mu \,dx .
  \]
  showing that  $v_{n_k}\mu_{n_k} \wtos D\varphi \mu$ as measures.
  This shows the lemma.
  \end{proof}

\section{Examples for Section~\ref{sec:smirnov}}

In the following, we include two examples corresponding to Section~\ref{sec:smirnov}. The first one in Appendix~\ref{app:non_uniqueness_rect} shows that weak rectified couplings are in general not unique. The second one in Appendix~\ref{app:not_lsc} shows that the flow matching loss $\gamma\mapsto\inf_{w_t\in L^2(\mu_t)}\mathcal L(w_t|\gamma)$ is not lower semicontinuous with respect to weak convergence.

\subsection{Non-uniqueness of Weak Rectified Couplings}\label{app:non_uniqueness_rect}

We have seen in Section~\ref{subsec:weak_rec} that if the flow ODE has a unique solution, then
the weak rectified coupling is unique and coincides with the strong rectified coupling.
However, if the strong rectified coupling does not exist, the weak rectified coupling might be non-unique. We
demonstrate this property using the example from \cite[Section 4.2]{HCD2025}. To this end, let $\mu_0=\mu_1=\mathcal N(0,I)$ (in $\R^d$)
and define $\gamma$ by
$$
\int \psi(x,y)\d\gamma(x,y)=\int \psi(x,-x)\d\mu_0(x).
$$
Then, by \cite[Prop 13]{HCD2025} the velocity field $v_t=\argmin_{w_t\in L^2(\mu_t)}\mathcal L(w_t|\gamma)$ is given by $v_t(x)=-\frac{2}{1-2t}x$.
We define the curve 
$$
\lambda_{x,y}(t)=\begin{cases}(1-2t)x,&\text{for }t\leq\frac12,\\(2t-1)y,&\text{else,}\end{cases}
$$
which connects the points $x$ at time $0$, $0$ at time $\frac12$ and $y$ at time $1$ with straight lines.
Then, we can show that for any coupling $\tilde\gamma\in\Gamma(\mu_0,\mu_1)$ the measure $\Lambda_{\tilde\gamma}$ defined by 
$$
\Lambda_{\tilde\gamma}(A)=\tilde\gamma\left(\left\{(x,y)\in\R^d\times\R^d:\lambda_{x,y}\in A \right\}\right)
$$
fulfills \eqref{eq:Lambda1} and \eqref{eq:Lambda2}.
Since
$$
\int\psi(x,y)\d\tilde\gamma(x,y)=\int \psi(\lambda(0),\lambda(1))\d\Lambda_{\tilde \gamma}(\lambda)
$$
we get that $\tilde\gamma\in\mathcal R(\gamma)$. In particular, in this example, we have that $\mathcal R(\gamma)=\Gamma(\mu_0,\mu_1)$.
\smallskip

We note that this example also provides a limit point of reflow, which is straight, but not rectifiable as it fulfills $\gamma\in\mathcal R(\gamma)$. As pointed out in \cite[Prop 13]{HCD2025}, the velocity field is also a gradient, so that all these conclusions also hold for $\mathcal R_p$ instead of $\mathcal R$.

\subsection{Non-lower-semicontinuity of the Flow Matching Loss}\label{app:not_lsc}

We construct a sequence $(\gamma_n)_n$ of couplings $\gamma_n\in\mathcal P_2(\R^d\times\R^d)$ converging (weakly) to some
$\gamma$ such that
$$
\min_{w_t\in L^2(\mu_{t,n})} \mathcal L(w_t|\gamma_n)=0,\quad\text{and}\quad\min_{w_t\in L^2(\mu_{t})} \mathcal L(w_t|\gamma)>0.
$$
This implies in particular that $\gamma\mapsto \min_{w_t\in L^2(\mu_{t})} \mathcal L(w_t|\gamma)$ is not lsc with respect to weak convergence.
We define $\gamma_n$ and $\gamma$ on $\R^d\times \R^d$ for $d=2$ by
$$
\gamma_n=\tfrac{1}{\sqrt2}\Haus^1\mres \bigcup_{x\in[\frac12,1]} (0,x,\tfrac{1}{n}, x-1) + \tfrac{1}{\sqrt2}\Haus^1\mres \bigcup_{x\in[\frac12,1]} (0,x-1, -\tfrac{1}{n}, x)
$$
and 
$$
\gamma=\tfrac{1}{\sqrt2}\Haus^1\mres \bigcup_{x\in[\frac12,1]} (0,x,0, x-1) + \tfrac{1}{\sqrt2}\Haus^1\mres \bigcup_{x\in[\frac12,1]} (0,x-1, 0, x).
$$
It is straightforward to see that $\gamma_n\rightharpoonup\gamma$ (due to uniformly bounded moments, this is even true in Wasserstein).
From this definition, we derive that the intermediate measures $\mu_{t,n}$ are given by
$$
\mu_{t,n}=\Haus^1\mres \bigcup_{x\in[\frac12,1]}(\tfrac{t}{n},x-t)+\Haus^1\mres \bigcup_{x\in[\frac12,1]}(-\tfrac{t}{n},x-1+t)
$$
and velocity fields minimizing $\mathcal L(\cdot|\gamma_n)$ given by
$$
v_{t,n}(x)=\begin{cases}
(\tfrac1n,-1),&\text{for $x=(\tfrac{t}{n},z)$ if $z+t\in[\tfrac12,1]$},\\
(-\tfrac1n,1),&\text{for $x=(-\tfrac{t}{n},z)$ for some $z+1-t\in[\tfrac12,1]$}.
\end{cases}
$$
(This form of $v_t$ can be derived by the representation of the velocity as conditional expectation, see, e.g., \cite{LCL2023}.)
One easily finds that for this velocity we get $\mathcal L(v_{t,n}|\gamma_n)=0$.
For the limit plan $\gamma$ we obtain (also by the representation of the velocity as conditional expectation) that 
$$
v_{t}(x)=\begin{cases}
(0,-1),&\text{for $x=(0,z)$ if $z+t\in[\tfrac12,1]$ and $z+1-t\not\in[\tfrac12,1]$},\\
(0,1),&\text{for $x=(0,z)$ if $z+1-t\in[\tfrac12,1]$ and $z+t\not\in[\tfrac12,1]$},\\
0,&\text{else.}
\end{cases}
$$
Inserting this form into the loss function yields $\mathcal L(v_t|\gamma)>0$.
We observe that all velocity fields have a potential (which can be extended to $\R^d$), so that the same result holds for the rectified coupling with gradient constraint.

\section{Proof of Proposition~\ref{prop:limits_coincide}}\label{app:limits_coincide}

Let $(\gamma_n)_n$ and $(\gamma_n^{(N)})_n$ be generated by Algorithm~\ref{alg:minibatch_OT_rec} and let $(n_k)_k$ be a subsequence such that $\gamma_{n_k}\wto \gamma$. We show that any limit point of $(\gamma_{n_k}^{(N)})_k$ and $(\gamma_{n_k+1})_k$ coincides with $\gamma$.
Then, this implies $\gamma_{n_k}^{(N)}\wto \gamma$ and $\gamma_{n_k+1}\wto \gamma$ and by induction this yields also $\gamma_{n_k+l}\wto\gamma$ and $\gamma_{n_k+l}^{(N)}\wto\gamma$ for any $l\in\Z_{\geq0}$.

Due to weak compactness of $\Gamma(\mu_0,\mu_1)$ we can go over to another subsequence of $(n_k)_k$ (which we again denote by $(n_k)_k$) such that $\gamma_{n_{k}}^{(N)}\wto \gamma^{(N)}$ and $\gamma_{n_{k}+1}\wto\gamma'$ for some $\gamma^{(N)}$ (not necessarily $\mathcal{F}_N(\gamma)$) and $\gamma'$. We have to show that this implies $\gamma^{(N)}=\gamma'=\gamma$.

\subsection{Proof of $\gamma^{(N)}=\gamma'$}

We prove the more general statement formulated in the following lemma, which we prove in the rest of this subsection. Applying it to the sequences $\tilde \eta_k=\gamma^{(N)}_{n_k}$ and $\eta_k=\gamma_{n_k+1}$ yields $\gamma^{(N)}=\gamma'$.

\begin{lemma}\label{lem:gamma_N_equals_gamma_prime}
Assume that $\tilde \eta^n\in\Gamma(\mu_0,\mu_1)$ and $\eta^n\in\mathcal R(\tilde\eta^n)$ fulfill that $\int\|x-y\|^2\d \tilde\eta^n(x,y)\geq\int\|x-y\|^2\d \eta^n(x,y)\geq \int\|x-y\|^2\d \tilde\eta^{n+1}(x,y)$ for all $n$. Assume also that $\tilde\eta^n$ converges weakly to some $\tilde\eta\in\Gamma(\mu_0,\mu_1)$. 
Then, $\eta^n$ converges to $\tilde\eta$. Moreover, the minimizers $\tilde v_t^n$ and $\tilde v_t$ of the loss functions $\mathcal L(\cdot|\tilde\eta^n)$ and $\mathcal L(\cdot|\tilde\eta)$ fulfill $\tilde v_t^n\tilde\mu_t^n \otimes dt\wto \tilde v_t\tilde \mu_t \otimes dt$.
\end{lemma}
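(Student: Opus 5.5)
The plan has three steps: the decreasing costs force the flow matching loss and the curvature defect of the paths to vanish, both plans are then compared at an intermediate time $t$ through a common map-induced plan, and finally $t\to0$. Write $C(\eta)=\int\|x-y\|^2\d\eta$. The chain of inequalities in the hypothesis makes $C(\tilde\eta^n)$ and $C(\eta^n)$ interleaved non-increasing sequences, so both converge to the same $T$.

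\textbf{Step 1: losses and curvature vanish.} Let $\tilde v^n_t$ be the minimizer of $\mathcal L(\cdot|\tilde\eta^n)$. It is the conditional expectation of $y-x$ given $(1-t)x+ty$ under $\tilde\eta^n$, so the Pythagorean identity gives
\[
C(\tilde\eta^n)=\int_0^1\!\!\int\|\tilde v^n_t\|^2\d\tilde\mu^n_t\d t+\mathcal L(\tilde v^n_t|\tilde\eta^n).
\]
By the proof of Corollary~\ref{cor:properties_weak_rec}, for the path measure $\Lambda^n$ generating $\eta^n$ we also have
\[
\int_0^1\!\!\int\|\tilde v^n_t\|^2\d\tilde\mu^n_t\d t - C(\eta^n)=\int\!\!\int_0^1\|\dot\lambda(s)-(\lambda(1)-\lambda(0))\|^2\d s\d\Lambda^n(\lambda).
\]
Both right-hand sides are nonnegative and sum to $C(\tilde\eta^n)-C(\eta^n)\to0$. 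Hence $\mathcal L(\tilde v^n|\tilde\eta^n)\to0$ and the curvature defect of $\Lambda^n$ tends to zero.

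\textbf{Step 2: comparison at an intermediate time.} For fixed $t\in(0,1)$ define
\begin{itemize}
\item[(a)] $\tilde\eta^n_t$, the law of $((1-t)x+ty,\,y)$ under $\tilde\eta^n$;
\item[(b)] $\eta^n_t$, the law of $(\lambda(t),\lambda(1))$ under $\Lambda^n$;
\item[(c)] $\rho^n_t=(\mathrm{Id},\mathrm{Id}+(1-t)\tilde v^n_t)_\#\tilde\mu^n_t$.
\end{itemize}
By \eqref{eq:Lambda2} all three have first marginal $\tilde\mu^n_t$.

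For (a) versus (c), note that $y-x_t=(1-t)(y-x)$. Therefore $W_2^2(\tilde\eta^n_t,\rho^n_t)\le(1-t)^2\int\|\tilde v^n_t(x_t)-(y-x)\|^2\d\tilde\eta^n$, whose $t$-integral is at most $\mathcal L(\tilde v^n|\tilde\eta^n)\to0$.

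For (b) versus (c), write $\lambda(1)-\lambda(t)=\int_t^1\dot\lambda(s)\d s$ and use $\dot\lambda(t)=\tilde v^n_t(\lambda(t))$. Jensen's inequality gives
\[
\int_0^1 W_2^2(\eta^n_t,\rho^n_t)\d t\le 4\int\!\!\int_0^1\|\dot\lambda(s)-(\lambda(1)-\lambda(0))\|^2\d s\d\Lambda^n\to0 .
\]
Hence $\int_0^1W_2^2(\tilde\eta^n_t,\eta^n_t)\d t\to0$, and along a subsequence $W_2(\tilde\eta^n_t,\eta^n_t)\to0$ for a.e.\ $t$.

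\textbf{Step 3: letting $t\to0$.} On one side $W_2^2(\tilde\eta^n_t,\tilde\eta^n)\le t^2C(\tilde\eta^n)$. On the other, $W_2^2(\eta^n_t,\eta^n)\le\int\|\lambda(t)-\lambda(0)\|^2\d\Lambda^n\le t\int\!\!\int_0^1\|\dot\lambda\|^2\d\Lambda^n\le t\,C(\tilde\eta^n)$. Both bounds are uniform in $n$, since $C(\tilde\eta^n)\le C(\tilde\eta^0)$.

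Fix such a good small $t$. Then $\limsup_n W_2(\eta^n,\tilde\eta)\le O(\sqrt t)$, using $\tilde\eta^n\to\tilde\eta$ in $W_2$ (weak convergence with uniformly integrable second moments on $\Gamma(\mu_0,\mu_1)$). Letting $t\to0$ shows that every subsequential limit of $\eta^n$ equals $\tilde\eta$, so the whole sequence $\eta^n$ converges to $\tilde\eta$. I expect this step to be the main technical point: one has to handle the a.e.-$t$ subsequence extraction together with uniformity in $n$, which the bounds above are designed to provide.

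\textbf{Velocity convergence.} By the conditional expectation representation,
\[
\tilde v^n_t\tilde\mu^n_t\otimes\d t=(t,(1-t)x+ty)_\#\big((y-x)\,\tilde\eta^n\otimes\d t\big)
\]
as vector measures on $[0,1]\times\R^d$, and the same identity holds for $\tilde\eta$. Test against a bounded continuous $\psi(t,z)$ and apply $\tilde\eta^n\to\tilde\eta$. The integrand $\langle\psi(t,(1-t)x+ty),y-x\rangle$ has linear growth, and second moments are uniformly integrable, so the test integrals converge. This gives $\tilde v^n_t\tilde\mu^n_t\otimes\d t\wto\tilde v_t\tilde\mu_t\otimes\d t$.
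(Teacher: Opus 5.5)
Your proof is correct. It starts from the same energy identity as the paper (your Step 1 is exactly \eqref{eq:energycontrol}), but the comparison step is organized differently.

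The paper tests against a smooth $\psi$ and, for every $t$, reconstructs \emph{both} endpoints from position and velocity. It writes $\int\psi\,\d\tilde\eta^n\approx\int_0^1\int\psi(x_t-t\tilde v_t^n(x_t),x_t+(1-t)\tilde v_t^n(x_t))\,\d\tilde\mu_t^n\,\d t$, transfers this to $\Lambda^n$ through the common marginal $\tilde\mu_t^n$, and returns to $(\lambda(0),\lambda(1))$. That last step needs the additional straightness estimate \eqref{eq:controlstraightness} to control $\lambda(t)-((1-t)\lambda(0)+t\lambda(1))$. Averaging over $t\in[0,1]$ then gives $\int\psi\,\d\tilde\eta^n-\int\psi\,\d\eta^n\to0$ directly, with no limit in $t$.

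You instead reconstruct only the forward endpoint. You compare the shifted plans $\tilde\eta^n_t$ and $\eta^n_t$ in $W_2$ through the map-induced plan $\rho^n_t$, then undo the shift by letting $t\to0$ with the $n$-uniform bounds $t\sqrt{C(\tilde\eta^0)}$ and $\sqrt{t\,C(\tilde\eta^0)}$. This avoids \eqref{eq:controlstraightness} and gives the quantitative statement $W_2(\eta^n,\tilde\eta^n)\to0$, at the cost of the extra limit in $t$.

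That limit needs no subsequence or a.e.-$t$ selection, so the step you flag as delicate is easy. Square the triangle inequality and average over $t\in(0,\tau)$ to get $W_2^2(\eta^n,\tilde\eta^n)\le 3C(\tilde\eta^0)(\tau+\tau^2)+\frac{3}{\tau}\int_0^1W_2^2(\tilde\eta^n_t,\eta^n_t)\,\d t$, then let $n\to\infty$ and $\tau\to0$.

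For the velocity statement your argument is cleaner than the paper's. The paper extracts a limit of the momenta via \cite[Thm~5.4.4]{AGS2008} and identifies it using the vanishing loss. You observe that $\tilde v_t^n\tilde\mu_t^n\otimes\d t=(t,x_t)_\#\big((y-x)\tilde\eta^n\otimes\d t\big)$ holds exactly for every $n$ by first-order optimality. Convergence of the whole sequence therefore follows from $W_2$-convergence of $\tilde\eta^n$ alone, independently of the reflow structure and of the vanishing loss.
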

\begin{proof}
We denote by $\tilde\mu_t^n$ the interpolations \eqref{eq:def_mu_t} of $\tilde\eta^n$.
Then, the rectification $\eta^n$ of $\tilde\eta^n$ in Section~\ref{sec:smirnov} is built upon the sequence
of measures $\Lambda^n$ on $C^0([0,1];\R^d)$ given 
by~\cite[Thm~8.2.1]{AGS2008}  which satisfy,
for all $t$ and any test function $\psi$:
\begin{equation}\label{eq:Lambdat}
\int \psi(\lambda(t))\d\Lambda^n (\lambda)= \int\psi\d\tilde \mu^n_t
\end{equation}
and
\[
\dot{\lambda}(t) = \tilde v^n_t(\lambda(t))
\]
for a.e.~$t$ and $\Lambda^n$-a.e.~$\lambda$, where $\tilde v^n_t$ satisfies
the continuity equation:
\[ \partial_t \tilde \mu_t^n + \Div \tilde v^n_t \tilde \mu^n_t = 0.\]

We have that the Wasserstein cost is non-increasing, and in particular
the assumptions of the Lemma yield that:
\begin{equation}\label{eq:Wassequal}
\lim_n \int \|x-y\|^2\d \tilde\eta^n(x,y) - \int \|x-y\|^2 \d{\eta}^{n}(x,y) = 0.
\end{equation}
The coupling ${\eta}^{n}$ is defined from $\Lambda^n$ by
\[
\int \psi(x,y)\d{\eta}^{n}(x,y)= \int \psi(\lambda(0),\lambda(1))\d\Lambda^n(\lambda) 
\]
for any test function $\psi$.

Recall that by the first-order optimality condition of $v_t$ in the loss \eqref{eq:flow_matching_loss}, one has:
\[
\int \langle y-x, \tilde v^n_t((1-t)x+ty)\rangle \d \tilde\eta^n(x,y) =
\int \|\tilde v_t^n((1-t)x+ty)\|^2 \d \tilde\eta^n(x,y).
\]
Also, $\int_0^1\dot\lambda(t)\d t=\lambda(1)-\lambda(0)$ so that $\int_0^1\langle \dot\lambda(t),\lambda(1)-\lambda(0)\rangle \d t=\|\lambda(1)-\lambda(0)\|^2$. Together with \eqref{eq:Lambdat} this implies
{\small\begin{equation}\label{eq:energycontrol}
\begin{aligned}
&\int\|x-y\|^2\d\tilde\eta^n(x,y)-\int\|x-y\|^2\d\eta^n(x,y)\\
&=\int_0^1\int\|x-y\|^2-\|\tilde v_t^n((1-t)x+ty)\|^2\d\tilde\eta^n\d t\\
&\ \hspace{4.5cm} +\int_0^1\int\|\tilde v_t^n(x_t)\|^2 \d\tilde\mu_t^n \d t -
\int \|\lambda(1)-\lambda(0)\|^2\d\Lambda^n(\lambda)\\
&=\int_0^1\int\|y-x-\tilde v_t^n((1-t)x+ty)\|^2\d\tilde\eta^n\d t +\int_0^1\int \|\dot\lambda(t)\|^2-\|\lambda(1)-\lambda(0)\|^2\d\Lambda^n(\lambda)\d t\\
&=\int_0^1\int\|y-x-\tilde v_t^n((1-t)x+ty)\|^2\d\tilde\eta^n\d t+\int_0^1\int \|\dot\lambda(t)-(\lambda(1)-\lambda(0))\|^2\d\Lambda^n(\lambda)\d t.
\end{aligned}
\end{equation}
}
\begin{remark}
In Section~\ref{sec:gradient}, the velocity field minimizing \eqref{eq:flow_matching_loss} is replaced by a projected version $v_t^{p,n}$. Then, we have $\int \|\tilde v_t^n\|^2\d\tilde \mu^n_t=\int \|\tilde v_t^n-\tilde v_t^{p,n}\|^2\d\tilde \mu^n_t + \int \|\tilde v_t^{p,n}\|^2\d\tilde\mu^n_t$ so that the same holds with an additional positive term on the right-hand side.
The first term in the right-hand side of~\eqref{eq:energycontrol} controls how far the coupling $\tilde\eta^n$ is from a deterministic coupling, while the second controls how straight the new coupling is.
\end{remark}
Note that for any rectifiable path $\lambda$,
\begin{multline*}
\|\lambda(t) - ((1-t)\lambda(0) + t\lambda(1))\|^2 = 
\| \lambda(t)-\lambda(0) - t(\lambda(1)-\lambda(0))\|^2
\\= \left\|\int_0^t\big(\dot\lambda(s) - (\lambda(1)-\lambda(0))\big)\d s\right\|^2
\le t\int_0^t \|\dot\lambda(s)- (\lambda(1)-\lambda(0))\|^2\d s
\end{multline*}
so that
\begin{multline*}
\int_0^1 \|\lambda(t) - ((1-t)\lambda(0) + t\lambda(1))\|^2 \d t
\le \int_0^1  \tfrac{1-s^2}{2}\|\dot\lambda(s)- (\lambda(1)-\lambda(0))\|^2\d s
\\ \le \frac{1}{2}
\int_0^1  \|\dot\lambda(s)- (\lambda(1)-\lambda(0))\|^2\d s.
\end{multline*}
We obtain that in addition to~\eqref{eq:energycontrol}, one also
has
\begin{equation}\label{eq:controlstraightness}
    \int \|x-y\|^2 \d\tilde\eta^n  - \int \|x-y\|^2 \d\eta^n
\ge  2
 \int_0^1 \int \|\lambda(t) - ((1-t)\lambda(0) + t\lambda(1))\|^2\d\Lambda^n(\lambda) \d t,    
\end{equation}
where the left side goes to zero by \eqref{eq:Wassequal}.
\begin{remark} Since $\psi\mapsto \int \psi(\lambda(t),(1-t)\lambda(0)+t\lambda(1)) \d\Lambda^n$ is a coupling between the interpolations $\tilde \mu_t^n$ and $\mu_t^{n}$ from \eqref{eq:def_mu_t} wrt $\tilde\eta^n$ and $\eta^n$, we already find that
\[
2\int_0^1 W_2^2(\tilde\mu_t^n,\mu_t^{n}) \d t\leq \int \|x-y\|^2 \d\tilde\eta^n  - \int \|x-y\|^2 \d\eta^{n}
\to 0.
\]
However we will prove the stronger result that the limit points of $\tilde\eta^n$ and
$\eta^n$ must coincide.
\end{remark}

Since $\tilde\eta^n\wto\tilde\eta$, we know that also $\tilde \mu_t^n$ converges to the $\tilde\mu_t$ corresponding to $\tilde\eta$ for all $t$,
and $\tilde v^{n}_t\tilde\mu^{n}_t\wto \tilde v_t\tilde\mu_t$ which satisfies
the continuity equation, cf.~\cite[Thm~5.4.4]{AGS2008}.

Now, assume $\eta^{n}$ also converges to a weak limit $\eta$ (which is always true up to subsequences).
Let $\psi(x,y)$ be a smooth test function with compact support. Then,
\begin{equation}\label{eq:limgamma}
\begin{aligned}
\int &\psi(x,y)  \d \tilde\eta^n(x,y)=   \int_0^1 \int \psi(x,y)\d \tilde\eta^n(x,y) \d t\\ 
=& \int_0^1\int \psi(x_t-t\tilde v_t^n(x_t),x_t+(1-t)\tilde v_t^{n}(x_t)) \d \tilde\mu_t^{n}(x_t)\d t
\\ &+ \int_0^1\int \big(\psi(x,y)-\psi(x_t-t \tilde v_t^n(x_t),x_t+(1-t)\tilde v_t^n(x_t))\big)\d\tilde\eta^n(x,y)\d t 
\end{aligned}
\end{equation}
(where in the last line, ``$x_t$'' is not a variable but
a shorthand for $(1-t)x+ty$).
Since
\begin{align*}
    \psi(x,y)-\psi(x_t-t \tilde v_t^n(x_t),x_t+ &(1-t) \tilde  v_t^n  (x_t))
    \le \|D\psi\|_\infty \left\| 
    \begin{pmatrix}
    x_t-t \tilde v_t^n(x_t) -x\\ x_t+(1-t)\tilde v_t^n(x_t) - y
    \end{pmatrix}
    \right\| \\
    & \le  \|D\psi\|_\infty \sqrt{t^2+(1-t)^2}\|\tilde v_t^n(x_t)-(y-x)\|,
\end{align*}
the last error term goes to zero thanks to~\eqref{eq:energycontrol}.
Then,
\begin{multline*}
\int_0^1\int \psi(x_t- t\tilde v_t^n(x_t),x_t+(1-t)\tilde v_t^{n}(x_t)) \d \tilde \mu_t^{n}(x_t)\d t
\\= \int_0^1 \int\psi(\lambda(t)- t \dot\lambda(t),\lambda(t)+(1-t)\dot\lambda(t))\d\Lambda^n(\lambda)\d t
\end{multline*}
and we use now that
\begin{multline*}
    |\psi(\lambda(t)-t\dot\lambda(t),\lambda(t)+(1-t)\dot\lambda(t))
     - \psi(\lambda(0),\lambda(1))|
     \\\le \|D\psi\|_\infty  \left( 2\|\lambda(t)-((1-t)\lambda(0)+t\lambda(1))\| +\sqrt{t^2+(1-t)^2} \|\dot\lambda(t)-(\lambda(1)-\lambda(0))\|\right)
\end{multline*}
to show, using again~\eqref{eq:energycontrol} and~\eqref{eq:controlstraightness} that
\begin{align*}
\int_0^1\int \psi(x_t-t\tilde v_t^n(x_t),x_t+(1-t)\tilde v_t^{n}(x_t)) &\d \tilde \mu_t^{n}(x_t)\d t
\\ &= \int_0^1 \int \psi(\lambda(0),\lambda(1))\d\Lambda^n(\lambda)  \d t+ o(1)
\\ &=\int \psi(x,y) \d\eta^n(x,y)+ o(1).
\end{align*}
We find in the limit (still, a priori, along a subsequence)
in~\eqref{eq:limgamma} that:
\[
\int \psi(x,y)\d\tilde\eta(x,y)
=\int \psi(x,y)\d\eta (x,y)
\]
which proves that $\eta^n\wto\tilde\eta$.
One also deduces from~\eqref{eq:energycontrol}
(precisely, from the fact that 
$\int_0^1\int\|y-x-\tilde v_t^n((1-t)x+ty)\|^2\d\tilde\eta^n\d t $
goes to zero) that:
\[
\int_0^1\int \psi(x_t,t) \tilde v_t(x_t)\d\tilde \mu_t(x_t)\d t
= \int_0^1\int \psi((1-t)x+ty,t) (y-x) \d\tilde \eta(x,y)\d t,
\]
for any test function $\psi$, which shows that $\tilde v_t=\argmin\mathcal{L}(\cdot|\tilde\eta)$.
\end{proof}

\subsection{Proof of $\gamma^{(N)}=\gamma$}

Let $\Sigma\subset \R^{N\times N}$ be the convex set of doubly stochastic matrices:
\[
  \Sigma := \left\{ X\in \R^{N\times N}: X\ge 0, X\ones = \ones, X^T\ones=\ones\right\}
\]
where ``$X\ge 0$'' is intended componentwise ($X_{i,j}\ge 0\,\forall i,j$) and
$\ones=(1,\dots,1)^T\in\R^N$ is the $N$-vector with all components equal to $1$. It
is well known that equivalently, $\Sigma$ can be defined as the convex envelope
of the permutation matrices $(\delta_{\sigma(i),j})_{i,j=1}^N$, $\sigma\in\Sigma_N$.

The next lemma shows that $\gamma^{(N)}$ is ``some version of minibatch OT'' applied to $\gamma$. However, since the discrete OT mapping within minibatch OT is not unique, we cannot conclude in general that $\gamma^{(N)}=\mathcal F_N(\gamma)$.

\begin{lemma}\label{lem:X-ists}
Let $(\eta_n)_n$ be a sequence in $\Gamma(\mu_0,\mu_1)$, denote by $\eta_n^{(N)}\coloneqq\mathcal F_N(\eta_n)$ and assume that $\eta_n\wto\eta$ and $\eta_n^{(N)}\wto \eta^{(N)}$ for some $\eta$ and $\eta^{(N)}$. Then, there exists a Borel map $X\colon(\R^d\times\R^d)^N\to\Sigma$ such that 
\[
\int\psi(x,y)\d\eta^{(N)}(x,y)=\frac1N\int \sum_{i,j=1}^N \psi(x^{(i)},y^{(j)})X_{i,j}(\Bx,\By)\d\eta^{\otimes N}(\Bx,\By).
\]
\end{lemma}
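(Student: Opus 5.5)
We lift everything to the product space $(\R^d\times\R^d)^N$ and pass to the limit in a joint measure that records both the batch and the permutation chosen by $S$. For each $n$, define the measure $P_n$ on $(\R^d\times\R^d)^N\times\Sigma$ by $P_n\coloneqq(\mathrm{Id},X^S)_\#\eta_n^{\otimes N}$. Here $X^S(\Bx,\By)$ is the permutation matrix $(\delta_{\sigma(i),j})_{i,j}$ of the permutation $\sigma$ selected by $S$ at $(\Bx,\By)$; this map is Borel because $S$ is piecewise defined on finitely many Borel sets. By definition of $\mathcal F_N$,
\[
\int\psi\,\d\eta_n^{(N)}=\frac1N\int\sum_{i,j=1}^N\psi(x^{(i)},y^{(j)})X_{i,j}\,\d P_n(\Bx,\By,X).
\]

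Since $\eta_n\wto\eta$, we have $\eta_n^{\otimes N}\wto\eta^{\otimes N}$. The factor $\Sigma$ is compact, so $(P_n)_n$ is tight, and we extract a subsequence with $P_n\wto P$. The first marginal of $P$ is $\eta^{\otimes N}$. For $\psi\in C_c(\R^d\times\R^d)$, the integrand $(\Bx,\By,X)\mapsto\frac1N\sum_{i,j}\psi(x^{(i)},y^{(j)})X_{i,j}$ is bounded and continuous. We can therefore pass to the limit and obtain
\[
\int\psi\,\d\eta^{(N)}=\frac1N\int\sum_{i,j}\psi(x^{(i)},y^{(j)})X_{i,j}\,\d P.
\]

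Next we disintegrate $P$ with respect to its first marginal, $P=\int P_{(\Bx,\By)}\,\d\eta^{\otimes N}(\Bx,\By)$, where $(\Bx,\By)\mapsto P_{(\Bx,\By)}\in\mathcal P(\Sigma)$ is a Borel family. We then set $X(\Bx,\By)\coloneqq\int_\Sigma X'\,\d P_{(\Bx,\By)}(X')$. This barycenter lies in $\Sigma$ because $\Sigma$ is convex and closed. It is Borel, since it is the integral of a continuous bounded function against a measurable kernel. The integrand is linear in $X$, so substituting the barycenter gives exactly the claimed formula.

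The main technical point is the measurability of $X^S$ and the existence of the Borel disintegration, which is standard on Polish spaces. Everything else is a routine weak-limit argument; the linearity in $X$ is what lets the averaged matrix replace the limiting measure. Optionally, one could also record that $P$ is concentrated on optimal doubly stochastic matrices, which follows from the closedness of the optimality condition. That refinement is not needed for the stated lemma.
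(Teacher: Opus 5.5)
Your proof is correct, but it reaches $X$ by a different mechanism than the paper.

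The paper works directly with the matrix-valued measures $\lambda_n = X_n\eta_n^{\otimes N}$ on $(\R^d\times\R^d)^N$. From $|\lambda_n|\le\eta_n^{\otimes N}$, a limit point satisfies $|\lambda|\le\eta^{\otimes N}$. Radon--Nikodym then gives $\lambda = X\eta^{\otimes N}$, and the constraints $X\ge 0$, $X\ones=\ones$, $X^T\ones=\ones$ are checked a.e.\ by testing. You instead lift to the Young-measure-type objects $P_n=(\mathrm{Id},X^S)_\#\eta_n^{\otimes N}$ on $(\R^d\times\R^d)^N\times\Sigma$, take a narrow limit, disintegrate, and define $X$ as the barycenter of the fibre measures. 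The two $X$'s coincide: the barycenter of $P_{(\Bx,\By)}$ is exactly the density, with respect to $\eta^{\otimes N}$, of the limit of $X_n\eta_n^{\otimes N}$.

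What each route buys:
\begin{itemize}
\item The paper's route is more economical, since it needs only Radon--Nikodym and no disintegration theorem.
\item The paper leaves implicit that the convergence of $\lambda_n$ must be narrow rather than vague, because $\psi(x^{(i)},y^{(j)})$ is not compactly supported on the product space. In your route this is handled cleanly: tightness is free from the compactness of $\Sigma$, the integrand is bounded and continuous, and $X\in\Sigma$ follows automatically from convexity.
\item Your lifting keeps the whole limiting distribution of permutation matrices. By Portmanteau, $P$ is concentrated on the closed set where $\sum_{i,j}\|x^{(i)}-y^{(j)}\|^2X_{i,j}$ equals the minimum over permutations. This yields, pointwise and by linearity, the remark after the lemma that $X$ minimizes the transport cost.
\end{itemize}

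On measurability: your claim that $X^S$ is Borel is really an assumption on the chosen selection $S$. The paper makes the same assumption implicitly, since it is needed for $\mathcal F_N$ to be defined. When some $y^{(j)}$ coincide, you should fix a Borel tie-breaking rule among the permutations compatible with $S$.
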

\begin{proof}
We define for $\eta_n$ the Borel map $X_n\colon(\R^d\times\R^d)^N\to\Sigma$ by $(X_n)_{i,j}(\Bx,\By)=1$ if $j=\sigma(i)$ for the permutation $\sigma$ used in the discrete OT operator $S$ within the minibatch OT. Then, we have by definition that
\begin{equation}\label{eq:minibatch_OT_in_X_notation}
\int\psi(x,y)\d\eta_n^{(N)}(x,y)=\frac{1}{N}\int \sum_{i,j=1}^N\psi(x^{(i)},y^{(j)})(X_n)_{i,j}(\Bx,\By)\d\eta_n^{\otimes N}(\Bx,\By).
\end{equation}
Now let $\lambda$ be a limit point of $\lambda_n\coloneqq X_n\eta_n^{\otimes N}$ (which exists due to compactness arguments).
Then, one has for any non-negative continuous test function $\psi$ that
 \[
   \left\|\int \psi(\Bx,\By)\d \lambda_n(\Bx,\By)\right\|
   \le \int \psi(\Bx,\By)  \d\eta^{\otimes N}_n(\Bx,\By)
   \to \int\psi(\Bx,\By) \d\eta^{\otimes N}(\Bx,\By)   
 \]
 from which we deduce that $|\lambda| \le \eta^{\otimes N}$ and in particular
 $\lambda \pp \eta^{\otimes N}$
 as Radon measures. Hence, one has $\lambda = X\eta^{\otimes N}$ for
 some $X\in L^1_{\text{loc}}(\eta^{\otimes N}; \R^{N\times N})$. In addition
 (by convexity, or integrating against functions of the form $\varphi(x,y)\ones$)
 one checks easily that $X(\Bx,\By)\in \Sigma$ almost everywhere.
 
Finally, we obtain from \eqref{eq:minibatch_OT_in_X_notation} that for the subsequence $n_k$ such that $X_{n_k}\eta_{n_k}^{\otimes N}\wto X\eta^{\otimes N}$ it holds
\begin{align*}
\int \psi(x,y)\d\eta^{(N)}(x,y)&=\lim_{k\to\infty}\int \psi(x,y)\d\eta_{n_k}^{(N)}(x,y)\\
&=\lim_{k\to\infty}\frac1N\int\sum_{i,j=1}^N \psi(x^{(i)},y^{(j)})(X_{n_k})_{i,j}(\Bx,\By)\d\eta_{n_k}^{\otimes N}(\Bx,\By)\\
&=\frac1N\int\sum_{i,j=1}^N \psi(x^{(i)},y^{(j)})X_{i,j}(\Bx,\By)\d \eta^{\otimes N}(\Bx,\By).
\end{align*}
\end{proof}
\begin{remark}
    One easily shows that $X$ is minimizing the Wasserstein cost, that is,
    \[
    \frac1N\int\sum_{i,j=1}^N \|x^{(i)}-y^{(j)}\|^2 X_{i,j}(\Bx,\By)\d \eta^{\otimes N}(\Bx,\By)
    = \int \|x-y\|^2 d\eta^{(N)},  
    \]
    so that in particular $X(\Bx,\By)=\mathrm{Id}$ at $\eta^{\otimes N}$-almost all points of $(\R^d\times\R^d)^N$ where
    the minimizer of the cost is unique.
\end{remark}
Since we have by Lemma~\ref{lem:gamma_N_equals_gamma_prime} that $\gamma^{(N)}=\gamma'$ and by Corollary~\ref{cor:limit_points_minibatch_reflow} that $\gamma'$ is monotone, we obtain that $\gamma^{(N)}$ is supported on a graph of a Borel map.
Under this additional condition, we finally obtain that $\gamma^{(N)}=\gamma$.
We formalize this observation in the following lemma. Choosing the sequence $(\eta_n)_n$ as $(\gamma_{n_{k}})_k$ yields the claim.

\begin{lemma}
    Let $(\eta_n)_n$ be a sequence in $\Gamma(\mu_0,\mu_1)$, denote by $\eta_n^{(N)}\coloneqq\mathcal F_N(\eta_n)$ and assume that $\eta_n\wto\eta$ and $\eta_n^{(N)}\wto \eta^{(N)}$ for some $\eta$ and $\eta^{(N)}$. Further assume that there exists a (Borel) transport map $u(x)$ such
    that $\eta^{(N)} = (I,u)_\sharp \mu_0$ and that $\mu_1$ 
    has no atoms. Then we have $\eta=\eta^{(N)}$ (and $u$ is $N$-monotone).
\end{lemma}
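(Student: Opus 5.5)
The plan is to exploit the representation of $\eta^{(N)}$ from Lemma~\ref{lem:X-ists} together with the fact that $\eta^{(N)}$ is concentrated on the Borel set $G\coloneqq \Gr u=\{(x,u(x)):x\in\R^d\}$. This set is Borel as the graph of a Borel map. The point is that off-diagonal pairs $(x^{(i)},y^{(j)})$, $i\neq j$, are independent under $\eta^{\otimes N}$, so they essentially never land on a graph. Hence all the mass of $\eta^{(N)}$ on $G$ must come from the diagonal pairs, which forces $\eta(G)=1$.

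Concretely, I would first apply Lemma~\ref{lem:X-ists} to obtain a Borel $X\colon(\R^d\times\R^d)^N\to\Sigma$ with
\[
\int\psi\,\d\eta^{(N)}=\frac1N\int\sum_{i,j=1}^N\psi(x^{(i)},y^{(j)})X_{i,j}(\Bx,\By)\,\d\eta^{\otimes N}(\Bx,\By).
\]
By monotone approximation this identity extends from test functions to bounded Borel functions, in particular to $\psi=\mathbf 1_G$. Since $\eta^{(N)}=(I,u)_\#\mu_0$, the left-hand side equals $1$.

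Next I would treat the off-diagonal terms. For $i\ne j$, the pair $(x^{(i)},y^{(j)})$ under $\eta^{\otimes N}$ has law $\mu_0\otimes\mu_1$, because it is built from distinct independent factors. By Fubini,
\[
(\mu_0\otimes\mu_1)(G)=\int\mu_1(\{u(x)\})\,\d\mu_0(x)=0,
\]
since $\mu_1$ has no atoms. So these terms vanish, and using $X_{i,i}\le1$ we get
\[
1=\frac1N\sum_{i=1}^N\int X_{i,i}\mathbf 1_G(x^{(i)},y^{(i)})\,\d\eta^{\otimes N}\le\frac1N\sum_{i=1}^N\eta(G)=\eta(G).
\]
Hence $\eta(G)=1$. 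Since $\eta\in\Gamma(\mu_0,\mu_1)$ is concentrated on the graph of $u$ with first marginal $\mu_0$, it follows that $\eta=(I,u)_\#\mu_0=\eta^{(N)}$. As a by-product, equality in the last display forces $X_{i,i}=1$, i.e. $X=\mathrm{Id}$, for $\eta^{\otimes N}$-a.e.\ point.

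For the parenthetical claim that $u$ is $N$-monotone, I would use the remark after Lemma~\ref{lem:X-ists}, which says that $X$ minimizes the cost. Combined with $X=\mathrm{Id}$ a.e., this shows that $\int\|x-y\|^2\d\eta^{(N)}=\int\|x-y\|^2\d\eta$ is the minimal batch cost. By the argument of Proposition~\ref{prop:minibatchOT}(iii), the identity permutation is then optimal for $\eta^{\otimes N}$-a.e.\ batch. Continuity of the finitely many permutation costs then transfers this to all batches in $\mathrm{supp}(\eta)^N$.

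The main obstacle I expect is this last step: making rigorous that the remark's cost-minimality of $X$ holds. This requires passing to the limit in the costs using uniform integrability of second moments in $\Gamma(\mu_0,\mu_1)$ and the continuity in Proposition~\ref{prop:minibatchOT}(iv). A secondary care point is the extension of the representation to the indicator $\mathbf 1_G$, which is a routine monotone-class argument.
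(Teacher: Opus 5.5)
Your proposal is correct and follows essentially the same route as the paper: the representation from Lemma~\ref{lem:X-ists}, Borel measurability of $G=\Gr u$, and the observation that for $i\neq j$ the pair $(x^{(i)},y^{(j)})$ has law $\mu_0\otimes\mu_1$ under $\eta^{\otimes N}$, which gives zero mass to the graph of a Borel map because $\mu_1$ has no atoms. The differences are cosmetic. You conclude $\eta(G)=1$ directly and get $X=\mathrm{Id}$ only as a by-product, whereas the paper first shows $X=\mathrm{Id}$ a.e.\ and then reads off $\eta^{(N)}=\eta$; you also spell out the $N$-monotonicity via Proposition~\ref{prop:minibatchOT}(iii)--(iv), which the paper only asserts.
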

\begin{proof}
By Lemma~\ref{lem:X-ists} there exists a $\Sigma$-valued map $X(\Bx,\By)$ such that for any
    continuous $\psi$ with compact support,
    \begin{align*}
\int_{\R^d}\psi(x,u(x))\d\mu_0(x) &= \int_{\R^d\times\R^d} \psi(x,y)\d\eta^{(N)}(x,y)\\
&= \frac{1}{N} \int_{(\R^d\times \R^d)^N} 
 \left\langle (\psi(x^{(i)},y^{(j)}))_{i,j},
     X(\Bx,\By)\right\rangle \d\eta^{\otimes N}(\Bx,\By).
    \end{align*}

     Letting $G= \{(x,u(x)): x\in\R^d\}$, which is Borel as the graph of a Borel map (see~\cite[Prop~12.4]{Kechris1995}), we see that $\eta^{(N)} \mres (\R^d\times\R^d)\setminus G = 0$.
Hence, if $E=(\R^d\times\R^d)\setminus G$, (using the regularity of the measures)
\[
0 =  \int_{(\R^d\times \R^d)^N} 
 \sum_{i,j}\chi_E(x^{(i)},y^{(j)})
     X_{i,j}(\Bx,\By) \d\eta^{\otimes N}(\Bx,\By)
\]
 which shows in particular that for $\eta^{\otimes N}$-a.e.~$(\Bx,\By)$, if $X_{i,j}(\Bx,\By)>0$ then $(x^{(i)},y^{(j)})\in G$, that is, $y^{(j)}=u(x^{(i)})$. 
 If $E_{i,j} = \{ (\Bx,\By) \in (\R^d\times \R^d)^N: X_{i,j}(\Bx,\By)>0\}$
 and $i\neq j$,
 one sees then that
 \begin{multline*}
 \eta^{\otimes N}(E_{i,j}) \le \int_{(\R^d\times\R^d)^2} \chi_{\{ y^{(j)} = u(x^{(i)})\}}
 \d\eta(x^{(i)},y^{(i)})\d\eta(x^{(j)},y^{(j)})
  \\= \int_{\R^d}\left(\int_{\{u(x^{(i)})\}} \d\mu_1(y^{(j)}) \right)\d\mu_0(x^{(i)})=0,
 \end{multline*}
 since we assumed $\mu_1$ has no atoms. We deduce
 that for $\eta^{\otimes N}$-a.e.~$(\Bx,\By)$,
  $X(\Bx,\By)=\mathrm{Id}$ so that $\eta^{(N)}=\eta$. As a consequence, $u$ is $N$-monotone.
\end{proof}

\section{Non-Optimality of Minibatch-OT Reflow for Disconnected Supports}\label{app:wrong_with_disconnected_support}

The following lemma shows that the example from Section~\ref{sec:numerics} is $N$-cyclically monotone.

\begin{lemma}\label{lem:drehdich_ist_monoton}
    Let $M=2N+1$ for some $N\in\Z_{>1}$ and let $\gamma$ be defined as above for $\epsilon$ small enough. Then $\gamma$ is $N$-cyclically monotone.
\end{lemma}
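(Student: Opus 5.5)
The plan is to first prove the statement for the limiting discrete configuration ($\epsilon=0$) with a quantitative margin, and then pass to small $\epsilon>0$ by a perturbation argument.

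\emph{Reduction.} Every permutation $\tau\in\Sigma_N$ splits into disjoint cycles. Both sides of the $N$-cyclical monotonicity inequality are sums over the points, so they split along these cycles. It therefore suffices to treat one cycle of length $L\le N$. Relabel it as $i_1\to i_2\to\dots\to i_L\to i_1$.

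Points of $\mathrm{supp}(\gamma)$ have the form $(x_k+a,\;x_{k+1}+a)$ with $|a|\le\epsilon$, and indices are taken mod $M$. Write $k_j$ and $a_j$ for the blob index and offset of the $j$-th point of the cycle. The cost of pairing the $j$-th source with the $(j+1)$-th target is then
\[
\|x_{k_j}-x_{k_{j+1}+1}+a_j-a_{j+1}\|^2
= f(d_j)+2\langle x_{k_j}-x_{k_{j+1}+1},a_j-a_{j+1}\rangle+\|a_j-a_{j+1}\|^2 .
\]
Here $d_j\equiv k_{j+1}+1-k_j \pmod M$, with representative $d_j\in\{-N,\dots,N\}$, and $f(d)=4\sin^2(\pi d/M)$. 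For the identity pairing every term equals $f(1)$ exactly, so the identity cost of the cycle is $Lf(1)$.

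\emph{Discrete inequality.} The key combinatorial fact is $\sum_j d_j\equiv L \pmod M$, since the differences $k_{j+1}-k_j$ telescope around the cycle. Next I would establish the elementary bound
\[
f(d)\ge f(1)\,|d|\qquad\text{for } |d|\le N,
\]
with strict inequality for $2\le|d|\le N$. The proof idea is that $\sin^2(\pi s/M)/s$ on $[1,N]$ is minimized at an endpoint, because $\sin^2$ is convex and then concave on $[0,\pi/2]$. At $s=N$ one checks $\sin^2(\pi N/M)/N\ge \sin^2(\pi/M)$ for $M=2N+1$. Together these give
\[
\sum_j f(d_j)\ge f(1)\Big|\sum_j d_j\Big| .
\]
If $\sum_j d_j=L$, this yields $\sum_j f(d_j)\ge Lf(1)$. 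Otherwise $\big|\sum_j d_j\big|\ge M-L\ge N+1>L$, and the same conclusion holds strictly.

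Equality $\sum_j f(d_j)=Lf(1)$ forces every $d_j\in\{0,\pm1\}$, all $d_j$ of the same sign, and $\sum_j d_j=L$. This is only possible if $d_j=1$ for all $j$, which means all points of the cycle lie in one common blob $k$. I expect this inequality, and especially the equality characterization, to be the main obstacle. The endpoint check at $d=N$ is exactly where $M=2N+1$ enters, and it is why the argument fails for $N+1$.

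\emph{Perturbation.} The configurations $(k_j)_j$ and $\tau$ range over a finite set. In the non-equality case there is therefore a uniform margin $\delta>0$ with $\sum_j f(d_j)\ge Lf(1)+\delta$. The linear correction terms are $O(\epsilon)$ and the quadratic ones are nonnegative, so choosing $\epsilon$ small enough handles all these cases at once.

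In the equality case all points lie in the same blob $k$. The linear terms then sum to
\[
2\Big\langle x_k-x_{k+1},\sum_j (a_j-a_{j+1})\Big\rangle=0,
\]
because the differences telescope around the cycle. The remaining quadratic terms are nonnegative, so the cost of the cycle is at least $Lf(1)$, which is the identity cost. Hence $\gamma$ is $N$-cyclically monotone.
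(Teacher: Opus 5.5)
Your proof is correct and is essentially the paper's argument. The paper likewise combines the bound $\|x_0-x_m\|^2\ge m\|x_0-x_1\|^2$ (with a uniform margin $\rho$ for $m\ge 2$) with the congruence $\sum_i\sigma_i\equiv N \pmod{2N+1}$ to force all points into a single blob, where $u$ is a translation; the differences are only cosmetic (it proves the trigonometric bound via $\sin(\alpha\pi/2)\ge\alpha$ plus a direct check at $m=2$, works with the $N$-shift on possibly repeated points instead of a cycle decomposition, and fixes $\epsilon$ explicitly by $8N\epsilon<\rho$), though one correction to your side remark: the failure for $(N+1)$-cycles comes from the congruence step, since for $L=N+1$ one has $M-L=N<L$ so $\sum_j d_j=-N$ becomes admissible, and not from the endpoint check $f(N)\ge Nf(1)$, which remains valid.
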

\begin{proof}
We consider the points $x_k$ from \eqref{eq:drehdich}. Further, we extend the set of indices to $\Z$ by $x_{k+lM}=x_k$ for all $l\in \Z$.
We also use the periodic distance of indices defined as $d(k,l)=\min_{j\in \Z}|k-l+jM|$. Since $M=2N+1$ is odd, every difference $k-l$ of indices admits a unique representative $s\equiv k-l\pmod{M}$ with $s\in\{-N,...,N\}$, and it holds $d(k,l)=|s|$.

Denote by $\delta=\|x_0-x_1\|$ (with $x_k$ from \eqref{eq:drehdich}).
We first estimate $\|x_0-x_k\|^2$ for $k\in \{0, 1,...,N\}$. For $k=0$ and $k=1$ we directly obtain
that $\|x_0-x_k\|=k\delta$.
For the general case, we have
\begin{align*}
\|x_0-x_k\|^2&=\Big(1-\cos\Big(\frac{2\pi k}{M}\Big)\Big)^2+\sin\Big(\frac{2\pi k}{M}\Big)^2=4\sin\Big(\frac{\pi k}{M}\Big)^2.
\end{align*}
Using the inequality $\sin(\alpha\frac{\pi}{2})\geq \alpha$ for $\alpha\in[0,1]$ we obtain for $k\geq 3$
$$
\|x_0-x_k\|^2\geq\frac{16k^2}{M^2}\geq k\frac{4\pi^2}{M^2} > k\|x_0-x_1\|^2=k\delta^2,
$$
where the last inequality estimates the distance of $x_0$ and $x_1$ by the length of the circle arc between them.
For $k=2$, we observe that
$$
\|x_0-x_2\|^2-2\delta^2=4\sin(\frac{2\pi}{M})^2-8\sin(\frac{\pi}{M})^2=8\sin(\frac{\pi}{M})^2(2\cos(\frac{\pi}{M})^2-1).
$$
Since $M\geq 5$, we have that all terms are strictly positive and $\|x_0-x_2\|^2>2\delta^2$.
Denoting $\rho=\min\Big(\frac12\|x_0-x_2\|^2,\frac{4\pi^2}{M^2}\Big) - \|x_0-x_1\|^2>0$, we obtain
 that for $k\geq 2$, we have that $\|x_0-x_k\|^2 > k\delta^2+\rho$.
We now assume that $\epsilon$ is small enough such that $8N\epsilon<\rho$.

Now, we assume that there exist $(x^{(i)},y^{(i)})\in\mathrm{supp}(\gamma)$ for $i=1,...,N$ such that
\begin{equation}\label{eq:assumption_not_cyc_mon}
\sum_{i=1}^N \|x^{(i)}-y^{(i+1)}\|^2< \sum_{i=1}^N \|x^{(i)}-y^{(i)}\|^2.
\end{equation}
Then, it holds $y^{(i)}=u(x^{(i)})$ and using the notation $k(i)$ such that $\|x_{k(i)}-x^{(i)}\|\leq \epsilon$, we obtain that $\|x_{k(i)+1}-y^{(i)}\|\leq \epsilon$. We further use the notation $k(N+1)=k(1)$ and denote by $\sigma_i\in\{-N,...,N\}$ the unique representative of $k(i+1)+1-k(i)$ modulo $M$, so that $d(k(i),k(i+1)+1)=|\sigma_i|$. Since $\sum_{i=1}^N \big(k(i+1)+1-k(i)\big)=N$, this implies in particular that $\sum_{i=1}^N \sigma_i\equiv N \pmod{2N+1}$; note that every integer congruent to $N$ modulo $2N+1$ has absolute value at least $N$.
Moreover, since $\|x_{k(i)}-x_{k(i+1)+1}\|\leq\|x^{(i)}-y^{(i+1)}\|+2\epsilon$ and $\|x_{k(i)}-x_{k(i+1)+1}\|\leq 2$, we have
\begin{equation}\label{eq:center_perturbation}
\|x_{k(i)}-x_{k(i+1)+1}\|^2\leq \|x^{(i)}-y^{(i+1)}\|^2+4\epsilon\,\|x_{k(i)}-x_{k(i+1)+1}\|\leq \|x^{(i)}-y^{(i+1)}\|^2+8\epsilon,
\end{equation}
where the first inequality is trivial for $\|x_{k(i)}-x_{k(i+1)+1}\|\leq 2\epsilon$ and follows otherwise from $\|x^{(i)}-y^{(i+1)}\|^2\geq\big(\|x_{k(i)}-x_{k(i+1)+1}\|-2\epsilon\big)^2$.

\textbf{Step 1:} We first show that $d(k(i),k(i+1)+1)\leq 1$ for all $i=1,...,N$ (with the convention that $k(N+1)=k(1)$).
To this end, we assume by contradiction that there exists some $i$ such that $d(k(i),k(i+1)+1)\geq 2$. Then, we have
\begin{align*}
    \delta^2 \sum_{i=1}^N d(k(i),k(i+1)+1)
    &\leq \sum_{i=1}^N \|x_{0}-x_{d(k(i),k(i+1)+1)}\|^2 - \rho
    = \sum_{i=1}^N \|x_{k(i)}-x_{k(i+1)+1}\|^2 - \rho\\
    &\leq \sum_{i=1}^N \|x^{(i)}-y^{(i+1)}\|^2 +8N\epsilon - \rho\\
    &< \sum_{i=1}^N \|x^{(i)}-y^{(i)}\|^2 = N \delta^2,
\end{align*}
where the first inequality uses $\|x_0-x_m\|^2\geq m\delta^2$ for $m\in\{0,1\}$ together with $\|x_0-x_m\|^2> m\delta^2+\rho$ for $m\geq 2$, the second inequality uses \eqref{eq:center_perturbation}, and the last step uses $8N\epsilon<\rho$ and \eqref{eq:assumption_not_cyc_mon}.
Thus, we get that $\sum_{i=1}^N d(k(i),k(i+1)+1) < N$ and hence
$$
\Big|\sum_{i=1}^N \sigma_i\Big|\leq \sum_{i=1}^N |\sigma_i| = \sum_{i=1}^N d(k(i),k(i+1)+1)< N,
$$
which contradicts $\sum_{i=1}^N \sigma_i\equiv N \pmod{2N+1}$.

\textbf{Step 2:} Since we have $d(k(i),k(i+1)+1)=|\sigma_i|\leq 1$, we know that $\sum_{i=1}^N \sigma_i\in\{-N,...,N\}$. On the other side, we again know that $\sum_{i=1}^N \sigma_i\equiv N \pmod{2N+1}$ such that we obtain that $\sum_{i=1}^N \sigma_i=N$ and hence $\sigma_i=1$ for all $i$. By the definition of $\sigma_i$, this implies $k(i+1)\equiv k(i)\pmod M$ for all $i$. In particular, we know that there exists some $k=k(1)=\cdots=k(N)$ such that $x^{(i)}\in B_\epsilon(x_k)$. Since on $B_\epsilon(x_k)$, the map $u$ is only a translation (which is cyclically monotone), we obtain that
$$
\sum_{i=1}^N \|x^{(i)}-y^{(i+1)}\|^2 = \sum_{i=1}^N \|x^{(i)}-u(x^{(i+1)})\|^2\geq\sum_{i=1}^N \|x^{(i)}-u(x^{(i)})\|^2= \sum_{i=1}^N \|x^{(i)}-y^{(i)}\|^2,
$$
which contradicts \eqref{eq:assumption_not_cyc_mon} and we are done.
\end{proof}

\subsection*{Acknowledgments}

This research is supported  by the ANR (Agence Nationale de la Recherche), grant ANR-23-PEIA-0004 (PEPR IA, PDE-AI), and ANR-23-IACL-0008 (PR[AI]RIE-PSAI), funded by ``France 2030'' and by the German Research Foundation (DFG, Deutsche
Forschungsgemeinschaft) with project no 530824055.

\bibliographystyle{abbrv}
\bibliography{ref}

\end{document}